\newtheorem{thm}{Theorem}[section]
\newtheorem{prop}[thm]{Proposition}
\newtheorem{prob}[thm]{Problem}
\newtheorem{lem}[thm]{Lemma}
\newtheorem{example}[thm]{Example}
\newtheorem{false statement}{False statement}
\newtheorem{cor}[thm]{Corollary}
\theoremstyle{definition}
\newtheorem{defn}[thm]{Definition}
\newtheorem{claim}[thm]{Claim}
\newtheorem{conj}[thm]{Conjecture}
\makeatletter \@addtoreset{equation}{section}
\def\hh{\mathcal{H}}
\def\hl{\mathcal{L}}
\def\hht{\mathcal{T}}
\def\hf{\mathcal{F}}
\def\hg{\mathcal{G}}
\def\ha{\mathcal{A}}
\def\hb{\mathcal{B}}
\def\hs{\mathcal{S}}
\def\hc{\mathcal{C}}
\begin{document}
\title{\bf\Large On the largest degrees in intersecting hypergraphs}
\date{}
\author{Peter Frankl$^1$, Jian Wang$^2$\\[10pt]
$^{1}$R\'{e}nyi Institute, Budapest, Hungary\\[6pt]
$^{2}$Department of Mathematics\\
Taiyuan University of Technology\\
Taiyuan 030024, P. R. China\\[6pt]
E-mail:  $^1$frankl.peter@renyi.hu, $^2$wangjian01@tyut.edu.cn
}
\maketitle

\begin{abstract}
Let $\binom{[n]}{k}$ denote the collection of all $k$-subsets of the standard $n$-set $[n]=\{1,2,\ldots,n\}$. Let $n>2k$ and let $\mathcal{F}\subset \binom{[n]}{k}$ be an {\it intersecting} $k$-graph, i.e., $F\cap F'\neq \emptyset$ for all $F,F'\in \mathcal{F}$. The number of edges $F\in \mathcal{F}$ containing $x\in [n]$ is called the {\it degree} of $x$. Assume that $d_1\geq d_2\geq \ldots\geq d_n$ are the degrees of $\mathcal{F}$ in decreasing order. An important result of Huang and Zhao states that for $n>2k$ the minimum degree $d_n$ is at most $\binom{n-2}{k-2}$. For $n\geq 6k-9$ we strengthen this result by showing $d_{2k+1}\leq \binom{n-2}{k-2}$.  As to the second and third largest degrees we prove the best possible bound $d_3\leq d_2\leq \binom{n-2}{k-2}+\binom{n-3}{k-2}$ for $n>2k$. Several more best possible results of a similar nature are established.

\vspace{6pt}
{\noindent\bf AMS classification:} 05D05.

\vspace{6pt}
{\noindent\bf Key words:} extremal set theory, intersecting hypergraphs, $i$th largest degree.
\end{abstract}

\section{Introduction}

Let $[n]$ denote the standard $n$-element set $\{1,2,\ldots,n\}$ and $2^{[n]}$ its power set. Let $\binom{[n]}{k}$ be the family of all $k$-subsets of $[n]$. A family $\hf\subset \binom{[n]}{k}$ is called  {\it $t$-intersecting} if $|F\cap F'|\geq t$ for all  $F,F'\in \hf$ (if $t=1$ simply {\it intersecting}). To avoid trivialities, without further mention we are going to assume $n\geq 2k$ in the case of intersecting families and $n> 2k-t$ in the case of $t$-intersecting families. For $x\in [n]$, define
\[
\hf(x) = \{F\setminus \{x\}\colon x\in F\in\hf\}\mbox{ and } \hf(\bar{x}) =\{F\colon x\notin F\in \hf\}.
\]
Note that $|\hf(x)|+|\hf(\bar{x})| =|\hf|$.

\begin{defn}
Suppose that $x_1,\ldots,x_n$ is a permutation of $1,2,\ldots,n$ satisfying $|\hf(x_1)|\geq \ldots\geq |\hf(x_n)|$. Set $d_i(\hf) =|\hf(x_i)|$, i.e., $d_i(\hf)$ is the $i$th largest degree of $\hf$. When $\hf$ is clear from the context we sometimes write $d_i$ for short.
\end{defn}

In the literature $d_1(\hf)$ the largest degree is often denoted by $\Delta(\hf)$ and $d_n(\hf)$ the minimum degree by $\delta(\hf)$.

Let us recall some results concerning these quantities in intersecting $k$-graphs, $\hf\subset \binom{[n]}{k}$.

\vspace{6pt}
{\noindent\bf Exact Erd\H{o}s-Ko-Rado Theorem (\cite{ekr},  \cite{F78}, \cite{W84}).} Suppose that $n\geq (t+1)(k-t+1)$ and $\hf\subset \binom{[n]}{k}$ is $t$-intersecting. Then
\begin{align}\label{ineq-ekr}
|\hf| \leq \binom{n-t}{k-t}.
\end{align}
\vspace{6pt}

The Hilton-Milner Theorem is a strong stability result for the $t=1$ case of the Erd\H{o}s-Ko-Rado Theorem. We say that $\hf\subset \binom{[n]}{k}$ is {\it non-trivial} intersecting if $\cap \{F\colon F\in \hf\} =\emptyset$.

\vspace{6pt}
{\noindent\bf Hilton-Milner Theorem (\cite{HM67}).} If $n> 2k$ and $\hf\subset \binom{[n]}{k}$ is non-trivial intersecting, then
\begin{align}\label{ineq-nontrival}
|\hf| \leq \binom{n-1}{k-1}- \binom{n-k-1}{k-1} +1.
\end{align}
\vspace{6pt}

\begin{cor}
Suppose that $\hf\subset \binom{[n]}{k}$ is intersecting and $d_1(\hf)<|\hf|$. Then
\[
d_1(\hf)\leq \binom{n-1}{k-1}-\binom{n-k-1}{k-1}.
\]
\end{cor}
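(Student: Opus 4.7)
The plan is to exploit the hypothesis $d_1(\hf) < |\hf|$ in its most direct form: it means that the vertex of largest degree is not contained in every edge of $\hf$, so there exists some edge of $\hf$ avoiding it. This single witness edge is enough to bound the star of the vertex of largest degree via a trivial intersection-counting argument — so I would not invoke the Hilton--Milner theorem itself (the preceding inequality \eqref{ineq-nontrival}), only the basic estimate used at its start.

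Here is the sequence of steps. First, let $x \in [n]$ be a vertex with $|\hf(x)| = d_1(\hf)$. The assumption $d_1(\hf) < |\hf|$ means $|\hf(\bar{x})| = |\hf| - |\hf(x)| \geq 1$, so pick some $F_0 \in \hf$ with $x \notin F_0$. Second, note that for every $F \in \hf$ with $x \in F$, the intersecting property gives $F \cap F_0 \neq \emptyset$, and since $x \notin F_0$ this intersection lies entirely inside $F \setminus \{x\}$; hence every member of $\hf(x)$ is a $(k-1)$-subset of $[n]\setminus\{x\}$ meeting $F_0$. Third, bound $|\hf(x)|$ by the total number of such $(k-1)$-subsets:
\[
d_1(\hf) = |\hf(x)| \leq \binom{n-1}{k-1} - \binom{(n-1)-k}{k-1} = \binom{n-1}{k-1} - \binom{n-k-1}{k-1},
\]
since $|[n]\setminus\{x\}| = n-1$ and $|([n]\setminus\{x\}) \setminus F_0| = n-1-k$.

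There is essentially no obstacle: the entire content of the corollary is the observation that the hypothesis $d_1 < |\hf|$ is equivalent to the existence of at least one edge missing the vertex of maximum degree, which is precisely the one ingredient needed for the above one-line counting bound. In particular the corollary is genuinely weaker than the Hilton--Milner theorem (it is merely the "star portion" of the extremal inequality, with no need for the $+1$ coming from the extra edge). The only minor care needed is to verify the standing assumption $n > 2k$ makes $\binom{n-k-1}{k-1}$ well-defined and positive, which it does.
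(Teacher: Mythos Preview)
Your argument is correct, but it is not the deduction the paper has in mind. In the paper the statement is placed immediately after the Hilton--Milner Theorem and is labelled as its corollary; the intended one-line proof is: $d_1(\hf)<|\hf|$ forces $\hf$ to be non-trivial intersecting (otherwise some common element $y$ would give $d_1(\hf)\geq |\hf(y)|=|\hf|$), so \eqref{ineq-nontrival} yields $|\hf|\leq \binom{n-1}{k-1}-\binom{n-k-1}{k-1}+1$, and combining with the strict inequality $d_1(\hf)\leq |\hf|-1$ gives the desired bound. Your route bypasses Hilton--Milner entirely: you only use the existence of a single edge $F_0$ missing the top-degree vertex $x$ and the trivial count of $(k-1)$-sets of $[n]\setminus\{x\}$ meeting $F_0$. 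This is more elementary and self-contained (indeed it is exactly the ``star portion'' of the Hilton--Milner extremal family), while the paper's deduction is shorter given that Hilton--Milner has already been stated.
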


Note that for the full star $\hs =\{S\in \binom{[n]}{k}\colon 1\in S\}$, $d_i(\hs) =\binom{n-2}{k-2}$ for $2\leq i\leq n$. Huang and Zhao gave a beautiful algebraic proof showing that $d_n(\hf) \leq \binom{n-2}{k-2}$ is always true.

\begin{thm}[\cite{HZ}]\label{thm-hz}
For $n>2k$, if $\hf\subset \binom{[n]}{k}$ is intersecting, then
\[
d_n(\hf) \leq \binom{n-2}{k-2}.
\]
\end{thm}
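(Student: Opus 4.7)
My plan is to split the argument according to whether $\hf$ is trivially intersecting. In the trivial case $\bigcap_{F\in\hf} F \neq \emptyset$, fix any $x_0$ in the common intersection. For every $y \in [n]\setminus\{x_0\}$, each $F \in \hf$ containing $y$ must also contain $x_0$, so
\[
d_y(\hf) \leq \left|\left\{F \in \binom{[n]}{k} : \{x_0,y\} \subset F\right\}\right| = \binom{n-2}{k-2}.
\]
Since $n-1 \geq 2k \geq 2$, at least two such $y$ exist, giving $d_n(\hf) \leq \binom{n-2}{k-2}$ immediately.

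For the non-trivial case $\bigcap_{F\in\hf} F = \emptyset$, I would first combine the Hilton-Milner bound $|\hf| \leq \binom{n-1}{k-1}-\binom{n-k-1}{k-1}+1$ with the identity $\sum_{x\in[n]} d_x(\hf) = k|\hf|$. Since $d_n(\hf) \leq k|\hf|/n$, this yields the desired conclusion whenever $|\hf| \leq \tfrac{n}{k}\binom{n-2}{k-2}$. For $n$ large relative to $k$ the Hilton-Milner estimate falls below this threshold and we are done, but for small $n$ the two bounds do not combine to give what we need---already at $k=4$, $n=9$ one only obtains $d_n(\hf) \leq 23$ from averaging, whereas the conclusion demands $d_n(\hf) \leq 21$. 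So a genuine residual range of non-trivial intersecting families remains.

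The main obstacle is precisely this residual range when $n$ is close to $2k$. Shifting is of no direct use here, as it need not preserve the minimum degree in a helpful direction (it tends to spread the degree sequence, making the minimum degree smaller rather than larger), and standard stability results around the Hilton-Milner extremal give the structure of $\hf$ only up to a loose error. To close the gap I would follow Huang and Zhao's algebraic route: encode the minimum-degree condition as a linear constraint on the characteristic vector $\mathbf{1}_{\hf}$, and exploit the spectral structure of the Kneser graph $K(n,k)$, whose smallest eigenvalue $-\binom{n-k-1}{k-1}$ is tuned so that a Hoffman-type inequality applied to a test vector singling out the minimum-degree element delivers exactly the bound $\binom{n-2}{k-2}$. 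This algebraic step is where the real work lies; the combinatorial reductions above merely isolate the hard case.
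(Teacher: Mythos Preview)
The paper does not actually prove Theorem~\ref{thm-hz}; it is quoted as a result of Huang and Zhao~\cite{HZ} and used as background. So there is no ``paper's own proof'' to compare against.

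That said, your proposal is not a proof either. The trivial case is fine, and your diagnosis of the averaging-plus-Hilton--Milner approach is accurate: it really does fail near $n=2k$. But your treatment of the residual range amounts to the sentence ``follow Huang and Zhao's algebraic route'' together with a one-line gloss on Hoffman's bound. That is a citation, not an argument. The actual Huang--Zhao proof is not just ``apply a Hoffman-type inequality to a test vector singling out the minimum-degree element'': the standard Hoffman bound on the Kneser graph gives the Erd\H{o}s--Ko--Rado size bound, not a degree bound, and extracting a minimum-degree statement requires a more careful choice of auxiliary vector and a separate computation tying the eigenvalue $-\binom{n-k-1}{k-1}$ to the quantity $\binom{n-2}{k-2}$. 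None of that is present here. If you intend to reproduce their argument you need to write down the test vector explicitly, verify the quadratic-form inequality, and show how the algebra collapses to $d_n(\hf)\le\binom{n-2}{k-2}$; if you intend merely to invoke their theorem, then there is nothing to prove and the preliminary case analysis is unnecessary.
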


There is a natural sequence of intersecting families containing the Hilton-Milner Family, $\hh_k$.

\begin{defn}
For $2\leq \ell\leq k$, define
\[
\hh_\ell = \hh_\ell(n,k) =\left\{H\in \binom{[n]}{k}\colon 1\in H,\ H\cap [2,\ell+1]\neq \emptyset\right\}\cup \left\{H\in \binom{[n]}{k}\colon [2,\ell+1] \subset H\right\}.
\]
\end{defn}

It is easy to verify that
\begin{align}\label{ineq-di}
&d_i(\hh_\ell) =\binom{n-2}{k-2}+\binom{n-\ell-1}{k-\ell} \mbox{ for } 2\leq i\leq \ell+1,\\[3pt]
& |\hh_\ell| =\binom{n-1}{k-1} - \binom{n-\ell-1}{k-1}+\binom{n-\ell-1}{k-\ell}.
\end{align}

A by now classical result of the first author relates the maximum degree and the size of an intersecting family.

\begin{thm}[\cite{F87-2}]\label{thm-f87}
Suppose that $\hf\subset \binom{[n]}{k}$ is intersecting, $n>2k$, $2\leq \ell\leq k$ and $d_1(\hf)\leq d_1(\hh_\ell)=\binom{n-1}{k-1}-\binom{n-\ell-1}{k-1}$. Then
\[
|\hf| \leq |\hh_\ell|.
\]
Moreover, in case of equality either $\hf$ is isomorphic to $\hh_\ell$ or $\ell=3$ and $\hf\cong \hh_2$.
\end{thm}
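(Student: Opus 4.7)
The plan is to combine the shifting technique with a Kruskal--Katona type analysis of the ``non-$1$-star'' part of $\hf$. First, relabel the elements of $[n]$ so that element $1$ attains the maximum degree of $\hf$, giving $|\hf(1)|=d_1(\hf)$. Then apply the shifts $S_{ij}$ with $2\le i<j\le n$: such shifts preserve $|\hf|$, the intersecting property, and $|\hf(1)|$ (they merely permute sets within $\hf_1:=\{F\in\hf\colon 1\in F\}$ and within $\hf_{\bar 1}:=\hf\setminus\hf_1$ separately). Hence the hypothesis is maintained, and we may assume $\hf$ is shifted on $[2,n]$. The hypothesis becomes $|\hf_1|\le X:=\binom{n-1}{k-1}-\binom{n-\ell-1}{k-1}$.

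Set $\hf_1':=\{F\setminus\{1\}\colon F\in\hf_1\}\subset\binom{[2,n]}{k-1}$ and $\mathcal{A}:=\{[2,n]\setminus F\colon F\in\hf_{\bar 1}\}\subset\binom{[2,n]}{n-k-1}$. The intersecting property is equivalent to $\Delta_{k-1}(\mathcal{A})\subseteq\binom{[2,n]}{k-1}\setminus\hf_1'$, which gives $|\hf_1'|+|\Delta_{k-1}(\mathcal{A})|\le\binom{n-1}{k-1}$ and hence
\[
|\hf|=|\hf_1'|+|\mathcal{A}|\le \binom{n-1}{k-1}-|\Delta_{k-1}(\mathcal{A})|+|\mathcal{A}|.
\]
Since $\hf_{\bar 1}$ is shifted, $\mathcal{A}$ is ``anti-shifted'', and Kruskal--Katona supplies a lower bound on $|\Delta_{k-1}(\mathcal{A})|$ as a function of $|\mathcal{A}|$. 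Combined with the EKR bound $|\hf_{\bar 1}|\le\binom{n-2}{k-1}$ (coming from the fact that $\hf_{\bar 1}$ itself is an intersecting family in $\binom{[2,n]}{k}$ and $n-1\ge 2k$), a case analysis in the relative sizes of $|\hf_1|$ and $|\hf_{\bar 1}|$ (with the hypothesis $|\hf_1'|\le X$ binding the KK threshold) yields $|\hf|\le|\hh_\ell|$. The extremal configuration is $\hf_1'=\{G\colon G\cap[2,\ell+1]\ne\emptyset\}$ together with $\mathcal{A}=\binom{[\ell+2,n]}{n-k-1}$, which reconstructs $\hh_\ell$.

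The main obstacle is the equality analysis, and in particular the exceptional case $\ell=3$. The binomial identity
\[
\binom{n-3}{k-2}-\binom{n-3}{k-1}=\binom{n-4}{k-3}-\binom{n-4}{k-1}
\]
forces $|\hh_2|=|\hh_3|$; since also $d_1(\hh_2)<d_1(\hh_3)$, the family $\hh_2$ satisfies the hypothesis at $\ell=3$ and attains the bound, yielding a second non-isomorphic extremal configuration. Ruling out any further extremal family requires careful tracking of the equality case in both the Kruskal--Katona step and the shifting step, which is the most delicate part of the argument.
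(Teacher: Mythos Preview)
This theorem is not proved in the present paper: it is quoted with attribution to \cite{F87-2} and invoked later as a black box (in the proof of Lemma~\ref{lem-4.4}). There is consequently no in-paper argument to compare your proposal against.

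For completeness, your plan is the standard one from \cite{F87-2}: fix a vertex of maximum degree, shift on the remaining ground set (which preserves $|\hf(1)|$ and hence the hypothesis $|\hf(1)|\le\binom{n-1}{k-1}-\binom{n-\ell-1}{k-1}$), and analyze the cross-intersecting pair $(\hf(1),\hf(\bar1))$ via the Hilton form of Kruskal--Katona, reducing to lex-initial families. Your outline stops at exactly the point where the work begins, deferring to an unspecified ``case analysis in the relative sizes''; that computation---showing that over lex-initial cross-intersecting pairs with $|\hf(1)|$ bounded as above, the sum $|\hf(1)|+|\hf(\bar1)|$ is maximized at the configuration coming from $\hh_\ell$---is the actual content of the theorem. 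The equality characterization, including the $\ell=3$ coincidence $|\hh_2|=|\hh_3|$ that you correctly flag, requires tracking equality through both the Kruskal--Katona step and the un-shifting step, neither of which you carry out. So the skeleton is sound and matches the original source, but what you have written is a plan rather than a proof.
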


Since $\hh_2 =\{H\in \binom{[n]}{k}\colon |H\cap [3]|\geq 2\}$, it is often called the {\it triangle family}.

In this paper, we are mainly concerned with the $i$th degree of an intersecting family $\hf\subset \binom{[n]}{k}$. For $k=2$, there are only two types of  intersecting families, the stars and the triangle. Therefore we always assume $k\geq  3$.

Our first result shows that the maximum of $d_2$ and $d_3$
 are given by the triangle family.   The surprisingly short proof relies on the idea of Daykin \cite{daykin}.

 \begin{thm}\label{thm-main-1}
 Suppose that $\hf\subset \binom{[n]}{k}$ is intersecting, $n>2k$. Then
 \begin{align}
d_2(\hf) \leq \binom{n-2}{k-2}+\binom{n-3}{k-2}.
 \end{align}
 Moreover, if equality holds for $d_2(\hf)$ then $\hf$ is isomorphic to $\hh_2$.
 \end{thm}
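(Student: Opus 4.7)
The plan is to use Daykin's complements-plus-Kruskal--Katona trick on a pair of cross-intersecting families extracted from $\hf$. After relabelling so that element $1$ has the largest degree and $2$ has the second largest, I partition $\hf$ by containment of $\{1,2\}$ and set $a=|\hf_{12}|$, $b=|\hf_{1\bar 2}|$, $c=|\hf_{\bar 1 2}|$. Then $d_1=a+b$ and $d_2=a+c$, so $d_1\geq d_2$ yields $b\geq c$. The trivial bound $a\leq \binom{n-2}{k-2}$ reduces the theorem to proving
\[
c\leq \binom{n-3}{k-2}.
\]

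For this, I would consider the $(k-1)$-uniform families $\ha=\{F\setminus\{2\}:F\in\hf_{\bar 1 2}\}$ and $\hb=\{F\setminus\{1\}:F\in\hf_{1\bar 2}\}$ on the ground set $[3,n]$; the intersecting property of $\hf$ makes them cross-intersecting, with $|\ha|=c\leq b=|\hb|$. Daykin's observation is that $A\cap B\neq\emptyset$ is equivalent to $A\not\subset [3,n]\setminus B$, so $\ha$ is disjoint from the $(k-1)$-shadow of $\hb^c:=\{[3,n]\setminus B:B\in\hb\}$, giving $c+|\partial_{k-1}\hb^c|\leq \binom{n-2}{k-1}$. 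Assume for contradiction that $c>\binom{n-3}{k-2}$; then $|\hb^c|=b>\binom{n-3}{n-k-1}$ as well. Using the cascade expansion $\binom{n-3}{n-k-1}+\binom{n-k-2}{n-k-2}$, Kruskal--Katona yields the shadow lower bound $\binom{n-3}{k-1}+\binom{n-k-2}{k-2}$; combining this with Pascal's identity $\binom{n-2}{k-1}=\binom{n-3}{k-1}+\binom{n-3}{k-2}$ forces $c\leq \binom{n-3}{k-2}-\binom{n-k-2}{k-2}$, contradicting the assumption since $n>2k$ makes $\binom{n-k-2}{k-2}>0$.

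For the uniqueness statement, equality forces $a=\binom{n-2}{k-2}$ (so every $k$-set through $\{1,2\}$ lies in $\hf$) and $c=\binom{n-3}{k-2}$; rerunning the Daykin inequality in the equality regime then forces $b=c=\binom{n-3}{k-2}$ and the equality case of Kruskal--Katona. The latter pins $\hb^c$ to an initial segment in colex, i.e., $\hb$ must be a full star in $[3,n]$; a symmetric argument with the roles of $\ha,\hb$ swapped does the same for $\ha$, and cross-intersection aligns the stars to have a common apex $v$, which we relabel as $3$. This identifies $\hf_{12}\cup\hf_{1\bar 2}\cup\hf_{\bar 1 2}$ with $\hh_2$, and finally $\hf_{\bar 1\bar 2}$ must be empty, since any member would have to meet every $(k-2)$-subset of $[3,n]$, impossible when $n>2k$. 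The main obstacle I expect is the Kruskal--Katona bookkeeping: both identifying the correct cascade for the strict-inequality step and extracting the star structure in the equality case; the partition and the reduction to a cross-intersecting pair are essentially routine once one decides to compare degrees of $1$ and $2$.
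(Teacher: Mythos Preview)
Your proposal is correct and follows essentially the same route as the paper: partition by containment of $\{1,2\}$, reduce to bounding $c=|\hf(\bar 1,2)|$ via the cross-intersecting pair $\hf(1,\bar 2),\hf(\bar 1,2)$ on $[3,n]$, and then invoke Kruskal--Katona. The only cosmetic difference is that the paper packages your complement-plus-shadow computation as a black-box citation of the Daykin Theorem (stated with its equality case), which immediately gives $c\le\binom{n-3}{k-2}$ and, in case of equality, that $\hf(1,\bar 2)=\hf(\bar 1,2)$ are identical full stars; you unroll that same argument by hand, which is fine but a bit longer.
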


 For $n\geq 6k-9$, we determine $d_{2k+1}(\hf)$ for an arbitrary intersecting family $\hf\subset \binom{[n]}{k}$.

\begin{thm}\label{thm-main-4}
Let $\hf\subset \binom{[n]}{k}$ be an intersecting family. Then for $n\geq 6k-9$,
\[
d_{2k+1}(\hf) \leq \binom{n-2}{k-2}.
\]
\end{thm}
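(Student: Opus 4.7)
The plan is to argue by contradiction. Assume $d_{2k+1}(\hf) > \binom{n-2}{k-2}$, so there is a set $Y$ of $2k+1$ vertices each with $|\hf(y)| \geq \binom{n-2}{k-2}+1$; write $U = [n]\setminus Y$. I first dispose of the trivial case: if $\hf$ is a star then every non-centre vertex has degree exactly $\binom{n-2}{k-2}$, so I may assume $\hf$ is non-trivially intersecting. The Hilton--Milner theorem then yields $|\hf| \leq \binom{n-1}{k-1}-\binom{n-k-1}{k-1}+1$.

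Next, after relabeling I may assume $Y=[2k+1]$, and I apply standard left-shifts. Shifting preserves the intersecting property and majorises the degree sequence, so the $(2k+1)$-th largest degree cannot decrease; after shifting we have $|\hf(1)|\geq\cdots\geq|\hf(n)|$ with $|\hf(i)| \geq \binom{n-2}{k-2}+1$ for each $i \in [2k+1]$. The key structural fact available in the shifted setting is that every $F \in \hf$ meets $[k]$: otherwise $F$ could be shifted into $[k]\in\hf$, contradicting the intersecting property.

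The core of the argument is a double count. Partition $\hf = \hf_Y \sqcup \hf_U$, where $\hf_Y = \{F \in \hf : F \subseteq Y\}$ and $\hf_U = \hf \setminus \hf_Y$. Since $|Y|=2k+1\geq 2k$, Erd\H{o}s--Ko--Rado gives $|\hf_Y| \leq \binom{2k}{k-1}$. Each edge of $\hf_Y$ contributes $k$ to $\sum_{y \in Y}|\hf(y)|$, while each edge of $\hf_U$ contributes at most $k-1$ (since it has at least one vertex in $U$). This produces
\[
(2k+1)\!\left(\binom{n-2}{k-2}+1\right)\ \leq\ \sum_{y \in Y} |\hf(y)|\ \leq\ k\binom{2k}{k-1} + (k-1)|\hf|.
\]
Substituting the Hilton--Milner bound on $|\hf|$ should collapse this into a polynomial inequality in $n,k$ that fails once $n$ is sufficiently large.

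The main obstacle will be the regime where $|\hf|$ is close to the Hilton--Milner maximum, because there the blunt substitution above is too generous: the family $\hh_k$ meets that maximum while having only $k+1 \ll 2k+1$ heavy vertices. To close this case I expect to split according to where $d_1(\hf)$ sits relative to the thresholds $d_1(\hh_\ell) = \binom{n-1}{k-1}-\binom{n-\ell-1}{k-1}$. When $d_1(\hf) \leq d_1(\hh_\ell)$ for an appropriate small $\ell$, Theorem \ref{thm-f87} replaces the Hilton--Milner bound by the sharper $|\hf| \leq |\hh_\ell|$ and the double count goes through; in the top range one uses the near-$\hh_k$ structural content of Theorem \ref{thm-f87} to count the heavy vertices directly. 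The precise threshold $n \geq 6k-9$ should fall out of the most delicate of these intermediate case calculations, arising from balancing $k\binom{2k}{k-1}$ against the heavy-vertex lower bound $(2k+1)(\binom{n-2}{k-2}+1)$ once the $|\hf|$-bound has been optimised.
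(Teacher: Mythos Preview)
Your plan has two genuine gaps. First, shifting does not preserve the $(2k+1)$-th largest degree: a single shift $S_{ij}$ transfers some amount $m\ge 0$ of degree from $j$ to $i$, and when $d_i(\hf)>d_j(\hf)$ beforehand this can push $d_j$ below your threshold while its rank stays put. (Indeed the paper treats the shifted case separately in Theorem~\ref{thm-2.5}, obtaining the stronger $d_{2k}\le\binom{n-2}{k-2}$ for all $n>2k-2$, and then does substantial extra work for general $\hf$ precisely because one cannot shift.) Second, and fatally, the double count over all of $\hf$ is too weak for $k\ge 5$. Even granting the sharpest size bound your scheme can invoke, $|\hf|\le|\hh_2|=\binom{n-2}{k-2}+2\binom{n-3}{k-2}$ via Theorem~\ref{thm-f87}, your inequality needs $(k-1)|\hh_2|+\binom{2k}{k-1}<(2k+1)\binom{n-2}{k-2}$, which reduces essentially to $2(k-1)\binom{n-3}{k-2}<(k+2)\binom{n-2}{k-2}$; since $\binom{n-3}{k-2}/\binom{n-2}{k-2}\to 1$, this \emph{fails} for all large $n$ once $2(k-1)>k+2$, i.e.\ $k\ge 5$. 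And in the complementary ``top range'' $d_1(\hf)>d_1(\hh_2)$ you have no size bound at all beyond Hilton--Milner, so the plan is entirely open there.

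The paper's argument double-counts inside $\hf(\bar 1)$ rather than in all of $\hf$. A Kruskal--Katona computation (Proposition~\ref{prop-4.1}, via Corollary~\ref{cor-1}) shows that whenever $|\hf(v)|>\binom{n-2}{k-2}$ and $|\hf(\bar 1,v)|\le\binom{n-4}{k-2}$ one has $|\hf(\bar 1,v)|>\tfrac12|\hf(\bar 1)|$; summing over $2k$ such heavy vertices $v\ne 1$ yields $k|\hf(\bar 1)|>k|\hf(\bar 1)|$, a contradiction. The side hypothesis $|\hf(\bar 1,v)|\le\binom{n-4}{k-2}$ is arranged by a separate structural lemma (Corollary~\ref{cor-4.2}), and that is exactly where the threshold $n\ge 6k-9$ enters. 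The missing idea in your plan is this localisation to the diversity $\hf(\bar 1)$: counting there, each edge contributes a full $k$ to the degree sum, so $2k$ vertices each capturing more than half of $|\hf(\bar 1)|$ is already impossible, with no need to control $|\hf|$ globally.
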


Note that in the above range Theorem \ref{thm-main-4} is a considerable strengthening of the Huang-Zhao Theorem (Theorem \ref{thm-hz}).

For $n\geq \lceil\frac{8k}{3}\rceil$, we prove that $d_{\lceil\frac{8k}{3}\rceil}(\hf) \leq \binom{n-2}{k-2}$.

\begin{thm}\label{thm-main-6}
Let $\hf\subset \binom{[n]}{k}$ be an intersecting family. Then for $n\geq \lceil\frac{8k}{3}\rceil$,
\[
d_{\lceil\frac{8k}{3}\rceil}(\hf) \leq \binom{n-2}{k-2}.
\]
\end{thm}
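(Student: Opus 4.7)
I would argue by contradiction: set $t=\lceil 8k/3\rceil$, let $T$ be a set of $t$ vertices with $|\hf(x)|\geq\binom{n-2}{k-2}+1$ for each $x\in T$, and aim to derive a contradiction. The family $\hf$ must be non-trivial intersecting, since a (sub)star has at most one vertex of degree exceeding $\binom{n-2}{k-2}$; hence the Hilton--Milner inequality \eqref{ineq-nontrival} gives $|\hf|\leq\binom{n-1}{k-1}-\binom{n-k-1}{k-1}+1$, and the Corollary to Hilton--Milner gives $d_1(\hf)\leq\binom{n-1}{k-1}-\binom{n-k-1}{k-1}$. A first estimate comes from double-counting incidences:
\[
t\Bigl(\binom{n-2}{k-2}+1\Bigr)\leq\sum_{x\in T}|\hf(x)|=\sum_{F\in\hf}|F\cap T|\leq k|\hf|,
\]
so $|\hf|\geq(t/k)\bigl(\binom{n-2}{k-2}+1\bigr)\geq \tfrac{8}{3}\binom{n-2}{k-2}$.

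Next I would split on the covering number $\tau(\hf)$. The case $\tau=1$ is immediate, since then $\hf$ is contained in a single star and only the centre can have degree above $\binom{n-2}{k-2}$, contradicting $|T|\geq 2$. In the case $\tau=2$, fix a minimum cover $\{u,v\}$ and decompose $\hf=\hf_u\cup\hf_v\cup\hf_{uv}$ with $\hf_u=\{F\in\hf:u\in F,\,v\notin F\}$, $\hf_v=\{F\in\hf:v\in F,\,u\notin F\}$, $\hf_{uv}=\{F\in\hf:\{u,v\}\subset F\}$; pass to the cross-intersecting link families $\hg_u=\{F\setminus\{u\}:F\in\hf_u\}$ and $\hg_v=\{F\setminus\{v\}:F\in\hf_v\}$ in $\binom{[n]\setminus\{u,v\}}{k-1}$. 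Any $z\in T\setminus\{u,v\}$ satisfies $|\hf(z)|\leq|\hg_u(z)|+|\hg_v(z)|+\binom{n-3}{k-3}$, so the lower bound on $|\hf(z)|$ forces $|\hg_u(z)|+|\hg_v(z)|>\binom{n-3}{k-2}$; summing this over such $z$ and combining with a sharp Hilton--Milner-type inequality for cross-intersecting $(k-1)$-uniform families, plus the decomposition $|\hf|=|\hf_u|+|\hf_v|+|\hf_{uv}|$, bounds $|T\setminus\{u,v\}|$ tightly enough to contradict $|T|\geq t$.

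The case $\tau(\hf)\geq 3$ is the crux. I would take a minimum cover $C=\{c_1,\ldots,c_\tau\}$ and pick edges $F_i\in\hf$ with $F_i\cap C=\{c_i\}$; then $F_1,F_2,F_3$ pairwise intersect inside $[n]\setminus C$, and every other edge of $\hf$ must meet each $F_i$. An inclusion-exclusion count classifying edges by their intersection pattern with $F_1\cup F_2\cup F_3$ yields an upper bound on $|\hf|$ which, matched against the lower bound $\tfrac{8}{3}\binom{n-2}{k-2}$ from the double-counting step, produces the contradiction precisely at the threshold $n\geq\lceil 8k/3\rceil$.

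The hardest part is tuning the two upper bounds on $|\hf|$ (one in the $\tau=2$ regime via cross-intersecting estimates, one in the $\tau\geq 3$ regime via the three-edge analysis) so sharply that the contradiction arises exactly at $n=\lceil 8k/3\rceil$ rather than at a larger value; the constant $8/3$ in the statement is precisely the ratio of leading coefficients in these matching bounds relative to $\binom{n-2}{k-2}$, so the arithmetic must be carried out with care (not just asymptotically) across the full range.
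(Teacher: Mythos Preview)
Your double-counting lower bound is exactly right and is the heart of the argument: if $d_{\lceil 8k/3\rceil}(\hf)>\binom{n-2}{k-2}$ then summing degrees over $\lceil 8k/3\rceil$ high-degree vertices forces
\[
k|\hf|\;>\;\left\lceil\tfrac{8k}{3}\right\rceil\binom{n-2}{k-2},
\qquad\text{hence}\qquad |\hf|>\tfrac{8}{3}\binom{n-2}{k-2}.
\]
The entire content of the proof is therefore to produce the matching upper bound $|\hf|\le\frac{8}{3}\binom{n-2}{k-2}$, and this is where your plan is only a plan.

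Your case split on $\tau(\hf)$ does not deliver this bound. For $\tau=2$ you cannot hope for a global size bound: the Hilton--Milner family $\hh_k$ has $\tau=2$ and size close to $\binom{n-1}{k-1}$, far above $\frac{8}{3}\binom{n-2}{k-2}$. Your more refined idea of bounding $|T\setminus\{u,v\}|$ via the cross-intersecting links $\hg_u,\hg_v$ is in the right spirit (indeed the paper carries out something similar in Proposition~4.5), but as written you give no inequality that closes the argument. For $\tau\ge 3$ the situation is worse: ``an inclusion--exclusion count classifying edges by their intersection pattern with $F_1\cup F_2\cup F_3$'' is not a bound, and there is no reason the resulting estimate should meet $\frac{8}{3}\binom{n-2}{k-2}$ exactly at $n=\lceil 8k/3\rceil$. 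You assert that the constant $8/3$ emerges ``precisely'' from your two cases, but you never write down either upper bound, so nothing can be checked.

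The paper obtains the upper bound by a completely different route that does not mention $\tau$. It proves (Lemma~\ref{lem-4.4}) that either $d_{2k+1}(\hf)\le\binom{n-2}{k-2}$ already holds, or $|\hf|\le\binom{n-2}{k-2}+2\binom{n-3}{k-2}$. The second alternative is obtained by showing that if the diversity $\gamma(\hf)=|\hf(\bar 1)|$ exceeds $\binom{n-4}{k-2}$ then the Kruskal--Katona bound on the cross-intersecting pair $\hf(1),\hf(\bar 1)$ forces $d_1(\hf)\le\binom{n-1}{k-1}-\binom{n-4}{k-1}$, after which Theorem~\ref{thm-f87} (Frankl's maximum-degree theorem with $\ell=3$) gives $|\hf|\le|\hh_3|=\binom{n-2}{k-2}+2\binom{n-3}{k-2}$; while if $\gamma(\hf)\le\binom{n-4}{k-2}$ a separate pigeonhole (Proposition~\ref{prop-4.1}) already yields $d_{2k+1}\le\binom{n-2}{k-2}$. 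Finally one computes
\[
\binom{n-2}{k-2}+2\binom{n-3}{k-2}=\Bigl(3-\tfrac{2(k-2)}{n-2}\Bigr)\binom{n-2}{k-2}\le\tfrac{8}{3}\binom{n-2}{k-2}
\quad\text{iff}\quad n\le 6k-10,
\]
which together with Theorem~\ref{thm-main-4} for $n\ge 6k-9$ covers the full range $n\ge\lceil 8k/3\rceil$. So the constant $8/3$ comes not from any $\tau$-based dichotomy but from the exact value $|\hh_3|=|\hh_2|$ and the identity above. Your outline misses this mechanism entirely.
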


For $n\geq \binom{t+2}{2}k^2$, we can prove the stronger bound $d_{k+2}(\hf)\leq \binom{n-t-1}{k-t-1}$ for any $t$-intersecting family $\hf\subset\binom{[n]}{k}$.

\begin{thm}\label{thm-main-5}
Suppose that $\hf\subset\binom{[n]}{k}$ is $t$-intersecting and $n\geq \binom{t+2}{2}k^2$, $k>t\geq 1$. Then
\begin{align}\label{ineq-5.5}
d_{k+2}(\hf)\leq \binom{n-t-1}{k-t-1}.
\end{align}
\end{thm}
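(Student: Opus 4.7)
We argue by contradiction. Suppose $d_{k+2}(\hf)>\binom{n-t-1}{k-t-1}$, and let $Y\subset[n]$ denote a set of $k+2$ elements each of degree $>\binom{n-t-1}{k-t-1}$ in $\hf$. The strategy is to force $\hf$ to be a \emph{trivial} $t$-intersecting family, i.e. some $t$-set $T$ satisfies $T\subset F$ for every $F\in\hf$; the contradiction then follows immediately, since any $y\notin T$ has $d(y)\leq\binom{n-t-1}{k-t-1}$ and $|T|=t<k+2$.

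First I would apply left-shifting, which preserves $t$-intersectness and does not decrease any $d_i(\hf)$. After this reduction we may assume $\hf$ is shifted, in which case $d_i(\hf)=d(i)$ (the degree of vertex $i$) for every $i$. The key quantity is $\tau:=\tau_t(\hf)$, the minimum size of a $t$-cover of $\hf$. If $\tau=t$ then $\hf$ is trivial and we are done. So assume $\tau\geq t+1$ and fix a minimum $t$-cover $S$ with $|S|=\tau$. For any $x\notin S$, each $F\in\hf$ with $x\in F$ decomposes as $\{x\}\sqcup(F\cap S)\sqcup(F\setminus(S\cup\{x\}))$ with $|F\cap S|\geq t$, which gives
\begin{equation*}
d(x)\leq\sum_{s=t}^{k-1}\binom{\tau}{s}\binom{n-\tau-1}{k-s-1}.
\end{equation*}

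When $\tau\geq t+2$ and $n\geq\binom{t+2}{2}k^2$, this right-hand side is at most $\binom{n-t-1}{k-t-1}$: the dominant term $\binom{\tau}{t}\binom{n-\tau-1}{k-t-1}$ has binomial factor $\binom{n-\tau-1}{k-t-1}/\binom{n-t-1}{k-t-1}$ small enough to absorb $\binom{\tau}{t}\leq\binom{k}{t}$ under the threshold on $n$, and lower-order terms are controlled similarly. It follows that $Y\subset S$, whence $|Y|\leq|S|\leq k$, contradicting $|Y|=k+2$.

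The remaining, delicate subcase is $\tau=t+1$. Writing $S=\{s_1,\ldots,s_{t+1}\}$, every $F\in\hf$ either contains $S$ or misses exactly one $s_i$. Setting $\hg_i=\{F\setminus S:F\in\hf,\ S\setminus\{s_i\}\subset F,\ s_i\notin F\}\subset\binom{[n]\setminus S}{k-t}$, the $t$-intersecting property of $\hf$ forces $\hg_1,\ldots,\hg_{t+1}$ to be pairwise cross-intersecting. A Hilton--Milner type analysis of this cross-intersecting system should then show that at most one element outside $S$ can have degree exceeding $\binom{n-t-1}{k-t-1}$, giving $|Y|\leq|S|+1=t+2\leq k+1$ and hence the final contradiction. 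The main obstacle throughout is this cross-intersecting analysis, and turning the explicit threshold $n\geq\binom{t+2}{2}k^2$ into the sharp degree bounds required in both the $\tau\geq t+2$ and the $\tau=t+1$ subcases.
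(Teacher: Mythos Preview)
Your overall shape (split on $\tau_t(\hf)$) matches the paper, but each branch has a real gap.

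First, the claim that shifting ``does not decrease any $d_i(\hf)$'' is false: a single shift $S_{ij}$ raises $d(i)$ by some $\delta$ and lowers $d(j)$ by the same $\delta$ while leaving all other degrees fixed, and the \emph{sorted} sequence can strictly drop at some index. The paper does not reduce to the shifted case here. More seriously, in the case $\tau\ge t+2$ your bound $d(x)\le\sum_{s\ge t}\binom{\tau}{s}\binom{n-\tau-1}{k-s-1}$ uses only that $S$ is a $t$-cover, and this is \emph{not} below $\binom{n-t-1}{k-t-1}$. Already the leading term fails: $\binom{n-\tau-1}{k-t-1}\big/\binom{n-t-1}{k-t-1}=\prod_{j=1}^{k-t-1}\frac{n-\tau-j}{n-t-j}\to 1$ as $n\to\infty$, so it cannot absorb the factor $\binom{\tau}{t}\ge\binom{t+2}{2}\ge 3$. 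The fix (Proposition~\ref{prop-new5.1}) is a branching argument that also exploits \emph{minimality} of $\tau$: since every set of size $<\tau$ fails to $t$-cover $\hf$, one iterates from a $t$-subset of $S$, at each step branching over an $F_{\ell+1}\in\hf$ witnessing that $\{y,z_1,\dots,z_\ell\}$ is not a $t$-cover, and obtains $|\hf(y)|\le\binom{\tau}{t}\,k^{\tau-t-1}\binom{n-\tau}{k-\tau}$. The crucial gain is that the free part now has size $k-\tau$, not $k-t-1$.

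For $\tau=t+1$, your assertion that at most one vertex outside $S$ can have large degree is simply false: in $\hh(n,k,t)$ one has $\tau_t=t+1$ with minimum $t$-cover $S=[t+1]$, yet every vertex in $[t+2,k+1]$ --- that is, $k-t$ vertices outside $S$ --- has degree $\binom{n-t-1}{k-t-1}+t$. The paper instead saturates $\hf$ and studies the collection $\hb_0$ of \emph{all} minimal $(t+1)$-transversals: a short sunflower argument (Claim~\ref{claim-5.4}) gives $|\cup\hb_0|\le k+1$, after which a structural case split (whether $\hb_0$ is a sunflower or not) produces a set of size at most $k+1$ outside which every degree is below $\binom{n-t-1}{k-t-1}$. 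Your cross-intersecting decomposition of the $\hg_i$ is correct, but it does not by itself limit the number of high-degree vertices to one.
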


Define
\[
\hh(n,k,t) =\left\{H\in \binom{[n]}{k}\colon [t]\subset H, H\cap [t+1,k+1]\neq \emptyset\right\}\cup \left\{[k+1]\setminus \{j\}\colon 1\leq j\leq t\right\}.
\]
It is easy to check that
\[
d_{t+1}(\hh(n,k,t) ) =\ldots=d_{k+1}(\hh(n,k,t) )=\binom{n-t-1}{k-t-1}+t.
\]
This shows that in a certain sense  \eqref{ineq-5.5} is best possible.

 Concerning the general case $d_{\ell}$ for  $4\leq \ell\leq k+1$ we have the following.

 \begin{prob}
 Determine or estimate the minimum value $n_0(k,\ell)$ such that for $n>n_0(k,\ell)$ the $(\ell+1)$th largest degree in an intersecting
 $k$-graph $\hf$ on $n$ vertices satisfies
 \begin{align}\label{ineq-d-ell}
 d_{\ell+1}(\hf)\leq \binom{n-2}{k-2} +\binom{n-\ell-1}{k-\ell}.
 \end{align}
 \end{prob}

 The next two results  establish \eqref{ineq-d-ell} under certain constraints.

\begin{thm}\label{thm-main-2}
Suppose that $\hf\subset \binom{[n]}{k}$ is intersecting, $n\geq 6k$. Then
 \begin{align*}
 d_4(\hf)\leq \binom{n-2}{k-2}+\binom{n-4}{k-3}.
 \end{align*}
\end{thm}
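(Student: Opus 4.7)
The plan is to argue by contradiction. Assume $d_4(\hf)>D:=\binom{n-2}{k-2}+\binom{n-4}{k-3}$, pick four vertices $x_1,x_2,x_3,x_4$ with $d(x_i)>D$, and after relabelling take them to be $1,2,3,4$. Any star has $d_i\le\binom{n-2}{k-2}<D$ for $i\ge 2$, so $\hf$ must be non-trivially intersecting, and Theorem \ref{thm-main-1} already gives the ceiling $d(i)\le\binom{n-2}{k-2}+\binom{n-3}{k-2}$ for $i\in\{2,3,4\}$.

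I would split on the covering number $\tau(\hf)$. If $\tau(\hf)\ge 3$, then $\hf(\bar v)$ is non-trivial intersecting on $n-1$ vertices for every $v$, and one can pick three edges of $\hf$ missing the maximum-degree vertex $v=1$ whose pairwise intersections are not concentrated on a single vertex; a three-fold Hilton--Milner-type inclusion-exclusion count of the $(k-1)$-sets meeting all three then forces $\Delta(\hf)\le D$ for $n\ge 6k$, contradicting $d(1)>D$.

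The principal case is $\tau(\hf)=2$ with some cover $\{a,b\}$. Decompose
\[
\hf=\hf_{ab}\sqcup\mathcal{B}_a\sqcup\mathcal{B}_b,\qquad \mathcal{B}_a=\{F\in\hf:a\in F,\,b\notin F\},
\]
and analogously for $\mathcal{B}_b$. Passing to $\mathcal{B}'_a:=\{F\setminus\{a\}:F\in\mathcal{B}_a\}$ and $\mathcal{B}'_b:=\{F\setminus\{b\}:F\in\mathcal{B}_b\}$ yields cross-intersecting $(k-1)$-uniform families in $\binom{[n]\setminus\{a,b\}}{k-1}$. For each $i\in[4]\setminus\{a,b\}$ (and there are at least two such $i$, with equality only in the sub-case $\{a,b\}\subset[4]$), the estimate $|\hf_{ab}(i)|\le\binom{n-3}{k-3}$ combined with $\binom{n-2}{k-2}-\binom{n-3}{k-3}=\binom{n-3}{k-2}$ converts $d(i)>D$ into
\[
|\mathcal{B}'_a(i)|+|\mathcal{B}'_b(i)|>\binom{n-3}{k-2}+\binom{n-4}{k-3}.
\]
The final step is a cross-intersecting simultaneous-star lemma: no pair of cross-intersecting $(k-1)$-uniform families on $n-2$ points can satisfy the above inequality at two distinct vertices simultaneously when $n\ge 6k$. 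The intuition is that star mass above this threshold at vertex $i$ forces one of $\mathcal{B}'_a,\mathcal{B}'_b$ to be essentially the star at $i$, after which the analogous condition at any second vertex becomes arithmetically infeasible.

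The main obstacle is this simultaneous-star lemma, which is a genuine two-vertex refinement of the cross-intersecting Hilton--Milner phenomenon and is not directly implied by Theorem \ref{thm-main-1} or Theorem \ref{thm-hz} applied to $\mathcal{B}'_a,\mathcal{B}'_b$ or their union. The hypothesis $n\ge 6k$ enters exactly here, as well as in the $\tau\ge 3$ case above, to furnish the quantitative slack needed for both.
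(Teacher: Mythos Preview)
Your proposal is a strategy outline rather than a proof, and both branches contain genuine gaps.

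In the $\tau(\hf)\ge 3$ branch you assert that one can choose three edges $F_1,F_2,F_3\in\hf(\bar 1)$ with $F_1\cap F_2\cap F_3=\emptyset$ and that an inclusion--exclusion count on $(k-1)$-sets meeting all three yields $\Delta(\hf)\le D$ for $n\ge 6k$. Neither step is carried out. The inclusion--exclusion bound depends on the pairwise intersection sizes $|F_i\cap F_j|$, which can be as large as $k-1$; without controlling these you only get a bound of order $k^2\binom{n-3}{k-3}$, and for $n=6k$ one has $\binom{n-2}{k-2}\approx 6\binom{n-3}{k-3}$, so a crude $k^2\binom{n-3}{k-3}$ estimate does \emph{not} beat $D$. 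You would need to exhibit three edges with additionally small pairwise overlaps, and nothing in the hypothesis $\tau\ge 3$ supplies that directly.

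In the $\tau(\hf)=2$ branch your reduction to the inequality $|\hb_a'(i)|+|\hb_b'(i)|>\binom{n-3}{k-2}+\binom{n-4}{k-3}$ is correct and clean, but the ``simultaneous-star lemma'' on which everything rests is, by your own admission, the main obstacle and is left unproved. Note that the extremal configuration $\hf=\hh_3$ (with cover $\{a,b\}=\{1,2\}$, say) produces $\hb_a'=\{A:A\cap\{3,4\}\ne\emptyset\}$ and $\hb_b'=\{B:\{3,4\}\subset B\}$, which attain \emph{equality} in your degree-sum inequality at both $i=3$ and $i=4$. So the lemma you need is sharp at two vertices simultaneously, and proving that strict inequality cannot hold twice is essentially the full difficulty of the theorem; it is not a routine consequence of the cross-intersecting Kruskal--Katona machinery.

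The paper's proof follows a completely different route and avoids splitting on $\tau$. It writes $f(Q)=|\hf(Q,[4])|$ for $Q\subset[4]$, separates $f(i)$ into $f_{\mathrm{small}}(i)=\sum_{i\in Q,|Q|\le 2}f(Q)$ and $f_{\mathrm{big}}(i)$, and shows that $f_{\mathrm{small}}(i)\le\binom{n-4}{k-2}$ for some $i$ suffices. The core device is an auxiliary graph $\hg$ on $[4]$ whose edges are the pairs $P$ with $f(P)\ge\binom{n-5}{k-3}+\binom{n-6}{k-3}$; Daykin's theorem forces $\hg$ to be a star or triangle, and four structural cases are then handled using Corollary~\ref{cor-hm} and the weighted cross-intersecting inequality of Lemma~\ref{lem-key}. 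This decomposition localises the problem to the $16$ quantities $f(Q)$ and replaces your unproved two-vertex lemma by a sequence of one-pair Kruskal--Katona estimates.
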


For the case $\ell \geq 4$ let us prove a slightly weaker bound.

\begin{thm}\label{thm-main-3}
Suppose that $\hf\subset \binom{[n]}{k}$ is intersecting, $4\leq \ell \leq k$ and $n>2\ell^2k$. Then
\begin{align}\label{ineq-d-ell2}
d_{\ell+1}(\hf)\leq \binom{n-2}{k-2} +\binom{n-\ell-1}{k-\ell}.
\end{align}
\end{thm}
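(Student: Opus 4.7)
I will argue by contradiction. Suppose $d_{\ell+1}(\hf) > \binom{n-2}{k-2} + \binom{n-\ell-1}{k-\ell}$. Put $y := x_1$ and $T' := \{x_2,\ldots,x_{\ell+1}\}$. The first step is a degree decomposition: for each $i \in \{2,\ldots,\ell+1\}$,
\[
d_i(\hf) \;=\; \bigl|\{F \in \hf : \{y,x_i\}\subset F\}\bigr| + a_i \;\leq\; \binom{n-2}{k-2} + a_i,
\]
where $a_i := |\{F \in \hf : y \notin F,\; x_i \in F\}|$. The hypothesis therefore forces $a_i > \binom{n-\ell-1}{k-\ell}$ for every $i \in \{2,\ldots,\ell+1\}$.

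Next I would partition $\hf(\bar y) = \ha \sqcup \hb \sqcup \he$, where $\ha = \{F : T'\subset F,\ y\notin F\}$, $\hb = \{F \in \hf(\bar y) : \emptyset\neq F\cap T' \subsetneq T'\}$, and $\he = \{F \in \hf(\bar y): F\cap T'=\emptyset\}$. Let $c:= |\ha| \leq \binom{n-\ell-1}{k-\ell}$ and $b_i := |\{F \in \hb : x_i\in F\}|$. Then $a_i = c + b_i$, and since every $F\in\hb$ satisfies $|F\cap T'|\leq \ell-1$, summing over $i$ yields
\[
\ell\!\left(\binom{n-\ell-1}{k-\ell}-c\right) \;<\; \sum_{i=2}^{\ell+1} b_i \;=\; \sum_{F\in\hb} |F\cap T'| \;\leq\; (\ell-1)|\hb|,
\]
so $|\hb| > \tfrac{\ell}{\ell-1}\bigl(\binom{n-\ell-1}{k-\ell}-c\bigr)$, in particular $|\hb|>0$.

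Now I would invoke Theorem~\ref{thm-f87} in two cases. In Case~A where $d_1(\hf) \leq d_1(\hh_\ell) = \binom{n-1}{k-1}-\binom{n-\ell-1}{k-1}$, Theorem~\ref{thm-f87} gives $|\hf|\leq|\hh_\ell|$, whence $|\hf(\bar y)| = c + |\hb| + |\he| \leq \binom{n-2}{k-1} - \binom{n-\ell-1}{k-1}$. In Case~B where $d_1(\hf)>d_1(\hh_\ell)$, the complement $N$ of the link of $y$ in $\binom{[n]\setminus\{y\}}{k-1}$ has $|N|<\binom{n-\ell-1}{k-1}$; a double-count of pairs $(F',G)$ with $F'\in\hf(\bar y)$, $G\in N$, $G\subset[n{-}1]\setminus F'$ (on one side using the $\binom{n-k-1}{k-1}$ such $G$ forced by cross-intersection, on the other using that the $F'$'s disjoint from a fixed $G$ form an intersecting family in $[n{-}1]\setminus G$) produces $|\hf(\bar y)|\leq |N|<\binom{n-\ell-1}{k-1}$, and the analogous count restricted to $F'\ni x_i$ gives $a_i\leq |N|$.

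In the concluding step I would extract the contradiction from combining the lower bound on $|\hb|$ with an upper bound on $|\hb|$ coming from its intersecting structure, using the hypothesis $n>2\ell^2 k$. If $c\geq 1$, pick any $F_0 \in \ha$; every $F \in \hb$ meets $F_0 = T'\cup B_0$, and since $F\cap T'\subsetneq T'$, at least one such meeting is governed by $B_0$, giving $|\hb|\leq \ell\binom{n-\ell-2}{k-\ell-1} + \ell(\ell-1)\binom{n-\ell-1}{k-2}/(n-\ell-1)$-type estimates that are negligible compared with $\binom{n-\ell-1}{k-\ell}$ once $n>2\ell^2 k$. If $c=0$, apply Hilton-Milner (or iterate Theorem~\ref{thm-f87}) to the intersecting family $\hb$, which cannot exceed $\tfrac{\ell}{\ell-1}\binom{n-\ell-1}{k-\ell}$ under the Case~A/B bounds on $|\hf(\bar y)|$ together with $n>2\ell^2 k$. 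Either way, a contradiction with the lower bound on $|\hb|$ closes the argument.

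\textbf{The main obstacle} is the refined upper bound on $|\hb|$ in the regime $c=0$: here $\hb$ is constrained only by pairwise intersection together with $|F\cap T'|\in\{1,\ldots,\ell-1\}$, and naive bounds give $|\hb|$ of order $\binom{n-2}{k-1}$, far above the target $\tfrac{\ell}{\ell-1}\binom{n-\ell-1}{k-\ell}$. The hypothesis $n>2\ell^2 k$ should be exactly what is needed to save this case via the cross-intersection estimate in Case~B (which forces $|\hb|<\binom{n-\ell-1}{k-1}$) combined with a pigeonhole over the $\ell$ possible ``missing'' coordinates $j\in T'$ with $x_j\notin F$ to reduce to an $(\ell-1)$-variant of the bound and iterate.
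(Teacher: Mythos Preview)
Your approach has a genuine gap, and the obstacle you flag at the end is in fact fatal for this line of attack.

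The core problem is one of \emph{scale}. Your only lower bound is $|\hb|>\tfrac{\ell}{\ell-1}\bigl(\binom{n-\ell-1}{k-\ell}-c\bigr)$, a quantity of order $\binom{n}{k-\ell}$. Every upper bound you produce on $|\hf(\bar y)|$ or $|\hb|$ (in Case A or Case B) is of order $\binom{n}{k-1}$ or $\binom{n}{k-2}$, larger by a factor roughly $(n/k)^{\ell-2}$. No contradiction can emerge from comparing these. Concretely, in your $c\ge 1$ step the claim ``since $F\cap T'\subsetneq T'$, at least one such meeting is governed by $B_0$'' is simply false: every $F\in\hb$ already meets $T'\subset F_0$, so the intersecting condition $F\cap F_0\neq\emptyset$ imposes nothing new, and you get no bound on $|\hb|$ this way. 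Moreover, the ``$\ell(\ell-1)\binom{n-\ell-1}{k-2}/(n-\ell-1)$-type'' term you write down is of order $\ell^2\binom{n}{k-3}$, which for $\ell\ge 4$ \emph{dominates} $\binom{n-\ell-1}{k-\ell}$ rather than being negligible compared to it. In the $c=0$ case the suggested pigeonhole-and-iterate does not change the order of magnitude: slicing $\hb$ by the missing coordinate $x_j\notin F$ gives $\ell$ pieces each still of potential size $\binom{n}{k-2}$.

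The paper's proof is organised very differently and precisely to keep all estimates at the right scale. It refines by the \emph{full} intersection pattern $f(Q)=|\hf(Q,[\ell+1])|$ for $Q\subset[\ell+1]$, splits $f(i)=f_1(i)+f_2(i,p)+f_3(i,p)$ according to $|Q|$, and shows (Claim~\ref{claim-6.2}) that it suffices to bound $f_1(i)$ or $f_1(i)+f_2(i,p)$ by $\tfrac12\binom{n-\ell-1}{k-2}$ or $\binom{n-\ell-1}{k-2}$ respectively. The remaining work is a case analysis on an auxiliary graph $\hg$ on $[\ell+1]$ recording which pairs $P$ have $f(P)$ large; because disjoint $P,P'$ give cross-intersecting families $\hf(P,[\ell+1]),\hf(P',[\ell+1])$, the Daykin and Kruskal--Katona theorems together with Lemma~\ref{lem-key} control the pairwise terms, and this is where the condition $n>2\ell^2k$ is actually used. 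Your decomposition, which singles out $y=x_1$ and lumps all of $\hf(\bar y)$ together, discards exactly the pairwise cross-intersecting structure among $\hf(P,[\ell+1])$ that drives the argument.
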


\begin{example}
For $n\geq k\geq r\geq 1$ define
\[
\hl_r =\left\{F\in \binom{[n]}{k}\colon |F\cap[2r-1]|\geq r\right\}.
\]
Clearly $\hl_r$ is intersecting and
\[
d_i(\hl_r)= \sum_{r-1\leq j\leq 2r-2} \binom{2r-2}{j} \binom{n-2r+1}{k-j-1} \mbox{ for }i=1,2,\ldots,2r-1.
\]
\end{example}
Recall that
\[
d_{4}(\hh_{3}) =\binom{n-2}{k-2}+\binom{n-4}{k-3}= \binom{n-5}{k-2}+ 4\binom{n-5}{k-3}+ 4\binom{n-5}{k-4}+ \binom{n-5}{k-5}.
\]
Then for $n<3k-2$,
\[
d_{4}(\hl_3)-d_{4}(\hh_{3}) = 2\binom{n-5}{k-3}-\binom{n-5}{k-2} =\left(\frac{2(k-2)}{n-k-2}-1\right) \binom{n-5}{k-2}>0.
\]
Thus Theorem \ref{thm-main-2} does not hold for $n<3k-2$.

Note also that
\begin{align*}
d_{5}(\hh_{4}) &=\binom{n-2}{k-2}+\binom{n-5}{k-4}=\binom{n-5}{k-2}+3\binom{n-5}{k-3}+4\binom{n-5}{k-4}+\binom{n-5}{k-5}.
\end{align*}
Then for $n<4k-4$,
\begin{align*}
d_{5}(\hl_3)-d_{5}(\hh_{4})=3\binom{n-5}{k-3}-\binom{n-5}{k-2} =\left(\frac{3(k-2)}{n-k-2}-1\right) \binom{n-5}{k-2}>0.
\end{align*}
Thus $d_5(\hl_3)>d_5(\hh_{4})$ for $n<4k-4$.

\section{Preliminaries and the proof of Theorem \ref{thm-main-1}}

For $\hf\subset \binom{[n]}{k}$, let us define the {\it  $\ell$th shadow of $\hf$} as
\[
\partial^{(\ell)} \hf =\left\{E\in\binom{[n]}{k-\ell}\colon \mbox{there exists } F
\in \hf\mbox{ such that } E\subset F \right\}.
\]
The quintessential Kruskal-Katona Theorem \cite{Kruskal, Katona66} gives the best possible lower bounds on $|\partial^{(\ell)} \hf|$ for given size of $\hf$.

Recall the {\it co-lexicographic order} $A <_{C} B$ for $A,B\in \binom{[n]}{k}$ defined by, $A<_C B$ if and only if $\max\{i\colon i\in A\setminus B\}<\max\{i\colon i\in B\setminus A\}$. Let $\hc(n,k,m)$  be the collection of the first $m$ sets $A\in \binom{[n]}{k}$ in the co-lexicographic order.

\vspace{6pt}
{\noindent\bf  Kruskal--Katona Theorem (\cite{Kruskal, Katona66}).}
If $\hf\subset \binom{[n]}{k}$, $|\hf|= m$, then
\begin{align}\label{ineq-kk-0}
|\partial^{(\ell)} \hf|\geq |\partial^{(\ell)} \hc(n,k,m)|.
\end{align}
\vspace{2pt}

 Recall the {\it lexicographic order} $A <_{L} B$ for $A,B\in \binom{[n]}{k}$ defined by, $A<_L B$ if and only if $\min\{i\colon i\in A\setminus B\}<\min\{i\colon i\in B\setminus A\}$. For $n>k>0$ and $\binom{n}{k}\geq m>0$, let $\hl(n,k,m)$ denote the collection of the first $m$ sets $A\in \binom{[n]}{k}$ in the lexicographic order.

We also need the following reformulation of the Kruskal--Katona Theorem, which was first appeared in  \cite{Hilton}. Two families $\ha$, $\hb$ are called {\it cross-intersecting} if $A\cap B\neq \emptyset$ for all $A\in \ha$, $B\in \hb$.

\vspace{6pt}
{\noindent\bf Kruskal--Katona Theorem (\cite{Kruskal, Katona66, Hilton}).}
 Let $n,a,b$ be positive integers, $n\geq a+b$. Suppose that $\ha\subset \binom{[n]}{a}$ and $\hb\subset \binom{[n]}{b}$ are cross-intersecting. Then $\hl(n,a,|\ha|)$ and $\hl(n,b,|\hb|)$ are cross-intersecting as well.
\vspace{6pt}

For $A\subset B\subset [n]$, define
\[
\hf(A,B) =\left\{F\setminus B\colon F\cap B=A\right\}.
\]
We use $\hf(A)$ to denote $\hf(A,A)$.
We also use $\hf(i,\bar{j})$ to denote $\hf(\{i\},\{i,j\})$,  $\hf(i,j)$ to denote $\hf(\{i,j\}, \{i,j\})$ and $\hf(\bar{i},\bar{j})$ to denote $\hf(\emptyset, \{i,j\})$.

Let us recall the following corollaries of the Kruskal--Katona Theorem.

\begin{cor}\label{cor-1}
Suppose that $n\geq 2k+2$. Let $\ha\subset \binom{[n]}{k}$ and $\hb\subset \binom{[n]}{k+2}$ be cross-intersecting. If $|\ha|\geq \binom{n-1}{k-1}+\binom{n-2}{k-2}$ then
\[
|\ha|+|\hb| \leq \binom{n}{k}.
\]
\end{cor}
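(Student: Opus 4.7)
The plan is to apply the Hilton reformulation of the Kruskal--Katona theorem to replace $\ha$ and $\hb$ by the lex-initial segments $\hl(n,k,|\ha|)$ and $\hl(n,k+2,|\hb|)$, then to peel off the element $1$ using the lower bound on $|\ha|$, and finally to close with a second shifting step in the reduced universe.

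Since $|\ha|\geq\binom{n-1}{k-1}$, the lex-initial family $\ha$ contains every $k$-subset of $[n]$ passing through the element $1$. I first claim $1\in B$ for every $B\in\hb$: otherwise $|[2,n]\setminus B|=n-k-3\geq k-1$ by $n\geq 2k+2$, so one can choose a $(k-1)$-set $T\subset [2,n]\setminus B$, and then $\{1\}\cup T\in\ha$ is disjoint from $B$, contradicting the cross-intersecting property.

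Writing $\ha=\ha_1\sqcup\ha^{\bullet}$ with $\ha_1=\{A\in\ha:1\in A\}$ of size $\binom{n-1}{k-1}$ and $\ha^{\bullet}=\ha\cap\binom{[2,n]}{k}$ with $|\ha^{\bullet}|\geq\binom{n-2}{k-2}$, and setting $\hb^{\bullet}=\{B\setminus\{1\}:B\in\hb\}\subset\binom{[2,n]}{k+1}$, the pair $(\ha^{\bullet},\hb^{\bullet})$ is cross-intersecting in $[2,n]$ (pairs both containing $1$ automatically intersect) and both are lex-initial in their respective universes. By the Pascal identity $\binom{n}{k}=\binom{n-1}{k-1}+\binom{n-1}{k}$, the target inequality reduces to
\[
|\ha^{\bullet}|+|\hb^{\bullet}|\leq \binom{n-1}{k}.
\]

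To establish this reduced inequality I would repeat the peel-off reasoning in $[2,n]$. Since $\binom{n-2}{k-2}<\binom{n-2}{k-1}$ (as $n>2k$), the lex-initial $\ha^{\bullet}$ lies inside the lex-block of $k$-sets in $[2,n]$ that contain the element $2$; and the same disjointness argument (applied in $[2,n]$, using $n\geq 2k+2$ once more) forces $B^{\bullet}\cap\{2,3\}\neq\emptyset$ for every $B^{\bullet}\in\hb^{\bullet}$. Splitting $\hb^{\bullet}$ according to whether $2\in B^{\bullet}$ yields a residual cross-intersection in $[3,n]$ between $\{A\setminus\{2\}:A\in\ha^{\bullet}\}\subset\binom{[3,n]}{k-1}$ and $\{B^{\bullet}\in\hb^{\bullet}:2\notin B^{\bullet}\}\subset\binom{[3,n]}{k+1}$, to which Kruskal--Katona and another round of Pascal bookkeeping apply. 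The main obstacle is this final bookkeeping: the bound $|\ha^{\bullet}|\geq\binom{n-2}{k-2}$ is \emph{just} strong enough to impose meaningful structure on $\hb^{\bullet}$, but weaker than what would allow a verbatim iteration of the original argument, so one must carefully combine the partial forcing (every $A\in\ha^{\bullet}$ contains $2$; every $B^{\bullet}\in\hb^{\bullet}$ meets $\{2,3\}$) with direct counting of the $(k+1)$-subsets of $[2,n]$ compatible with a lex-initial $\ha^{\bullet}$ to obtain precisely $\binom{n-1}{k}$.
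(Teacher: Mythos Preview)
Your peel-off strategy is sound in spirit and the first reduction (forcing $1\in B$ for every $B\in\hb$ and passing to $\ha^\bullet,\hb^\bullet$) is carried out correctly. The trouble is that the hypothesis $|\ha|\geq\binom{n-1}{k-1}+\binom{n-2}{k-2}$ is too weak for the conclusion to hold at all: for $n=8$, $k=2$, take $\ha=\hl(8,2,8)$ (the seven pairs through $1$ together with $\{2,3\}$) and $\hb=\hl(8,4,25)$ (the $4$-sets containing $1$ and meeting $\{2,3\}$); these are cross-intersecting, $|\ha|=8=\binom{7}{1}+\binom{6}{0}$, yet $|\ha|+|\hb|=33>28=\binom{8}{2}$. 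So the ``final bookkeeping'' you flag as the main obstacle is not a bookkeeping issue --- it genuinely cannot be completed.

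What has happened is a typo in the statement: the intended hypothesis is $|\ha|\geq\binom{n-1}{k-1}+\binom{n-2}{k-1}$, and the paper's own proof silently uses this (it writes $|\ha|=\binom{n-1}{k-1}+\binom{n-2}{k-1}+|\ha(\bar 1,\bar 2)|$, which requires the lex-initial $\ha$ to contain every $k$-set through $1$ or through $2$; the application in the proof of Proposition~\ref{prop-4.1} also supplies exactly this stronger bound). With the corrected hypothesis your plan and the paper's argument coincide: after peeling off $1$ one has $|\ha^\bullet|\geq\binom{n-2}{k-1}$, which forces $2\in B^\bullet$ for every $B^\bullet$ (not merely $B^\bullet\cap\{2,3\}\neq\emptyset$); peeling off $2$ as well leaves cross-intersecting families $\ha(\bar 1,\bar 2),\,\hb(1,2)\subset\binom{[3,n]}{k}$ of the \emph{same} uniformity on a ground set of size $n-2\geq 2k$, and the classical inequality $|\ha(\bar 1,\bar 2)|+|\hb(1,2)|\leq\binom{n-2}{k}$ finishes at once. (A small aside: your assertion that $\ha^\bullet$ ``lies inside'' the block through $2$ is an \emph{upper} bound on $|\ha^\bullet|$, which you never established; under the corrected hypothesis that case distinction evaporates.)
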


\begin{proof}
By the Kruskal--Katona Theorem, we may assume that  $\ha=\hl(n,k,|\ha|)$ and $\hb=\hl(n,k+2,|\hb|)$. By  $|\ha|\geq \binom{n-1}{k-1}+\binom{n-2}{k-2}$, we infer that $|\ha|=\binom{n-1}{k-1}+\binom{n-2}{k-1}+|\ha(\bar{1},\bar{2})|$ and $|\hb|=|\hb(1,2)|$. Since $\ha(\bar{1},\bar{2})$ and $\hb(1,2)$ are cross-intersecting,
\[
|\ha(\bar{1},\bar{2})|+|\hb(1,2)|\leq \binom{n-2}{k}.
\]
Thus $|\ha|+|\hb| \leq \binom{n}{k}$ follows.
\end{proof}

\begin{cor}\label{cor-hm}
Suppose $n\geq a+b$. Let $\ha\subset \binom{[n]}{a}$ and $\hb\subset \binom{[n]}{b}$ be cross-intersecting families. If $|\ha| \geq \binom{n-2}{a-2}+\binom{n-3}{a-2}+\ldots+\binom{n-d}{a-2}$ for some $2\leq d\leq b+1$, then
\[
|\hb| \leq \binom{n-1}{b-1}+\binom{n-d}{b-d+1}.
\]
Moreover, if $|\ha| \geq \binom{n-\ell}{a-\ell}$ for some $1\leq \ell\leq a$, then
\[
|\hb| \leq \binom{n}{b}-\binom{n-\ell}{b}.
\]
\end{cor}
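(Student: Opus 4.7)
The plan is to deduce both statements from the Hilton form of the Kruskal--Katona theorem recalled above, after explicitly identifying the relevant lex initial segments of $\binom{[n]}{a}$. That theorem lets me assume without loss of generality that $\ha=\hl(n,a,|\ha|)$ and $\hb=\hl(n,b,|\hb|)$, since these replacements preserve both cardinalities and cross-intersection. The work then reduces to (i) recognising what $\hl(n,a,m)$ looks like for the given $m$, and (ii) bounding the largest $\hb$ that can cross-intersect it.

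For the first statement I claim
\[
\hl\!\left(n,a,\sum_{j=2}^{d}\binom{n-j}{a-2}\right)=\left\{A\in\binom{[n]}{a}:1\in A,\ A\cap[2,d]\neq\emptyset\right\}.
\]
This comes from the nested lex structure inside $\{A:1\in A\}$: the first $\binom{n-2}{a-2}$ such sets contain $\{1,2\}$, the next $\binom{n-3}{a-2}$ contain $\{1,3\}$ but not $2$, and in general the $j$-th block (for $j=2,\dots,d$) consists of sets with $1\in A$, $[2,j-1]\cap A=\emptyset$, and $j\in A$. Now, for $B\in\hb$ cross-intersecting this family: if $1\in B$ the intersection contains $1$, giving at most $\binom{n-1}{b-1}$ such $B$; if $1\notin B$ but $[2,d]\not\subset B$, pick $j=\min([2,d]\setminus B)$ and any $(a-2)$-subset $S\subset[2,n]\setminus(B\cup\{j\})$, which is possible because $|[2,n]\setminus(B\cup\{j\})|\geq n-b-2\geq a-2$ by the hypothesis $n\geq a+b$. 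Then $A=\{1,j\}\cup S$ lies in the family and is disjoint from $B$, a contradiction. Hence every $B\in\hb$ with $1\notin B$ must satisfy $[2,d]\subset B$, giving at most $\binom{n-d}{b-d+1}$ additional sets (well-defined because $d\leq b+1$). Adding the two bounds yields the claim.

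The second statement is more immediate: because $[\ell]$ is the $<_L$-minimal $\ell$-subset of $[n]$, the family $\{A:[\ell]\subset A\}$ is automatically a lex initial segment of size $\binom{n-\ell}{a-\ell}$, so $\ha$ cross-intersects it. If some $B\in\hb$ missed $[\ell]$ entirely, one could take $A=[\ell]\cup S$ with $S$ an $(a-\ell)$-subset of $[\ell+1,n]\setminus B$ (which exists since $|[\ell+1,n]\setminus B|\geq n-\ell-b\geq a-\ell$ by $n\geq a+b$), producing $A\in\ha$ disjoint from $B$. Hence every $B\in\hb$ meets $[\ell]$, giving $|\hb|\leq\binom{n}{b}-\binom{n-\ell}{b}$.

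The principal technical point, and the only real obstacle, is part (i) for the first statement: writing the cardinality $\sum_{j=2}^{d}\binom{n-j}{a-2}$ as $|\{A:1\in A,A\cap[2,d]\neq\emptyset\}|$ and checking it forms a lex initial segment, which one can verify by the Pascal telescoping $\binom{n-j}{a-2}=\binom{n-j+1}{a-1}-\binom{n-j}{a-1}$ together with the nesting argument above. Once this is secured, the disjointness construction is elementary and uses the hypothesis $n\geq a+b$ exactly once.
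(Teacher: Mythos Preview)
Your proof is correct and follows essentially the same route as the paper: reduce via the Hilton form of Kruskal--Katona to lex initial segments, identify the initial segment of size $\sum_{j=2}^{d}\binom{n-j}{a-2}$ (respectively $\binom{n-\ell}{a-\ell}$) as $\{A:1\in A,\ A\cap[2,d]\neq\emptyset\}$ (respectively $\{A:[\ell]\subset A\}$), and conclude that $\hb$ is contained in $\{B:1\in B\}\cup\{B:[2,d]\subset B\}$ (respectively $\{B:B\cap[\ell]\neq\emptyset\}$). Your explicit disjointness construction in place of the paper's bare containment assertion is a cosmetic difference; the one phrasing slip (``so $\ha$ cross-intersects it'' in the second part, where you mean that $\hb$ cross-intersects this subfamily of $\ha$) does not affect the argument.
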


\begin{proof}
By the Kruskal--Katona Theorem, we may assume $\ha=\hl(n,a,|\ha|)$ and  $\hb=\hl(n,b,|\hb|)$. If  $|\ha| \geq \binom{n-2}{a-2}+\binom{n-3}{a-2}+\ldots+\binom{n-d}{a-2}$, then
\[
\left\{F\in \binom{[n]}{a}\colon \{1, i\} \subset F \mbox{ for some }i\in[2,d]\right\}\subset \ha.
\]
As $\ha$, $\hb$ are cross-intersecting,
\[
\hb\subset \left\{F\in \binom{[n]}{b}\colon 1\in  F \mbox{ or } [2,d]\subset F\right\}.
\]
Consequently,
\[
|\hb| \leq \binom{n-1}{b-1}+\binom{n-d}{b-d+1}.
\]

If  $|\ha| \geq \binom{n-\ell}{a-\ell}$, then
\[
\left\{F\in \binom{[n]}{a}\colon [\ell] \subset F\right\}\subset \ha.
\]
As $\ha$, $\hb$ are cross-intersecting,
\[
\hb\subset \left\{F\in \binom{[n]}{b}\colon F\cap  [\ell]\neq \emptyset\right\}.
\]
Thus,
\[
|\hb| \leq \binom{n}{b}-\binom{n-\ell}{b}.
\]
\end{proof}

Daykin \cite{daykin} showed that the $t=1$ case of Erd\H{o}s-Ko-Rado follows from the Kruskal-Katona Theorem. The following result can be extracted from his proof.

\vspace{3pt}
{\noindent\bf  Daykin Theorem (\cite{daykin}).}
Let $n>2k$, $\ha, \hb\subset \binom{[n]}{k}$. If $\ha$ and $\hb$ are  cross-intersecting then
\[
\min\{|\ha|,|\hb|\} \leq \binom{n-1}{k-1}.
\]
In case of equality $\ha=\hb=\{F\in \binom{[n]}{k}\colon x\in F\}$ for some $x\in [n]$.
\vspace{3pt}

\begin{proof}
Suppose that $|\ha|\geq |\hb|\geq \binom{n-1}{k-1}$. We need to show  $|\ha|=\binom{n-1}{k-1}$. Assume indirectly $|\ha|>\binom{n-1}{k-1}$ and fix $\ha_0\subset \ha$, $|\ha_0|=\binom{n-1}{k-1}+1$. The initial segment $\hl(n,k,\binom{n-1}{k-1}+1)$ of the lexicographic order consists of the full  star of 1 and the set $[2,k+1]$. By the Kruskal-Katona Theorem  every member of $\hl(n,k,|\hb|)$ intersects each of these sets. Hence $|\hb|\leq \binom{n-1}{k-1}-\binom{n-k-1}{k-1}<\binom{n-1}{k-1}$, the desired contradiction.

Using the uniqueness part of the Kruskal-Katona Theorem in the case $|\ha|=\binom{n-1}{k-1}$ (cf. e.g. \cite{F84} or \cite{FG}), it follows that $|\ha|=|\hb|=\binom{n-1}{k-1}$ holds iff $\ha$ and $\hb$ are identical full stars.
\end{proof}

\begin{proof}[Proof of Theorem \ref{thm-main-1}]
Let $\hf\subset \binom{[n]}{k}$ be an intersecting family with $d_1(\hf)=|\hf(1)|$, $d_2(\hf)=|\hf(2)|$.
Note that
\[
d_1(\hf) =|\hf(1,2)|+|\hf(1,\bar{2})|,\  d_2(\hf) =|\hf(1,2)|+|\hf(\bar{1},2)|.
\]
Hence $|\hf(\bar{1},2)|\leq |\hf(1,\bar{2})|$. Since $\hf$ is intersecting, $\hf(\bar{1},2)$ and $\hf(1,\bar{2})$ are cross-intersecting. By the Daykin Theorem, we get
\[
|\hf(\bar{1},2)|=\min\{|\hf(\bar{1},2)|,|\hf(1,\bar{2})|\} \leq \binom{(n-2)-1}{(k-1)-1}.
\]
Thus  $d_2(\hf)\leq \binom{n-2}{k-2}+\binom{n-3}{k-2}$ follows. In case of equality, $|\hf(1,2)|=\binom{n-2}{k-2}$ and $\hf(\bar{1},2)$, $\hf(1,\bar{2})$ are identical full stars. Without loss of generality assume that $\hf(\bar{1},2)$, $\hf(1,\bar{2})$ are both  full stars of center $3$. Then $\hf$ is isomorphic to $\hh_2$ and $d_1(\hf) =d_2(\hf) =d_3(\hf)$ follows.
\end{proof}

\section{Saturation, minimal transversals and the shifted case}

When considering the maximum of the $i$th largest degree of a $t$-intersecting family, we may always assume that $\hf$ is saturated. That is, the addition of any new $k$-sets to $\hf$ would destroy the $t$-intersecting property.

Let $\binom{[n]}{\leq k}$ denote the collection of all subsets of $[n]$ with size at most $k$.  For $\hg\subset \binom{[n]}{\leq k}$, define
\[
\langle \hg \rangle = \left\{F\in \binom{[n]}{k}\colon \mbox{ there exists }G\in \hg \mbox{ such that }G\subset F \right\}.
\]

For a $t$-intersecting family $\hf$, define the {\it $t$-transversal family}
\[
\hht_t(\hf) := \left\{T\subset [n]\colon |T|\leq k,\ |T\cap F|\geq t \mbox{ for all } F\in \hf \right\}.
\]
Define the {\it basis} $\hb_t(\hf)$ as the family of all minimal members (for containment) of $\hht_t(\hf)$, that is , the collection of all {\it minimal $t$-transversals}.
The {\it $t$-covering number} $\tau_t(\hf)$ is defined by $\tau_t(\hf) =\min \{ |T|\colon T\in \hht_t(\hf)\}$. For $t=1$, we often write $\hht(\hf)$, $\hb(\hf)$ and $\tau(\hf)$ for $\hht_1(\hf)$, $\hb_1(\hf)$ and $\tau_1(\hf)$, respectively. The 1-covering number is simply called the {\it covering number}.

\begin{lem}[\cite{FW22}]\label{lem-3.3}
Suppose that $\hf\subset \binom{[n]}{k}$ is a saturated $t$-intersecting family and $\hb=\hb_t(\hf)$.
Then  $\hb$ is $t$-intersecting with $\langle \hb \rangle = \hf$.
\end{lem}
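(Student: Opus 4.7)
The plan is to prove the two assertions separately: first that $\langle\hb\rangle=\hf$, and second that $\hb$ is $t$-intersecting. Throughout, I will use the convention $n>2k-t$ for $t$-intersecting families stated in the introduction.

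For the equality $\langle\hb\rangle=\hf$, the inclusion $\hf\subseteq\langle\hb\rangle$ is immediate: each $F\in\hf$ satisfies $|F\cap F'|\geq t$ for every $F'\in\hf$ and has size $k$, hence $F\in\hht_t(\hf)$; by minimality of elements of $\hb$, $F$ must contain some $B\in\hb$. For the reverse inclusion, take any $F\in\binom{[n]}{k}$ containing some $B\in\hb$. Since $B$ is a $t$-transversal, $|F\cap F'|\geq|B\cap F'|\geq t$ for every $F'\in\hf$, so $\hf\cup\{F\}$ is $t$-intersecting, and saturation of $\hf$ then forces $F\in\hf$.

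For the $t$-intersecting property of $\hb$, suppose for contradiction that there exist $B_1,B_2\in\hb$ with $|B_1\cap B_2|\leq t-1$. The key step is to extend $B_1$ and $B_2$ simultaneously to $k$-sets $F_1,F_2\in\binom{[n]}{k}$ whose overlap is exactly $B_1\cap B_2$. Writing $F_i=B_i\cup X_i$ with $X_i\subseteq[n]\setminus(B_1\cup B_2)$, $X_1\cap X_2=\emptyset$, and $|X_i|=k-|B_i|$, such a choice exists precisely when
\[
(k-|B_1|)+(k-|B_2|)\leq n-|B_1\cup B_2|=n-|B_1|-|B_2|+|B_1\cap B_2|,
\]
that is, $n\geq 2k-|B_1\cap B_2|$. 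Our assumption $|B_1\cap B_2|\leq t-1$ together with the standing hypothesis $n>2k-t$ gives exactly this inequality. By the first part, $F_1,F_2\in\hf$; since $\hf$ is $t$-intersecting, $|F_1\cap F_2|\geq t$. But $F_1\cap F_2=B_1\cap B_2$ by construction, contradicting $|B_1\cap B_2|\leq t-1$.

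The only delicate point is the simultaneous extension in the second part: one must enlarge $B_1$ and $B_2$ to $k$-sets without creating any new shared element. The counting inequality above shows that the standard $t$-intersecting range $n>2k-t$ supplies exactly the room needed, so no additional hypothesis on $n$ is required.
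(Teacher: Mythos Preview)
The paper does not prove this lemma (it is quoted from \cite{FW22}), so there is no in-paper argument to compare against. Your proof of $\langle\hb\rangle=\hf$ is correct. In the $t$-intersecting part, however, the final implication is wrong: from $|B_1\cap B_2|\leq t-1$ and $n>2k-t$ you cannot deduce $n\geq 2k-|B_1\cap B_2|$, since $|B_1\cap B_2|\leq t-1$ makes the right-hand side \emph{at least} $2k-t+1$, not at most. For instance with $t=2$, $n=2k-1$ and $B_1\cap B_2=\emptyset$, the disjoint extensions $X_1,X_2$ you ask for would require more room in $[n]\setminus(B_1\cup B_2)$ than is available.

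The repair is simple: you only need $|F_1\cap F_2|\leq t-1$, not $F_1\cap F_2=B_1\cap B_2$. Allow $X_1,X_2\subseteq[n]\setminus(B_1\cup B_2)$ to overlap; then $F_1\cap F_2=(B_1\cap B_2)\cup(X_1\cap X_2)$, and one can make $|X_1\cap X_2|$ as small as $\max\bigl(0,\,2k-n-|B_1\cap B_2|\bigr)$. This yields $|F_1\cap F_2|=\max\bigl(|B_1\cap B_2|,\,2k-n\bigr)\leq t-1$, using precisely $|B_1\cap B_2|\leq t-1$ and $n>2k-t$. With this adjustment your argument is complete.
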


Let us recall the shifting method, which was invented by Erd\H{o}s, Ko and Rado \cite{ekr}. For $\hf\subset \binom{[n]}{k}$ and $1\leq i<j\leq n$, define the shift
$$S_{ij}(\hf)=\left\{S_{ij}(F)\colon F\in\hf\right\},$$
where
$$S_{ij}(F)=\left\{
                \begin{array}{ll}
                 F':= (F\setminus\{j\})\cup\{i\}, & \mbox{ if } j\in F, i\notin F \text{ and } F'\notin \hf; \\[5pt]
                  F, & \hbox{otherwise.}
                \end{array}
              \right.
$$
It is well known (cf. \cite{F87}) that shifting preserves the  $t$-intersecting property.

Let us define the {\it shifting partial order} $\prec$. For two $k$-sets $A$ and $B$ where $A=\{a_1,\ldots,a_k\}$, $a_1<\ldots<a_k$ and $B=\{b_1,\ldots,b_k\}$, $b_1<\ldots<b_k$ we say that $A$ precedes $B$ and denote it by $A\prec B$ if $a_i\leq b_i$ for all $1\leq i\leq k$.

A family $\hf\subset \binom{[n]}{k}$ is called {\it shifted} if $A\prec B$ and $B\in \hf$ always imply $A\in \hf$. By repeated shifting one can transform an arbitrary $k$-graph into a shifted $k$-graph with the same number of edges.

We say that a family $\hf\subset \binom{[n]}{k}$ is {\it saturated $t$-intersecting} if $\hf$ is $t$-intersecting  and any addition of an extra $k$-set would destroy the $t$-intersecting property.

\begin{lem}[\cite{F78}]\label{lem-2.2}
Suppose that $\hf\subset \binom{[n]}{k}$ is shifted,  saturated and $t$-intersecting. Let $\hb=\hb_t(\hf)$.  Then $\hb \subset 2^{[2k-t]}$.
\end{lem}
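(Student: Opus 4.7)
My plan is a one-shot shifting argument. Assume for contradiction that some $B\in\hb$ has an element exceeding $2k-t$, set $b:=\max(B)$, and use the minimality of $B$: since $B\setminus\{b\}$ is not a $t$-transversal, some $F\in\hf$ satisfies $|F\cap(B\setminus\{b\})|<t$, which combined with $|F\cap B|\geq t$ pins down $b\in F$ and $|F\cap B|=t$. (Saturation does not really enter here; only shiftedness and $t$-intersectingness matter.)

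Next I count the elements of $F\cup B$ lying strictly below $b$. Since $b\in F\cap B$, inclusion-exclusion gives
\[
|(F\cup B)\cap[1,b-1]|=|F|+|B|-|F\cap B|-1=k+|B|-t-1\leq 2k-t-1.
\]
If $b\geq 2k-t+1$ this is strictly less than $b-1$, so some $j\in[1,b-1]\setminus(F\cup B)$ exists.

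Finally, perform the single shift $F':=(F\setminus\{b\})\cup\{j\}$. Because $j<b$ and $j\notin F$, sorting both sets in increasing order yields $F'\prec F$; shiftedness of $\hf$ then gives $F'\in\hf$. But $|F'\cap B|=|F\cap B|-1+[j\in B]=t-1$, contradicting that $B$ is a $t$-transversal of $\hf$. Hence $\max(B)\leq 2k-t$, i.e.\ $B\subset[2k-t]$.

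The main conceptual point is the counting: the threshold $2k-t$ comes out exactly because $|F|\leq k$, $|B|\leq k$, and the single overlap correction from $b\in F\cap B$. If one tried to push for $B\subset[2k-t-1]$ the slack $(b-1)-(|F\cup B|-1)\geq 1$ would vanish and the argument would collapse. The remaining coordinatewise check behind $F'\prec F$ is entirely routine.
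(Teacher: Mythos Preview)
Your argument is essentially the paper's own proof: pick a large element of $B$, invoke minimality of $B$ to find a witness $F\in\hf$ with $|F\cap B|=t$ and that element in $F$, count to locate a free smaller index, and shift $F$ down to contradict the transversal property. One small slip: the displayed equality $|(F\cup B)\cap[1,b-1]|=|F|+|B|-|F\cap B|-1$ should be the inequality $\leq$, since $F$ may contain elements larger than $b=\max B$; this only strengthens the bound, so the conclusion $\leq 2k-t-1<b-1$ and the rest of the argument are unaffected.
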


\begin{proof}
Suppose the contrary and let $B\in \hb$ with $B\not\subset [2k-t]$. Choose $j\in B$ with $j>2k-t$. By the minimality of $\hb$, there exists $F\in \hf$ with $|F\cap (B\setminus\{j\})|\leq t-1$.

Then $|F\cap B|\geq t$ implies $j\in F$ and $|F\cap (B\setminus \{j\})|=|(F\setminus \{j\})\cap (B\setminus \{j\})|=t-1$. Thus,
\[
|(F\setminus \{j\})\cup (B\setminus \{j\})| =|F\setminus \{j\}|+|B\setminus \{j\}|- |(F\setminus \{j\})\cap (B\setminus \{j\})| = 2(k-1)-(t-1)=2k-1-t.
\]
Hence we can fix $i\leq 2k-t$ with $i\notin F\cup B$. Consequently by shiftedness $F'=(F\setminus \{j\})\cup\{i\}\in \hf$ and $|F'\cap B|=t-1$, a contradiction.
\end{proof}

%For $\hf\subset \binom{[n]}{k}$, the {\it minimum degree} $\delta(\hf)$ is defined as the minimum of $|\hf(i)|$ over all $i\in [n]$. Let us recall a fundamental result of Huang and Zhao.
%
%\begin{thm}[\cite{HZ}]\label{thm-hz}
%For $n>2k$, if $\hf\subset \binom{[n]}{k}$ is intersecting, then
%\[
%\delta(\hf) \leq \binom{n-2}{k-2}.
%\]
%\end{thm}

\begin{lem}[\cite{F87}]\label{lem-2.4}
Let $\hf\subset \binom{[n]}{k}$ be a shifted $t$-intersecting family. Then $\hf(i)$ is $t$-intersecting for all $i>2k-t$.
\end{lem}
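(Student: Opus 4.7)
The plan is a direct proof by contradiction. Suppose some $i>2k-t$ has $\hf(i)$ not $t$-intersecting. Then there exist $F_1,F_2\in\hf$, both containing $i$, such that $|(F_1\setminus\{i\})\cap(F_2\setminus\{i\})|\leq t-1$. Since $\hf$ itself is $t$-intersecting and $i\in F_1\cap F_2$, this forces $|F_1\cap F_2|=t$, with $i$ being one of the $t$ common elements. Consequently
\[
|F_1\cup F_2|=|F_1|+|F_2|-|F_1\cap F_2|=2k-t.
\]

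Next I would exploit the hypothesis $i>2k-t$. Because $i\in F_1\cup F_2$ and $i>2k-t$, at most $2k-t-1$ elements of $F_1\cup F_2$ can lie in $[2k-t]$. Hence there is some $j\in[2k-t]$ with $j\notin F_1\cup F_2$; in particular $j<i$.

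The key move is a single application of shiftedness. Set $F_1':=(F_1\setminus\{i\})\cup\{j\}$. Writing $F_1=\{a_1<\cdots<a_k\}$ with $a_m=i$ and inserting $j<i$, the sorted sequence of $F_1'$ dominates $F_1'$ coordinatewise below $F_1$, so $F_1'\prec F_1$, and shiftedness of $\hf$ gives $F_1'\in\hf$. However, since $j\notin F_2$,
\[
F_1'\cap F_2=(F_1\cap F_2)\setminus\{i\},
\]
which has only $t-1$ elements, contradicting that $\hf$ is $t$-intersecting and completing the proof.

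I do not expect any real obstacle: the argument is entirely driven by the counting step that produces $j\in[2k-t]\setminus(F_1\cup F_2)$, which is the single place where the threshold $i>2k-t$ is used. The structure is parallel to Lemma \ref{lem-2.2} in the excerpt, where a similar shift of a forbidden element $j>2k-t$ down to some $i\leq 2k-t$ outside a union is used to contradict minimality of a basis set; here the same trick contradicts $t$-intersectingness after removing~$i$.
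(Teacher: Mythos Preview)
Your argument is correct. The paper does not supply its own proof of this lemma; it is simply cited from \cite{F87}. Your contradiction via a single shift $F_1\mapsto (F_1\setminus\{i\})\cup\{j\}$ with $j\in[2k-t]\setminus(F_1\cup F_2)$ is exactly the standard proof and, as you observe, mirrors the mechanism used in the paper's proof of Lemma~\ref{lem-2.2}. One cosmetic remark: the sentence ``the sorted sequence of $F_1'$ dominates $F_1'$ coordinatewise below $F_1$'' is garbled; you mean that replacing the entry $i$ by the smaller element $j$ shifts the intermediate coordinates down by one index, so each coordinate of $F_1'$ is at most the corresponding coordinate of $F_1$, giving $F_1'\prec F_1$.
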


For a shifted $t$-intersecting family, we can prove the following result.

\begin{thm}\label{thm-2.5}
Let  $n> (t+1)(k-t)$. If $\hf\subset \binom{[n]}{k}$ is a shifted $t$-intersecting family, then
\[
d_{2k-t+1}(\hf) \leq \binom{n-t-1}{k-t-1}.
\]
\end{thm}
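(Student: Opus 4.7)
The plan is to derive Theorem \ref{thm-2.5} as an almost immediate consequence of shiftedness together with Lemma \ref{lem-2.4} and the exact Erd\H{o}s--Ko--Rado theorem.

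First I would record the standard fact that in a shifted family the degrees are already sorted in decreasing order: for $i<j$ the map $F\mapsto (F\setminus\{j\})\cup\{i\}$ takes $\{F\in\hf:\, j\in F,\ i\notin F\}$ injectively into $\{F\in\hf:\, i\in F,\ j\notin F\}$ by shiftedness, so $|\hf(j)|\leq |\hf(i)|$. Consequently $d_i(\hf)=|\hf(i)|$ for every $i$, and in particular $d_{2k-t+1}(\hf)=|\hf(2k-t+1)|$.

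Next I would apply Lemma \ref{lem-2.4} at $i_0:=2k-t+1$. Since $i_0>2k-t$, the lemma guarantees that $\hf(i_0)\subset \binom{[n]\setminus\{i_0\}}{k-1}$ is a $t$-intersecting $(k-1)$-graph on a ground set of size $n-1$. This is the crucial step: a priori, taking a link only yields a $(t-1)$-intersecting family, and it is shiftedness combined with $i_0>2k-t$ that upgrades this to a genuine $t$-intersecting family.

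Finally I would invoke the exact Erd\H{o}s--Ko--Rado theorem \eqref{ineq-ekr} with parameters $(n-1,k-1,t)$ in place of $(n,k,t)$. Its hypothesis $n-1\geq (t+1)((k-1)-t+1)=(t+1)(k-t)$ is precisely the standing assumption $n>(t+1)(k-t)$. Applying \eqref{ineq-ekr} then gives
\[
d_{2k-t+1}(\hf)=|\hf(i_0)|\leq \binom{(n-1)-t}{(k-1)-t}=\binom{n-t-1}{k-t-1},
\]
as claimed. There is no real obstacle beyond bookkeeping; all of the substantive work has been absorbed into Lemma \ref{lem-2.4}.
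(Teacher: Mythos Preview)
Your proof is correct and follows essentially the same route as the paper: apply Lemma~\ref{lem-2.4} to see that the link at a vertex beyond $2k-t$ is $t$-intersecting, and then bound its size by the exact Erd\H{o}s--Ko--Rado theorem with parameters $(n-1,k-1,t)$. Your setup is in fact slightly cleaner: the paper passes first to a saturated family and invokes Lemma~\ref{lem-2.2} to argue that all links $\hf(i)$ with $i>2k-t$ are isomorphic, whereas you bypass both by using directly that shiftedness forces $|\hf(1)|\geq\cdots\geq|\hf(n)|$, so that $d_{2k-t+1}(\hf)=|\hf(2k-t+1)|$.
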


\begin{proof}
Let $\hf\subset \binom{[n]}{k}$ be shifted,  saturated and $t$-intersecting and let $\hb=\hb_t(\hf)$ be its basis. Then by Lemma \ref{lem-2.2} we infer that
$\hb \subset 2^{[2k-t]}$.  By saturatedness for every $i\in [2k-t+1,n]$, the link $\hf(i)$ is isomorphic, that is,
\[
\hf(i) =\left\{E\subset \binom{[n]\setminus \{i\}}{k-1}\colon \mbox{ there exists $B\in \hb$ such that } B\subset E\right\}.
\]
By Lemma \ref{lem-2.4}, $\hf(i)$ is $t$-intersecting for  $i> 2k-t$. Then by the Exact Erd\H{o}s-Ko-Rado Theorem,  for $n-1\geq (t+1)(k-t)$ we have
\[
d_i(\hf)=|\hf(i)|\leq \binom{(n-1)-t}{(k-1)-t} = \binom{n-t-1}{k-t-1}.
\]
\end{proof}

\section{Sharpening the Huang--Zhao Theorem}

By the Huang--Zhao Theorem we know that $d_n(\hf)\leq \binom{n-2}{k-2}$ for all intersecting families $\hf\subset \binom{[n]}{k}$, $n>2k$. As we have shown in Theorem \ref{thm-2.5}, in case of shifted families $d_{2k}(\hf)\leq \binom{n-2}{k-2}$, that is, already the $2k$th degree is quit small. In this section we prove  that for $n\geq 6k-9$,  $d_{2k+1}(\hf)\leq \binom{n-2}{k-2}$ holds  and for $n\geq \lceil\frac{8k}{3}\rceil$,  $d_{\lceil\frac{8k}{3}\rceil}(\hf)\leq \binom{n-2}{k-2}$ holds for general (non-shifted) intersecting families.

We need the following lemma.

\begin{lem}[\cite{FW2024}]\label{lem-key2}
Let $\hf\subset \binom{[n]}{k}$ be an intersecting family  and let $|\hf(u)|=d_1(\hf)$. If $n\geq 2k\geq 6$ and $|\hf(\bar{u},v)|\geq  5\binom{n-4}{k-3}$ for some $v\in [n]\setminus \{u\}$, then there exists $w\in [n]\setminus \{u,v\}$ such that
\begin{align*}
 |\hf(\emptyset,\{u,v,w\})|\leq \binom{n-7}{k-4}\mbox{ and }|\hf(\{x\},\{u,v,w\})|\leq \binom{n-4}{k-3},\ x=u,v,w.
\end{align*}
\end{lem}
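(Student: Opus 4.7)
Let $\hg := \hf(\bar u, v)$ and $\hh := \hf(u, \bar v)$, both $(k-1)$-uniform families on $M := [n]\setminus\{u,v\}$ of size $n-2$. They are cross-intersecting (since $\hf$ is intersecting), and the hypothesis $|\hf(u)|\geq |\hf(v)|$ gives $|\hh|\geq |\hg|\geq 5\binom{n-4}{k-3}$. By Daykin's theorem (applicable since $n\geq 2k$ makes $n-2\geq 2(k-1)$), $|\hg|\leq \binom{n-3}{k-2}$, and a Corollary~\ref{cor-hm}-type bound translates $|\hg|\geq 5\binom{n-4}{k-3}$ into $|\hh|\leq \binom{n-3}{k-2}+\binom{n-7}{k-5}$. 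I would then apply the cross-intersecting Kruskal--Katona reformulation to replace $\hg, \hh$ by their lex-initial segments on $M$ (preserving sizes and cross-intersection); the proposed $w$ is the first element of the underlying linear order on $M$.

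Write $Y := [n]\setminus\{u,v,w\}$, $|Y|=n-3$. With this $w$, the bound $|\hg(\bar w)|\leq \binom{n-4}{k-3}$ is immediate from the lex-initial form of $\hg$ together with $|\hg|\leq \binom{n-3}{k-2}$, which gives the bound on $|\hf(\{v\},\{u,v,w\})|$. Consequently $|\hg(w)|\geq 4\binom{n-4}{k-3}$, and the telescoping identity
\[
4\binom{n-4}{k-3}\;\geq\;\binom{n-4}{k-3}+\binom{n-5}{k-3}+\binom{n-6}{k-3}+\binom{n-7}{k-3}\;=\;\binom{n-3}{k-2}-\binom{n-7}{k-2}
\]
shows that the lex-initial form of $\hg(w)\subset \binom{Y}{k-2}$ contains every $(k-2)$-subset of $Y$ meeting a fixed $4$-subset $T\subseteq Y$.

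This last fact is what drives the sharpest bound $|\hf(\emptyset,\{u,v,w\})|\leq \binom{n-7}{k-4}$: any $k$-subset $F$ of $Y$ in $\hf(\emptyset,\{u,v,w\})$ must cross-intersect $\hg(w)$ (because $F$ and $G\cup\{v\}$ both lie in the intersecting $\hf$, and $v,w\notin F$), hence must contain the full set $T$, leaving at most $\binom{(n-3)-4}{k-4}=\binom{n-7}{k-4}$ choices. The bound $|\hf(\{u\},\{u,v,w\})|=|\hh(\bar w)|\leq \binom{n-4}{k-3}$ follows similarly: $\hh(\bar w)$ is a $(k-1)$-family on $Y$ cross-intersecting the large $(k-2)$-family $\hg(w)$, and by the same lex-initial analysis it is forced into a small star on $Y$. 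By the symmetric analysis for $\hh$, one has $|\hh(w)|\geq 4\binom{n-4}{k-3}$ as well, and an analogous cross-intersection argument — now using $\hh(w)$ together with the auxiliary family $\hf(\{u,v\},\{u,v,w\})$ of $(k-2)$-sets on $Y$ — yields $|\hf(\{w\},\{u,v,w\})|\leq \binom{n-4}{k-3}$.

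The main obstacle is precisely this last bound for $|\hf(\{w\},\{u,v,w\})|$: unlike the other single-element pieces, this subfamily shares the element $w$ with the natural heavy sides $\hg(w)$ and $\hh(w)$, so cross-intersection through $w$ is automatic and carries no information. One therefore has to combine weaker cross-intersection constraints (against $\hg(\bar w)$, $\hh(\bar w)$, and $\hf(\{u,v\},\{u,v,w\})$) with the lex-initial structure of both $\hg$ and $\hh$ simultaneously, in order to pin down the common centre that forces the star-sized bound $\binom{n-4}{k-3}$. The telescoping identity displayed above is the crucial numerical input throughout, as it converts the hypothesis $|\hg|\geq 5\binom{n-4}{k-3}$ into a sharp structural conclusion on the heavy side that is strong enough to yield all four bounds from a single choice of $w$.
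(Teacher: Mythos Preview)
The paper does not prove this lemma; it is quoted from \cite{FW2024} and used as a black box. So there is no in-paper argument to compare against, and I can only assess whether your sketch actually establishes the statement.

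There is a genuine gap at the very step where you choose $w$. The Hilton--Kruskal--Katona reformulation lets you replace $\hg=\hf(\bar u,v)$ and $\hh=\hf(u,\bar v)$ by lex-initial segments \emph{for the purpose of deriving size inequalities between $|\hg|$ and $|\hh|$}. It does not let you transport structural conclusions back to the original families. Concretely, when you say ``the proposed $w$ is the first element of the underlying linear order on $M$'' and then use that the lex-initial $\hg(w)$ contains every $(k-2)$-set meeting a fixed $4$-set $T$, that structural fact is about the lex replacement, not about the original $\hg$. But every subsequent cross-intersection argument---$\hf(\emptyset,\{u,v,w\})$ against $\hg(w)$, $\hh(\bar w)$ against $\hg(w)$, etc.---requires the large, well-structured $\hg(w)$ to be the \emph{actual} $\hf(\bar u,v,w)$ inside the original intersecting family $\hf$. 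Otherwise there is no reason a set $F\in\hf(\emptyset,\{u,v,w\})$ should meet the lex-initial surrogate, and the bound $|\hf(\emptyset,\{u,v,w\})|\le\binom{n-7}{k-4}$ does not follow. Even the easiest piece, $|\hf(\{v\},\{u,v,w\})|=|\hg(\bar w)|\le\binom{n-4}{k-3}$, is unjustified: Daykin gives $|\hg|\le\binom{n-3}{k-2}$, but for the original (non-initial) $\hg$ this says nothing about $|\hg(\bar w)|$ for your particular $w$. What is needed is an element $w$ chosen from the \emph{original} families so that both $|\hf(\bar u,v,w)|$ and $|\hf(u,\bar v,w)|$ are simultaneously large enough to drive the four cross-intersection bounds; the lex trick does not produce such a $w$.

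You also flag the bound on $|\hf(\{w\},\{u,v,w\})|$ as ``the main obstacle'' and describe only in outline how ``weaker cross-intersection constraints'' would combine; as written this step is not carried out. So the proposal contains both a structural error in the choice of $w$ and an acknowledged incomplete case.
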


\begin{cor}\label{cor-4.2}
Let $\hf\subset \binom{[n]}{k}$ be an intersecting family  and let $|\hf(u)|=d_1(\hf)$. If $n\geq 2k\geq 6$ and $|\hf(\bar{u},v)|\geq  5\binom{n-4}{k-3}$ for some $v\in [n]\setminus \{u\}$, then
\[
d_4(\hf) \leq 6\binom{n-3}{k-3}.
\]
\end{cor}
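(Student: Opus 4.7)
The plan is to apply Lemma \ref{lem-key2} to obtain a good triple $\{u,v,w\}$ and then show that every vertex $y$ outside this triple has degree at most $6\binom{n-3}{k-3}$; since $\{u,v,w\}$ contains only three vertices, the fourth-largest degree must be attained at some such $y$, giving the claimed bound.

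First I would invoke Lemma \ref{lem-key2} (whose hypothesis is exactly the hypothesis of the corollary) to fix $w \in [n]\setminus\{u,v\}$ such that $|\hf(\emptyset,\{u,v,w\})|\leq \binom{n-7}{k-4}$ and $|\hf(\{x\},\{u,v,w\})|\leq \binom{n-4}{k-3}$ for each $x\in\{u,v,w\}$. Then I would pick an arbitrary $y \in [n]\setminus\{u,v,w\}$ and decompose $\hf(y)$ according to the intersection pattern $F\cap \{u,v,w\}$, which can be one of eight sets (empty, three singletons, three pairs, or the full triple).

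The eight pieces are bounded as follows. For $S=\emptyset$, the number of $F\in\hf$ with $y\in F$ and $F\cap\{u,v,w\}=\emptyset$ is at most $|\hf(\emptyset,\{u,v,w\})|\leq \binom{n-7}{k-4}$. For each singleton $S=\{x\}$ the analogous count is at most $|\hf(\{x\},\{u,v,w\})|\leq \binom{n-4}{k-3}$. For each pair $S=\{x_1,x_2\}$, a direct count shows $F=\{x_1,x_2,y\}\cup E'$ with $E'$ a $(k-3)$-subset of $[n]\setminus\{u,v,w,y\}$, yielding at most $\binom{n-4}{k-3}$ sets. For $S=\{u,v,w\}$ the same direct count gives at most $\binom{n-4}{k-4}$. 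Summing,
\[
|\hf(y)| \;\leq\; \binom{n-7}{k-4} + 6\binom{n-4}{k-3} + \binom{n-4}{k-4} \;\leq\; 6\binom{n-4}{k-3} + 6\binom{n-4}{k-4} \;=\; 6\binom{n-3}{k-3},
\]
where the middle step uses $\binom{n-7}{k-4}\leq \binom{n-4}{k-4}$ and the last is Pascal's identity.

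Since the inequality $|\hf(y)|\leq 6\binom{n-3}{k-3}$ holds for every one of the $n-3$ vertices $y\in[n]\setminus\{u,v,w\}$, at most the three vertices $u,v,w$ can have strictly larger degree, so the fourth-largest degree of $\hf$ satisfies $d_4(\hf)\leq 6\binom{n-3}{k-3}$. There is no real obstacle here beyond the bookkeeping of the eight-way split; the reason the constant $6$ comes out exactly is that the three singleton pieces and the three pair pieces each contribute $\binom{n-4}{k-3}$, and the remaining two small terms fit comfortably inside $6\binom{n-4}{k-4}$ via Pascal.
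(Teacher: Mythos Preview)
Your proof is correct and follows essentially the same approach as the paper: apply Lemma~\ref{lem-key2} to obtain $w$, then bound $|\hf(y)|$ for every $y\notin\{u,v,w\}$ by decomposing according to $F\cap\{u,v,w\}$. The only difference is cosmetic: the paper groups the cases $|F\cap\{u,v,w\}|\ge 2$ via a union bound over the three pairs (yielding $3\binom{n-3}{k-3}$), whereas you split exactly into the three pair-cases and the full-triple case; both routes land at $6\binom{n-3}{k-3}$ via Pascal.
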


\begin{proof}
Indeed, by Lemma \ref{lem-key2} there exists $w\in [n]\setminus \{u,v\}$ such that
\begin{align*}
|\hf(\emptyset,\{u,v,w\})|\leq \binom{n-7}{k-4}\mbox{ and }|\hf(\{x\},\{u,v,w\})|\leq \binom{n-4}{k-3},\ x=u,v,w.
\end{align*}
It follows that for any $x\in [n]\setminus \{u,v,w\}$,
\[
|\hf(x)| \leq 3\binom{n-3}{k-3} +\binom{n-7}{k-4}+3\binom{n-4}{k-3}\leq 6\binom{n-3}{k-3}.
\]
\end{proof}

\begin{prop}\label{prop-4.1}
Let $\hf\subset \binom{[n]}{k}$ be an intersecting family  and let $|\hf(u)|=d_1(\hf)$. If $|\hf(v)|>\binom{n-2}{k-2}$ and $|\hf(\bar{u},v)|\leq \binom{n-4}{k-2}$ for some $v\in [n]\setminus \{u\}$, then
\[
|\hf(\bar{u},v)|>\frac{1}{2}|\hf(\bar{u})|.
\]
\end{prop}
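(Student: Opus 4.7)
The plan is to set $a=|\hf(u,v)|$, $c=|\hf(\bar u,v)|$, $d=|\hf(\bar u,\bar v)|$, so that, since $|\hf(\bar u)|=c+d$, the desired inequality $|\hf(\bar u,v)|>\tfrac12|\hf(\bar u)|$ is equivalent to $c>d$. The hypotheses translate to $a+c=|\hf(v)|>\binom{n-2}{k-2}$ and $c\le\binom{n-4}{k-2}$, so setting $s:=\binom{n-2}{k-2}-a\ge 0$ one has $s<c\le\binom{n-4}{k-2}$. I would then assume $d\ge c$ for contradiction and squeeze the $(k-2)$-shadow of the family $\hf':=\{W\setminus G:G\in\hf(\bar u,\bar v)\}\subset\binom{W}{n-2-k}$, where $W=[n]\setminus\{u,v\}$, between matching upper and lower bounds.

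The upper bound comes from the intersecting property of $\hf$: viewing $\hf(u,v)$ as a family of $(k-2)$-subsets of $W$ of size $a$ and $\hf(\bar u,\bar v)$ as a family of $k$-subsets of $W$ of size $d$, the two are cross-intersecting, so for every $G\in\hf(\bar u,\bar v)$ no $(k-2)$-subset of $W\setminus G$ can lie in $\hf(u,v)$. This yields
\[
|\partial^{(n-2k)}\hf'|\le\binom{n-2}{k-2}-a=s.
\]

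For the lower bound, by Kruskal--Katona the shadow is at least that of the colex initial segment $\hc(n-2,n-2-k,d)$, and the first $\binom{n-4}{n-2-k}=\binom{n-4}{k-2}$ sets in colex are precisely the $(n-2-k)$-subsets of an $(n-4)$-element subset $W_0\subset W$. If $d\le\binom{n-4}{k-2}$, then $\hc(n-2,n-2-k,d)\subset\binom{W_0}{n-2-k}$, and the fortunate numerology $(n-2-k)+(k-2)=|W_0|$ makes set size and shadow size complementary in $W_0$; a direct double count of pairs $(T,A)$ with $T\subset A$, $|T|=k-2$, $A\in\hc$, using the identity $\binom{n-2-k}{k-2}=\binom{(n-4)-(k-2)}{(n-2-k)-(k-2)}$ on the two sides, gives that the $(k-2)$-shadow has size $\ge d\ge c$. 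If instead $d>\binom{n-4}{k-2}$, the segment already contains all of $\binom{W_0}{n-2-k}$, so the shadow contains all of $\binom{W_0}{k-2}$, of size $\binom{n-4}{k-2}\ge c$. Either way the shadow has size $\ge c>s$, contradicting the upper bound above. The only delicate point I expect is the double count, which hinges on the complementarity $r+s=N$ with $r=n-2-k$, $s=k-2$, $N=n-4$; the rest is a direct application of Kruskal--Katona to the cross-intersecting pair $(\hf(u,v),\hf(\bar u,\bar v))$.
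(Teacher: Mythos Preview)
Your proof is correct and follows essentially the same route as the paper: both arguments exploit the Kruskal--Katona theorem on the cross-intersecting pair $\hf(u,v)\subset\binom{W}{k-2}$ and $\hf(\bar u,\bar v)\subset\binom{W}{k}$ to conclude $a+d\le\binom{n-2}{k-2}$, which together with $a+c>\binom{n-2}{k-2}$ gives $c>d$. The only difference is packaging: the paper invokes its Corollary~\ref{cor-1} (the lex/Hilton form of Kruskal--Katona) as a black box to obtain $|\hf(u,v)|+|\hf(\bar u,\bar v)|\le\binom{n-2}{k-2}$ in one line, whereas you unwind this via complementation and the shadow formulation, using the complementary-level double count $|\partial^{(n-2k)}\hc|\ge|\hc|$ inside the $(n-4)$-set $W_0$. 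Your version is more self-contained but longer; the paper's is shorter but relies on the earlier corollary. (Neither proof uses the hypothesis $|\hf(u)|=d_1(\hf)$, which is only relevant in the applications.)
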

\begin{proof}
Note that
\begin{align}\label{ineq-4.5}
|\hf(v)| =|\hf(u,v)|+|\hf(\bar{u},v)|>\binom{n-2}{k-2}.
\end{align}
By  $|\hf(\bar{u},v)|\leq \binom{n-4}{k-2}$,
\begin{align}\label{ineq-4.6}
|\hf(u,v)|>\binom{n-2}{k-2}-|\hf(\bar{u},v)| \geq \binom{n-2}{k-2}-\binom{n-4}{k-2}.
\end{align}
Applying Corollary \ref{cor-1} to the pair of cross-intersecting families $\hf(u,v)$, $\hf(\bar{u},\overline{v})$ yields
\[
|\hf(u,v)|+|\hf(\bar{u},\overline{v})|\leq \binom{n-2}{k-2}.
\]
Comparing with \eqref{ineq-4.5}, we get $|\hf(\bar{u},v)|>|\hf(\bar{u},\overline{v})|$.  Since $|\hf(\bar{u})|=|\hf(\bar{u},v)|+|\hf(\bar{u},\overline{v})|$, $|\hf(\bar{u},v)|>\frac{1}{2}|\hf(\bar{u})|$ follows.
\end{proof}

\begin{proof}[Proof of Theorem \ref{thm-main-4}]
 Suppose that $|\hf(1)|=d_1(\hf)$. By Corollary \ref{cor-4.2}, if $|\hf(\bar{1},x)|\geq  5\binom{n-4}{k-3}$ for some $x\in [2,n]$, then by $n\geq 6k-10$,
 \[
 d_{2k+1}(\hf)\leq d_4(\hf) \leq 6\binom{n-3}{k-3}\leq  \binom{n-2}{k-2}.
 \]
Thus we may assume $|\hf(\bar{1},x)|<  5\binom{n-4}{k-3}$ for all $x\in [2,n]$.

Note that $n\geq 6k-9$ implies that
\begin{align}\label{ineq-4.1}
|\hf(\bar{1},x)|<  5\binom{n-4}{k-3} \leq \frac{5(k-2)}{n-k-1}\binom{n-4}{k-2} \leq \binom{n-4}{k-2}.
\end{align}
Arguing indirectly we may assume that there exist $2k$ distinct elements $x_1,\ldots,x_{2k}\in [2,n]$ satisfying $|\hf(x_i)|>\binom{n-2}{k-2}$. Applying Proposition \ref{prop-4.1} with $u=1$ and $v=x_i$, we obtain that
 \[
 |\hf(\bar{1},x_i)|>\frac{1}{2}|\hf(\bar{1})|,\ 1\leq i\leq 2k.
 \]
 Summing these $2k$ inequalities,
\[
k|\hf(\bar{1})| =\sum_{x\in [2,n]} |\hf(\bar{1},x)|\geq \sum_{1\leq i\leq 2k} |\hf(\bar{1},x_i)|>k|\hf(\bar{1})|,
\]
a contradiction. Thus $d_{2k+1} \leq \binom{n-2}{k-2}$.
\end{proof}

\begin{lem}\label{lem-4.4}
Let $\hf\subset \binom{[n]}{k}$ be an intersecting family with $n>2k$. Then either $d_{2k+1}(\hf)\leq \binom{n-2}{k-2}$ or $|\hf|\leq  \binom{n-2}{k-2}+2\binom{n-3}{k-2}$ holds.
\end{lem}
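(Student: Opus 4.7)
The plan is to reduce the lemma to Theorem~\ref{thm-f87}. Concretely, I will show that the hypothesis $d_{2k+1}(\hf) > \binom{n-2}{k-2}$ forces $d_1(\hf) \leq d_1(\hh_3) = \binom{n-2}{k-2} + \binom{n-3}{k-2} + \binom{n-4}{k-2}$. Theorem~\ref{thm-f87} applied with $\ell = 3$ then yields $|\hf| \leq |\hh_3|$, and a short telescoping Pascal calculation verifies the numerical identity $|\hh_3| = |\hh_2| = \binom{n-2}{k-2} + 2\binom{n-3}{k-2}$, which is exactly the bound that the lemma demands.

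Assume the first alternative fails, so $d_{2k+1}(\hf) > \binom{n-2}{k-2}$, and relabel vertices so that $|\hf(1)| = d_1(\hf)$. Set $A = \{v \in [2,n] : |\hf(v)| > \binom{n-2}{k-2}\}$; the hypothesis gives $|A| \geq 2k$. For each $v \in A$, Proposition~\ref{prop-4.1} (applied with $u=1$) forces either $|\hf(\bar 1, v)| > \binom{n-4}{k-2}$ or $|\hf(\bar 1, v)| > \frac{1}{2}|\hf(\bar 1)|$. If every $v \in A$ satisfied only the second alternative, summing would give $\sum_{v \in A} |\hf(\bar 1, v)| > k\,|\hf(\bar 1)|$, contradicting $\sum_{v \in [2,n]} |\hf(\bar 1, v)| = k\,|\hf(\bar 1)|$; this is precisely the counting device that drives the proof of Theorem~\ref{thm-main-4}. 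Hence some $v \in A$, which I relabel as $2$, must satisfy $|\hf(\bar 1, 2)| > \binom{n-4}{k-2}$.

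Since $n > 2k$, an easy check gives $\binom{n-4}{k-2} \geq \binom{n-4}{k-3}$, so the second part of Corollary~\ref{cor-hm} applies with $\ell = 2$ to the cross-intersecting $(k-1)$-uniform pair $\hf(\bar 1, 2), \hf(1, \bar 2)$ on the ground set $[3,n]$ of size $n-2$. This yields
\[
|\hf(1, \bar 2)| \;\leq\; \binom{n-2}{k-1} - \binom{n-4}{k-1} \;=\; \binom{n-3}{k-2} + \binom{n-4}{k-2}.
\]
Combining with the trivial bound $|\hf(1,2)| \leq \binom{n-2}{k-2}$, we get $d_1(\hf) \leq d_1(\hh_3)$, and Theorem~\ref{thm-f87} with $\ell = 3$ completes the argument.

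The main subtlety I anticipate is the (perhaps unexpected) identity $|\hh_2| = |\hh_3|$: this is what allows the argument to deliver the lemma's precise bound rather than a weaker one. The threshold $\binom{n-4}{k-2}$ produced by Proposition~\ref{prop-4.1} is exactly calibrated to permit the choice $\ell = 2$ in Corollary~\ref{cor-hm}; any coarser threshold would force a larger $\ell$ there and in turn yield $d_1(\hf) \leq d_1(\hh_{\ell'})$ for some $\ell' \geq 4$, producing a strictly weaker size bound than $|\hh_2|$.
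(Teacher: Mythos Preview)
Your proof is correct and follows essentially the same route as the paper's. Both arguments assume $d_{2k+1}(\hf)>\binom{n-2}{k-2}$, invoke Proposition~\ref{prop-4.1} together with the double-counting identity $\sum_v |\hf(\bar 1,v)|=k|\hf(\bar 1)|$, then feed the resulting lower bound into Corollary~\ref{cor-hm} to obtain $d_1(\hf)\le d_1(\hh_3)$ and finish via Theorem~\ref{thm-f87} with $\ell=3$ (using $|\hh_3|=|\hh_2|$). The only cosmetic difference is that the paper case-splits on the diversity $\gamma(\hf)=|\hf(\bar 1)|$ and applies Corollary~\ref{cor-hm} directly to the pair $\hf(\bar 1),\hf(1)$, whereas you locate a specific vertex $v$ with $|\hf(\bar 1,v)|>\binom{n-4}{k-2}$ and apply the corollary to $\hf(\bar 1,v),\hf(1,\bar v)$; both yield the identical bound $d_1(\hf)\le \binom{n-1}{k-1}-\binom{n-4}{k-1}$.
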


\begin{proof}
Suppose that $|\hf(1)|=d_1(\hf)$. Then the {\it diversity}, $\gamma(\hf)$ is simply $|\hf(\bar{1})| =|\hf|-d_1(\hf)$.

Let us show that  $\gamma(\hf)\leq \binom{n-4}{k-2}$ implies $d_{2k+1}(\hf)\leq \binom{n-2}{k-2}$.
Arguing indirectly assume that there exist $2k$ distinct elements $x_1,\ldots,x_{2k}\in [2,n]$ satisfying $|\hf(x_i)|>\binom{n-2}{k-2}$. Note that
\[
|\hf(\bar{1},x_i)|\leq |\hf(\bar{1})|= \gamma(\hf)\leq\binom{n-4}{k-2}.
\]
 Applying Proposition \ref{prop-4.1} with $u=1$ and $v=x_i$, we obtain that
 \[
 |\hf(\bar{1},x_i)|>\frac{1}{2}|\hf(\bar{1})|,\ 1\leq i\leq 2k.
 \]
 Summing these $2k$ inequalities,
\[
k|\hf(\bar{1})| =\sum_{x\in [2,n]} |\hf(\bar{1},x)|\geq \sum_{1\leq i\leq 2k} |\hf(\bar{1},x_i)|>k|\hf(\bar{1})|,
\]
a contradiction. Thus we may assume that $\gamma(\hf)> \binom{n-4}{k-2}$.

For $n> 2k$, $|\hf(\bar{1})|=\gamma(\hf)> \binom{n-4}{k-2}\geq  \binom{n-4}{k-3}$. By applying Corollary \ref{cor-hm} with $\ha=\hf(\bar{1})$ and $\hb=\hf(1)$,
\[
|\hf(1)|\leq \binom{n-1}{k-1}-\binom{n-4}{k-1}.
\]
By Theorem \ref{thm-f87}, we conclude that
\begin{align*}
|\hf|\leq |\hh_3|=\binom{n-2}{k-2}+2\binom{n-3}{k-2}.
\end{align*}
Note that this can be deduced directly from Theorem 2 in \cite{KZ2018} as well.
\end{proof}

\begin{proof}[Proof of Theorem \ref{thm-main-6}]
By Lemma \ref{lem-4.4} we may assume that $|\hf|\leq  \binom{n-2}{k-2}+2\binom{n-3}{k-2}$. If $n\geq 6k-9$, then by Theorem \ref{thm-main-4},
\[
d_{\lceil\frac{8k}{3}\rceil}(\hf) \leq d_{2k+1}(\hf) \leq \binom{n-2}{k-2}.
\]
Thus we may assume $n\leq 6k-10$.

Note that $n\leq 6k-10$ is equivalent to  $\frac{k-2}{n-2} \geq \frac{1}{6}$. Then
\begin{align}\label{ineq-4.7}
|\hf|\leq \binom{n-2}{k-2}+2\binom{n-3}{k-2}=\left(3-\frac{2(k-2)}{n-2}\right)\binom{n-2}{k-2}\leq \frac{8}{3}\binom{n-2}{k-2}.
\end{align}
If $d_{\lceil\frac{8k}{3}\rceil}>\binom{n-2}{k-2}$, then
\[
\left\lceil\frac{8k}{3}\right\rceil\binom{n-2}{k-2}< \sum_{1\leq i\leq \lceil\frac{8k}{3}\rceil}d_{i}\leq  \sum_{x\in [n]}|\hf(x)| = k|\hf|<\frac{8}{3}k\binom{n-2}{k-2},
\]
a contradiction. Thus $d_{\lceil\frac{8k}{3}\rceil}\leq \binom{n-2}{k-2}$ holds.
\end{proof}

Let us prove a statement that assumes $\tau(\hf)= 2$.

\begin{prop}
Let $\hf\subset \binom{n}{k}$ be an intersecting family with  $\tau(\hf)=2$. Then for $n>2k$,
\[
d_{2k+1}(\hf)<\binom{n-2}{k-2}.
\]
\end{prop}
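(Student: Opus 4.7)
My plan is to proceed by contradiction. Suppose $d_{2k+1}(\hf)\geq\binom{n-2}{k-2}$, and put $Y:=\{y\in[n]:|\hf(y)|\geq\binom{n-2}{k-2}\}$, so $|Y|\geq 2k+1$. Since $\tau(\hf)=2$, I fix a $2$-transversal $\{a,b\}$ and decompose
\[
\ha=\{F\in\hf:a\in F,\,b\notin F\},\quad \hb=\{F\in\hf:b\in F,\,a\notin F\},\quad \hc=\{F\in\hf:\{a,b\}\subseteq F\}.
\]
Both $\ha$ and $\hb$ are non-empty (otherwise $\tau\leq 1$). Viewing $\ha$ and $\hb$ as $(k-1)$-uniform cross-intersecting families on the $(n-2)$-set $[n]\setminus\{a,b\}$ (so $n-2>2(k-1)$), Daykin's theorem gives $\min(|\ha|,|\hb|)\leq\binom{n-3}{k-2}$, and after swapping $a,b$ if necessary I may assume $|\hb|\leq\binom{n-3}{k-2}$.

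The basic estimate is that for every $y\in[n]\setminus\{a,b\}$, $|\hf(y)|=|\ha(y)|+|\hb(y)|+|\hc(y)|$ with $|\ha(y)|+|\hc(y)|\leq\binom{n-2}{k-2}$ (edges of $\hf$ through $\{a,y\}$). In particular $|\hf(y)|\leq\binom{n-2}{k-2}+|\hb(y)|$, and equality in the inner count means that every $k$-set through $\{a,y\}$ lies in $\hf$; a short argument using $n>2k$ then forces $\{a,y\}$ to be a $2$-transversal as well (otherwise, picking $F\in\hf$ with $a,y\notin F$, one can select a $(k-2)$-subset $S\subseteq[n]\setminus(F\cup\{a,y\})$ since $|[n]\setminus(F\cup\{a,y\})|=n-k-2\geq k-2$, giving $\{a,y\}\cup S\in\hf$ disjoint from $F$, contradicting intersection). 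Accordingly, I split $Y\setminus\{a,b\}=S_A\sqcup S_B$, where $S_A$ collects those $y$ for which $\{a,y\}$ is a $2$-transversal, and $S_B$ the remaining. For $y\in S_A$ every $B\in\hb$ contains $y$, so $S_A\cup\{b\}\subseteq\bigcap_{B\in\hb}B$, yielding $|S_A|\leq k-1$.

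For each $y\in S_B$ there is $B_0\in\hb$ with $y\notin B_0$; writing $A\in\ha(y)$ as $\{a,y\}\cup A'$ with $A'\in\binom{[n]\setminus\{a,b,y\}}{k-2}$, cross-intersection with $B_0$ reduces to $A'$ meeting the $(k-1)$-set $B_0\setminus\{b\}\subseteq[n]\setminus\{a,b,y\}$, giving
\[
|\ha(y)|\leq\binom{n-3}{k-2}-\binom{n-k-2}{k-2}.
\]
Combined with $|\hc(y)|\leq\binom{n-3}{k-3}$ this sharpens the earlier bound to $|\hf(y)|\leq\binom{n-2}{k-2}-\binom{n-k-2}{k-2}+|\hb(y)|$, so $|\hb(y)|\geq\binom{n-k-2}{k-2}$ for $y\in S_B$. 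Summing $|\hb(y)|$ over $y\in[n]\setminus\{a,b\}$ (which equals $(k-1)|\hb|$), and separating the $S_A$ contribution $|S_A|\cdot|\hb|$ from the $S_B$ contribution, yields
\[
|S_B|\cdot\binom{n-k-2}{k-2}\leq (k-1-|S_A|)\,|\hb|.
\]

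It remains to deduce $|S_A|+|S_B|\leq 2k-2$, which gives $|Y|\leq 2k$ and the contradiction. In the easy range $|\hb|\leq 2\binom{n-k-2}{k-2}$ the displayed inequality immediately yields $|S_B|\leq 2(k-1-|S_A|)$, hence $|S_A|+|S_B|\leq 2(k-1)$. The main obstacle I anticipate is the complementary range $|\hb|>2\binom{n-k-2}{k-2}$ (which can occur only when $n$ is rather close to $2k$), where this averaging alone is too weak; here I expect one must invoke the near-extremality of Daykin's inequality to conclude that $\ha$ and $\hb$ are close to being full stars with a common centre $c$, so that $\hf$ is essentially a subfamily of the triangle family $\hh_2$ on $\{a,b,c\}$, in which $|Y|\leq 3\leq 2k$ is immediate.
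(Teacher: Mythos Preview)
Your argument is clean and correct in the range $|\hb|\leq 2\binom{n-k-2}{k-2}$, but the gap you yourself flag in the complementary range is genuine, and the proposed fix is not a proof. There is no ready-made ``near-extremal Daykin'' statement of the shape you invoke; stability versions of Kruskal--Katona exist, but turning one into the structural conclusion ``$\hf$ is essentially a subfamily of $\hh_2$'' would require a precise quantitative lemma together with a separate argument, none of which you supply. Concretely, for $k=4$, $n=9$ one has $\binom{n-3}{k-2}=15$ while $2\binom{n-k-2}{k-2}=6$, so the uncovered range is wide; there your averaging inequality $|S_B|\binom{n-k-2}{k-2}\leq(k-1-|S_A|)\,|\hb|$ only yields $|S_B|\leq 5(k-1-|S_A|)$, far from the needed $|S_A|+|S_B|\leq 2k-2$.

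The paper closes this gap by a different route. It first replaces $\hf$ by its saturation (this preserves $\tau=2$ and can only raise degrees), so that every $k$-set through the transversal $\{a,b\}$ lies in $\hf$ and hence $|\hc|=\binom{n-2}{k-2}$. It then invokes Lemma~\ref{lem-4.4} (which in turn rests on Theorem~\ref{thm-f87}) to obtain the global bound $|\hf|\leq\binom{n-2}{k-2}+2\binom{n-3}{k-2}$, whence $|\ha|+|\hb|\leq 2\binom{n-3}{k-2}$. A single counting step then finishes: each $z\in Y\setminus\{a,b\}$ satisfies $|(\ha\cup\hb)(z)|\geq |\hf(z)|-\binom{n-3}{k-3}\geq\binom{n-3}{k-2}$, while $\sum_{z\neq a,b}|(\ha\cup\hb)(z)|=(k-1)\,|\ha\cup\hb|$, so at most $2(k-1)$ such $z$ exist. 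This bypasses your $S_A/S_B$ split and the troublesome ratio $|\hb|/\binom{n-k-2}{k-2}$ entirely, at the cost of importing Theorem~\ref{thm-f87}; your approach, where it works, is more self-contained.
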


\begin{proof}
Without loss of generality we assume that $\hf$ is saturated. By Lemma \ref{lem-4.4} we may assume that $|\hf|\leq  \binom{n-2}{k-2}+2\binom{n-3}{k-2}$.  Argue by contradiction, let $\{x_1,x_2\}$ be a transversal and suppose that $z_1,\ldots,z_{2k-2}\in [n]\setminus \{x_1,x_2\}$ have degree at least $\binom{n-2}{k-2}$. By saturatedness, we have $|\hf(\{x_1,x_2\})|=\binom{n-2}{k-2}$. Let
\[
\tilde{\hf}=\hf\setminus \{F\in \hf\colon \{x_1,x_2\}\subset F\}.
\]
Then $|\tilde{\hf}|< 2\binom{n-3}{k-2}$ and  $|\tilde{\hf}(z_i)|\geq \binom{n-2}{k-2}-\binom{n-3}{k-3}=\binom{n-3}{k-2}$, $i=1,2,\ldots,2k-2$. Since $F\cap \{x_1,x_2\}\neq \emptyset$ for each $F\in \tilde{\hf}$,
\[
2(k-1)\binom{n-3}{k-2} \leq \sum_{1\leq i\leq  2k-2} |\tilde{\hf}(z_i)| \leq \sum_{x\in [n]\setminus \{x_1,x_2\}} |\tilde{\hf}(x)|< (k-1)|\tilde{\hf}|<2(k-1)\binom{n-3}{k-2},
\]
a contradiction. Thus at  most $2k-3$ vertices in $[n]\setminus \{x_1,x_2\}$ have degree at least $\binom{n-2}{k-2}$ and the proposition follows.
\end{proof}

\section{Stronger bounds for large $n$}

We continue assuming that $\hf\subset \binom{[n]}{k}$ is a saturated $t$-intersecting family and $n> 2k-t$.
Recall that  $\hht=\hht_t(\hf)$ is the family of $t$-transversals of size not exceeding $k$ and the base  $\hb=\hb_t(\hf)$ is the family of members of $\hht$ that are minimal for containment.

Let us first prove a  statement for $\tau_t(\hf)\geq t+2$.

\begin{prop}\label{prop-new5.1}
Let $\hf\subset  \binom{[n]}{k}$ be saturated and  $t$-intersecting. If $n\geq \binom{t+2}{2}k^2 $ and $\tau_t(\hf)\geq t+2$ then
\begin{align}\label{ineq-new5.1}
d_{1+\tau_t(\hf)} <\binom{n-t-1}{k-t-1}
\end{align}
\end{prop}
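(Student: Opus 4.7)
The plan is to fix a minimum $t$-transversal $B_0\in\hb_t(\hf)$ with $|B_0|=\tau:=\tau_t(\hf)$ and to show that for every $y\in[n]\setminus B_0$ one has $d_y(\hf)<\binom{n-t-1}{k-t-1}$. Since $|B_0|=\tau\leq k$, this forces at most $\tau$ vertices to attain degree $\geq\binom{n-t-1}{k-t-1}$, which is exactly $d_{1+\tau}(\hf)<\binom{n-t-1}{k-t-1}$.

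Fix $y\in[n]\setminus B_0$. Every $F\in\hf$ with $y\in F$ satisfies $|F\cap B_0|\geq t$ (as $B_0$ is a $t$-transversal), so I partition $\hf(y)=\bigsqcup_{S\subset B_0,\,|S|\geq t}\ha_S$ with $\ha_S:=\{F\in\hf:y\in F,\ F\cap B_0=S\}$. For each $S$ with $|S|=t$, the key input from $\tau_t(\hf)\geq t+2$ is that $|\{y\}\cup S|=t+1<\tau$, so $\{y\}\cup S$ is not a $t$-transversal; hence some $F_S\in\hf$ has $|F_S\cap(\{y\}\cup S)|\leq t-1$. Writing $F=\{y\}\cup S\cup R$ with $R\subset[n]\setminus(B_0\cup\{y\})$ of size $k-t-1$, the $t$-intersection $|F\cap F_S|\geq t$ combined with $|F\cap F_S\cap(\{y\}\cup S)|\leq|F_S\cap(\{y\}\cup S)|\leq t-1$ forces $R$ to meet $F_S\setminus(B_0\cup\{y\})$, a set of size at most $k-t$. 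Hence $|\ha_S|\leq(k-t)\binom{n-\tau-2}{k-t-2}$. Together with the trivial bound $|\ha_S|\leq\binom{n-\tau-1}{k-|S|-1}$ for $|S|\geq t+1$, summation gives
\[
d_y\leq\binom{\tau}{t}(k-t)\binom{n-\tau-2}{k-t-2}+\sum_{s=t+1}^{\tau}\binom{\tau}{s}\binom{n-\tau-1}{k-s-1}.
\]

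The main obstacle is the numerical verification that this right-hand side is strictly less than $\binom{n-t-1}{k-t-1}$ under $n\geq\binom{t+2}{2}k^2$. In the extremal case $\tau=t+2$, the leading ratio equals
\[
\binom{t+2}{2}(k-t)\cdot\frac{(k-t-1)(n-k)(n-k-1)}{(n-t-1)(n-t-2)(n-t-3)}\;\lesssim\;\binom{t+2}{2}\frac{(k-t)^2}{n}\leq 1,
\]
with the lower-order terms in the sum contributing only factors of order $k^2/n$, leaving enough slack for strict inequality. For $\tau\geq t+3$ the prefactor $\binom{\tau}{t}$ grows, but the bound on each $|\ha_S|$ can be sharpened: for every $b\in B_0\setminus S$ the set $\{y\}\cup S\cup\{b\}$ has size $t+2<\tau$ and so is also not a $t$-transversal, providing $\tau-t$ further subsets of $[n]\setminus(B_0\cup\{y\})$ (each of size $\leq k-t$) that $R$ must simultaneously meet. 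The resulting multiplicative tightening in $|\ha_S|$ compensates for the growth of $\binom{\tau}{t}$ across the whole range $t+2\leq\tau\leq k$, yielding the desired strict inequality throughout.
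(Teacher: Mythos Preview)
Your setup and the case $\tau=t+2$ are essentially the paper's argument: fix a minimum $t$-transversal, bound the degree of every vertex outside it, and use that $\{y\}\cup S$ (size $t+1$) is not a $t$-transversal to force $R$ to hit a set of size $\le k-t$. For $\tau=t+2$ your displayed bound is even marginally sharper than the paper's and the numerics go through under $n\ge\binom{t+2}{2}k^2$.

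The gap is in the step for $\tau\ge t+3$. Your single-iteration bound $\binom{\tau}{t}(k-t)\binom{n-\tau-2}{k-t-2}$ alone is genuinely too weak: for $t=1$, $k=10$, $\tau=10$, $n=300=\binom{3}{2}k^2$ one has $90\binom{288}{7}>\binom{298}{8}$, so already the leading term exceeds the target. Your proposed remedy, adding each $b\in B_0\setminus S$ to get $\tau-t$ sets that $R$ must ``simultaneously meet'', does not yield a multiplicative gain. The witnesses $F_{S,b}$ may be chosen so that the sets $F_{S,b}\setminus(B_0\cup\{y\})$ overlap heavily (nothing prevents them from lying in a common $(k-t)$-set), and then ``meet all of them'' is no stronger than ``meet one of them''. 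So the claimed compensation for the growth of $\binom{\tau}{t}$ is unjustified.

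The paper closes this gap by iterating in a different direction: instead of enlarging $\{y\}\cup S$ with elements of $B_0$ (which are absent from $F$ anyway), it branches over a point $z_{t+1}\in F_S\setminus\{y,z_1,\dots,z_t\}$ that \emph{does} lie in $F$, then uses that $\{y,z_1,\dots,z_{t+1}\}$ (size $t+2<\tau$) is still not a $t$-transversal to pick $F_{t+2}$ and branch over $z_{t+2}$, and so on up to $z_{\tau-1}$. This produces at most $\binom{\tau}{t}k^{\tau-1-t}$ candidate $(\tau-1)$-sets, each contained in $G$, giving $|\hf(y)|\le\binom{\tau}{t}k^{\tau-1-t}\binom{n-\tau}{k-\tau}$; the paper then checks that this quantity is monotone decreasing in $\tau$ for $\tau\ge t+2$, reducing everything to the $\tau=t+2$ case you already handled. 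Replacing your ``add $b\in B_0$'' step with this branching over witness points would complete your argument.
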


\begin{proof}
Set $r=\tau_t(\hf)$ and fix $T_1\in \hht$ satisfying $|T_1|=r\geq t+2$. To prove \eqref{ineq-new5.1} it is sufficient to show
\begin{align}\label{ineq-new5.2}
|\hf(y)|<\binom{n-t-1}{k-t-1} \mbox{ for all }y\in [n]\setminus T_1.
\end{align}

To prove \eqref{ineq-new5.2} for a fixed $y\in [n]\setminus T_1$ let us construct $rk^{r-t}$ sequences $(z_1,z_2,\ldots,z_{r-1})$ in the following way.

Choose a $t$-subset
$\{z_1,z_2,\ldots,z_t\}\subset T_1$, define a sequence $(z_1,z_2,\ldots,z_t)$. If $z_1,\ldots,z_\ell$ are chosen and $\ell<r-1$ then choose $F_{\ell+1}\in \hf$ satisfying $|\{y,z_1,z_2,\ldots,z_\ell\}\cap F_{\ell+1}|<t$. This is possible by $\tau_t(\hf)=r$. Now  extend the sequence to $(z_1,z_2,\ldots,z_{\ell+1})$ in all possible ways with $z_{\ell+1}\in F_{\ell+1}\setminus \{y,z_1,z_2,\ldots,z_\ell\}$.

It should be clear from the construction that every member $G\in \hf(y)$ contains at least one of the sets $\{z_1,z_2,\ldots,z_\ell\}$ at each stage, in particular for $\ell=r-1$. Consequently,
\[
|\hf(y)| \leq \binom{r}{t}\cdot k^{r-1-t} \binom{n-r}{k-r}.
\]

To conclude the proof of \eqref{ineq-new5.2} we show that for $r\geq t+2$ and $n\geq \binom{t+2}{2}k^2$,
\begin{align}\label{ineq-new5.3}
\binom{r}{t}\cdot k^{r-1-t} \binom{n-r}{k-r}<\binom{n-t-1}{k-t-1}.
\end{align}
For $r=t+2$,
$\binom{t+2}{2}k\binom{n-t-2}{k-t-2}=\binom{t+2}{2}\frac{k(k-t-1)}{n-t-1}\binom{n-t-1}{k-t-1}$ and the desired inequality follows from $n\geq \binom{t+2}{2}k^2 >\binom{t+2}{2}k(k-t-1)+t+1$.  For $r\geq t+3$ just note that the LHS of \eqref{ineq-new5.3} is a monotone decreasing function of $t$:
\[
\frac{\binom{r+1}{t}k^{r-t}\binom{n-r-1}{k-r-1}}{\binom{r}{t}k^{r-1-t}\binom{n-r}{k-r}} = \frac{r+1}{r+1-t} \frac{k(k-r)}{n-r} <\frac{t+4}{4} \frac{k^2}{n}<1.
\]
\end{proof}

A collection of sets $F_0,\ldots,F_k$ is called a {\it sunflower of size $k+1$ with  center $C$} if $F_i\cap F_j=C$ for all distinct $i,j\in \{0,1,\ldots,k\}$.

\begin{proof}[Proof of Theorem \ref{thm-main-5}]
Let $\hf\subset \binom{[n]}{k}$ be saturated and $t$-intersecting. By Proposition \ref{prop-new5.1} we may assume $\tau_t(\hf)\leq t+1$.
Let $\hht=\hht_t(\hf)$, $\hb=\hb_t(\hf)$ and $\hb_0=\{B\in \hb\colon |B|=t+1\}$. If there exists $B\in \hb$ with $|B|=t$, then  $B\subset F$ for all $F\in \hf$. By saturatedness  $d_{k+2}(\hf)=\binom{n-t-1}{k-t-1}$ and we are done. Thus we may assume $\tau_t(\hf)= t+1$ and $\hb_0\neq \emptyset$.

\begin{claim}\label{claim-5.4}
$|\cup \hb_0|\leq k+1$.
\end{claim}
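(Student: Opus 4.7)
The plan is to combine the $t$-intersecting property of the basis $\hb$ (Lemma \ref{lem-3.3}) with the minimality of $\tau_t(\hf)=t+1$. Since distinct $B,B'\in \hb_0$ are $(t+1)$-sets with $|B\cap B'|\geq t$, they must share exactly $t$ elements. Fix $B_0\in \hb_0$ and, for every other $B_i\in \hb_0$, write $B_i=D_i\cup\{y_i\}$ where $D_i=B_i\cap B_0$ is a $t$-subset of $B_0$ and $y_i\in [n]\setminus B_0$.

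For any two distinct $B_i,B_j\in \hb_0\setminus\{B_0\}$ one computes
\[
|B_i\cap B_j|=|D_i\cap D_j|+[y_i=y_j]\geq t.
\]
Since $D_i,D_j$ are $t$-subsets of the $(t+1)$-set $B_0$, $|D_i\cap D_j|\in\{t-1,t\}$, so either $D_i=D_j$ or $y_i=y_j$. Viewing each $B_i$ as the lattice point $(D_i,y_i)$, every two such points agree in at least one coordinate; an easy check rules out an $L$-shaped triple, so all points lie on a single row or a single column. Exactly one of the following therefore occurs.

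\emph{Column case.} All $y_i$ equal a common $y^*\in [n]\setminus B_0$. Then $\bigcup \hb_0\subset B_0\cup\{y^*\}$ has size at most $t+2$, and $t+2\leq k+1$ since $k>t$.

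\emph{Row case.} All $D_i$ coincide with a common $t$-set $D^*\subset B_0$, and letting $y_0$ be the unique element of $B_0\setminus D^*$ we also have $B_0=D^*\cup\{y_0\}$. Hence $\hb_0=\{D^*\cup\{y\}:y\in Y\}$ is a sunflower with core $D^*$ and petal set $Y$, with $Y$ disjoint from $D^*$ and $\bigcup \hb_0=D^*\cup Y$. Because $|D^*|=t<\tau_t(\hf)$, the set $D^*$ is not itself a $t$-transversal, so there exists $F_0\in\hf$ with $|F_0\cap D^*|\leq t-1$. For each $y\in Y$, the $t$-transversal property of $D^*\cup\{y\}\in \hb_0$ yields $|F_0\cap D^*|+[y\in F_0]\geq t$; combined with $|F_0\cap D^*|\leq t-1$ this forces $|F_0\cap D^*|=t-1$ and $y\in F_0$. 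Hence $Y\subset F_0\setminus D^*$, so $|Y|\leq k-(t-1)=k-t+1$ and $|\bigcup \hb_0|=t+|Y|\leq k+1$.

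The main obstacle is the structural dichotomy for $(t+1)$-sets pairwise intersecting in exactly $t$; once the row/column alternative is in hand, both cases reduce quickly, the row case using only the fact that the core $D^*$ narrowly fails to be a $t$-transversal.
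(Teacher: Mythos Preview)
Your argument is correct. The underlying idea coincides with the paper's: both proofs exploit that distinct $(t+1)$-sets which pairwise meet in at least $t$ points are forced into a sunflower-like configuration, and then use $\tau_t(\hf)=t+1$ to bound the number of petals via a single edge $F_0$ that misses the core in one point. The paper packages this as a contradiction (assume $|\cup\hb_0|\geq k+2$, extract $k-t+2$ sunflower members with core $[t]$, and conclude $[t]\subset F$ for every $F$, contradicting $\tau_t=t+1$); you instead give a direct structural classification (your row/column dichotomy) and bound each branch. Your ``row case'' and the paper's contradiction are contrapositives of the same counting step: the paper shows ``many petals $\Rightarrow$ core is a $t$-transversal'', you show ``core not a $t$-transversal $\Rightarrow$ few petals''. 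Your ``column case'' ($|\cup\hb_0|\leq t+2$) is exactly the non-sunflower configuration that the paper analyses separately in Case~1 just after the claim, so your formulation actually anticipates that later split. Two small cosmetic points: the phrase ``exactly one of the following occurs'' is only literally true once $|\hb_0|\geq 3$ (for $|\hb_0|\leq 2$ both alternatives hold vacuously, though either yields the bound), and the trivial case $|\hb_0|=1$ deserves a one-line mention; neither affects the validity.
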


\begin{proof}
Suppose for contradiction that $|\cup \hb_0|\geq k+2$. Choose $B_{t+1},B_{t+2}\in \hb_0$. Then by symmetry we may assume $B_{t+i}=[t]\cup \{t+i\}$, $i=1,2$. For $t+2<j\leq k+2$ fix $B_j\in \hb_0$ with $j\in \hb_j$. Now $|B_j|=t+1$ and $|B_{t+i}\cap B_j|\geq  t$ imply $B_j=[t]\cup \{j\}$. Hence $B_{t+1},B_{t+2},\ldots,B_{k+2}$ form a sunflower with center $[t]$. Now $[t]\subset F$ follows for all $F\in \hf$, contradicting our assumption $\tau_t(\hf)= t+1$.
\end{proof}

By Claim \ref{claim-5.4} and symmetry we may assume $\cup \hb_0 \subset [k+1]$. To prove the theorem it is sufficient to show
\begin{align}
|\hf(x)| <\binom{n-t-1}{k-t-1} \mbox{ for }k+2<x\leq u.
\end{align}
We distinguish two cases.

{\bf Case 1. }  $\hb_0$ is not a sunflower and $|\hb_0|\geq 3$.

Without loss of generality, assume  $[t+1],[t]\cup \{t+2\}\in \hb_0$. Since $\hb_0$ is not a sunflower, there is some $B_0\in \hb_0$ such that $|B_0\cap [t]|<t$. Since $|B_0\cap [t+1]|\geq t$, it follows that $|B_0\cap [t]|=t-1$.
Since $|B_0\cap ([t]\cup \{t+2\})|\geq t$, we have $\{t+1,t+2\}\subset B_0$ whence $B_0\subset [t+2]$. Now  the $t$-intersecting property implies $|F\cap [t+2]|\geq t+1$ for all  $F\in \hf$. Indeed, $F\cap [t+1]$, $F\cap ([t]\cup \{t+2\})$ and $F\cap B_0$, all three have size at least $t$. This implies $|F\cap [t+2]|\geq t+1$ for all $F\in \hf$. Thus for any $x\in [t+3,n]$,
\[
|\hf(x)| \leq (t+2)\binom{n-t-2}{k-t-2} <\binom{n-t-1}{k-t-1} \mbox{ for }n\geq (t+2)k.
\]

{\bf Case 2. }$\hb_0$ is a sunflower.

 Without loss of generality, assume that $\hb_0=\{[t]\cup \{q\}\colon t+1\leq q\leq t+s\}$ for some $s$, $1\leq s\leq k-t+1$. Since $[t]\notin \hb_0$, there exists $F_0\in \hf$ such that $|F_0\cap[t]|\leq t-1$. Since $|F_0\cap ([t]\cup \{q\})|\geq t$ for all $t+1\leq q\leq t+s$, it follows that $|F_0\cap[t]|= t-1$ and $\{t+1,t+2,\ldots, t+s\}\subset F_0$. Then $|[t]\cup F_0|=k+1$. Let $x\in [n]\setminus ([t]\cup F_0)$. For any $F\in \hf$ with $x\in F$, we infer that either $[t]\subset F$ and $F\cap (F_0\setminus [t])\neq \emptyset$ or $|F\cap [t]|=t-1$ and $\{t+1,t+2,\ldots, t+s\} \subset F$. Then
\begin{align}
|\hf(x)| \leq (k-t+1) \binom{n-t-2}{k-t-2} +t \binom{n-t-s-1}{k-t-s}.
\end{align}
For $s\geq 2$ and $n\geq k^2$,
\[
|\hf(x)| \leq (k-t+1) \binom{n-t-2}{k-t-2} +t \binom{n-t-3}{k-t-2}< \binom{n-t-1}{k-t-1}.
\]

For $s=1$, assume $\hb_0=\{[t+1]\}$. Let $x\in [t+2,n]$. For any $T\in \binom{[t+1]}{t}$, since $T\cup \{x\}\notin \hb_0$, there exists $F_T$ such that $|F_T\cap (T\cap \{x\})|\leq t-1$. It follows that for $n\geq (t+1)(k-t)^2$,
\[
|\hf(x)| \leq \binom{t+1}{t} (k-t+1) \binom{n-t-2}{k-t-2} < \binom{n-t-1}{k-t-1}.
\]
\end{proof}

\section{Proofs of Theorems \ref{thm-main-2} and \ref{thm-main-3}}

\begin{lem}\label{lem-3.4}
Let $\ha,\hb\subset \binom{[n]}{k}$ be cross-intersecting, $|\ha|\leq |\hb|$ and $n\geq (r+1)k$. Then
\begin{align}
r|\ha| +|\hb| \leq \binom{n}{k}.
\end{align}
\end{lem}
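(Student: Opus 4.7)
The plan is to reduce to lex-initial segments via the Hilton form of Kruskal--Katona quoted earlier, and then bound $\binom{n}{k}-|\hb|$ from below by a double-counting argument powered by the Erd\H{o}s--Ko--Rado theorem. First, I would replace $\ha$ and $\hb$ by $\hl(n,k,|\ha|)$ and $\hl(n,k,|\hb|)$; this preserves the cross-intersecting property and the sizes. Since $|\ha|\leq|\hb|$, the new $\ha$ is contained in the new $\hb$, and because $\ha$ is cross-intersecting with a family that contains it, $\ha$ is itself intersecting.

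Next, the cross-intersecting condition yields $\bigcup_{A\in\ha}\binom{[n]\setminus A}{k}\subseteq \binom{[n]}{k}\setminus \hb$, so it suffices to prove
\[
\left|\bigcup_{A\in\ha}\binom{[n]\setminus A}{k}\right| \geq r|\ha|.
\]
I would establish this by double-counting the pairs $(A,B)$ with $A\in\ha$, $B\in\binom{[n]}{k}$, and $A\cap B=\emptyset$. One count gives $|\ha|\binom{n-k}{k}$, and the other gives $\sum_{B}|\ha\cap\binom{[n]\setminus B}{k}|$, with only $B$'s inside the union contributing. For each such $B$ the family $\ha\cap\binom{[n]\setminus B}{k}$ is an intersecting $k$-family on the $(n-k)$-element set $[n]\setminus B$.

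When $r\geq 2$, the hypothesis $n\geq (r+1)k$ forces $n-k\geq 2k$, so the Erd\H{o}s--Ko--Rado theorem gives $|\ha\cap\binom{[n]\setminus B}{k}|\leq \binom{n-k-1}{k-1}$, and the identity $\binom{n-k}{k}/\binom{n-k-1}{k-1}=(n-k)/k\geq r$ closes the inequality. The main subtlety I anticipate is the boundary case $r=1$, where $n-k$ can be smaller than $2k$ and EKR need not apply to the restriction; in that regime I would fall back on the trivial bound $|\ha\cap\binom{[n]\setminus B}{k}|\leq \binom{n-k}{k}$, which already yields $|\bigcup_{A\in\ha}\binom{[n]\setminus A}{k}|\geq |\ha|=r|\ha|$, exactly what is needed. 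Combining the two cases gives $r|\ha|+|\hb|\leq \binom{n}{k}$.
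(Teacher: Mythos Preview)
Your proof is correct and follows essentially the same route as the paper's: both reduce to lex-initial segments via Kruskal--Katona, observe that $\ha\subset\hb$ so $\ha$ is intersecting, and then double-count disjoint pairs $(A,B)$ with $A\in\ha$ to show that at least $r|\ha|$ many $k$-sets are excluded from $\hb$. The only cosmetic difference is that the paper first notes $1\in A$ for every $A\in\ha$, which makes the degree bound $\binom{n-k-1}{k-1}$ trivial and avoids your separate treatment of $r=1$, whereas you obtain that bound by applying EKR to $\ha\cap\binom{[n]\setminus B}{k}$.
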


\begin{proof}
By the Kruskal--Katona Theorem we may assume that $\ha,\hb$ are initial segments in the lexicographic order. Hence $\ha\subset \hb$ and $1\in A$ for all  $A\in \ha$. Define a bipartite graph on $X=\ha(1)$, $Y=\binom{[2,n]}{k}$ by putting an edge iff the two sets are disjoint. The degree of $A\setminus \{1\}$ for $A\in \ha$ is $\binom{n-k}{k}$. On the other hand for $B\in Y$ its degree is at most $\binom{n-k-1}{k-1} \leq \frac{1}{r}\binom{n-k}{k}$. It follows that
\[
|\ha| \binom{n-k}{k} \leq \frac{1}{r}\binom{n-k}{k} |N(\ha)|,
\]
where $N(\ha)$ is the set of  neighbors of $\ha$.
Thus the neighborhood of $\ha$ in $Y$ has size at least $r|\ha|$ and these $k$-sets are not in $\hb$.
\end{proof}

\begin{lem}[\cite{FW2024}]\label{lem-key}
Let $\ha\subset\binom{[n]}{a}$, $\hb\subset \binom{[n]}{b}$ be cross-intersecting families with $n\geq a+b$. If $|\ha|\geq \binom{n-1}{a-1}+\binom{n-2}{a-1}+\ldots+\binom{n-d}{a-1}$, $d<b$, then
\[
|\ha|+\frac{\binom{n-d}{a}}{\binom{n-d}{b-d}}|\hb| \leq \binom{n}{a}.
\]
\end{lem}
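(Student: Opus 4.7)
The plan is a two-step reduction to a LYM-type inequality for cross-intersecting families on a smaller ground set. First, using the cross-intersecting form of the Kruskal--Katona theorem recalled in the excerpt, I replace $\ha$ and $\hb$ by the initial lex segments $\hl(n,a,|\ha|)$ and $\hl(n,b,|\hb|)$. A Pascal telescoping rewrites the hypothesis as $|\ha|\geq \binom{n}{a}-\binom{n-d}{a}$, and the first $\binom{n}{a}-\binom{n-d}{a}$ sets of $\binom{[n]}{a}$ in lex order are precisely those meeting $[d]$; hence the new $\ha$ contains every $a$-set that meets $[d]$.

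Second, I claim $[d]\subseteq B$ for every $B\in\hb$. If some $i\in[d]\setminus B$ existed, then $n\geq a+b$ allows picking an $(a-1)$-subset $S\subseteq[n]\setminus(B\cup\{i\})$, whereupon $\{i\}\cup S$ lies in $\ha$ but is disjoint from $B$, contradicting cross-intersection. Writing $\ha^{*}:=\ha\cap\binom{[d+1,n]}{a}$ and $\hb^{*}:=\{B\setminus [d]:B\in\hb\}\subseteq\binom{[d+1,n]}{b-d}$, we obtain cross-intersecting families on the ground set $[d+1,n]$ satisfying $|\ha|=\bigl(\binom{n}{a}-\binom{n-d}{a}\bigr)+|\ha^{*}|$ and $|\hb|=|\hb^{*}|$.

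The final step is the standard LYM inequality for cross-intersecting pairs: if $\hc\subseteq\binom{[N]}{\alpha}$ and $\hd\subseteq\binom{[N]}{\beta}$ are cross-intersecting with $N\geq\alpha+\beta$, then $|\hc|/\binom{N}{\alpha}+|\hd|/\binom{N}{\beta}\leq 1$. The short proof is a random-permutation argument: for a uniformly random permutation $\pi$ of $[N]$, the events that $\{\pi(1),\dots,\pi(\alpha)\}\in\hc$ and that $\{\pi(N-\beta+1),\dots,\pi(N)\}\in\hd$ have the displayed probabilities but cannot co-occur, since the two $\pi$-sets involved are disjoint and hence would contradict cross-intersection. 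Applying this to $(\ha^{*},\hb^{*})$ on $[d+1,n]$ with $(\alpha,\beta)=(a,b-d)$, which is valid because $n\geq a+b$ rewrites as $n-d\geq a+(b-d)$, multiplying through by $\binom{n-d}{a}$, and substituting back for $|\ha|$ and $|\hb|$ yields exactly the stated bound. The only step requiring any care is the lex identification in the first paragraph; beyond that, no serious obstacle is expected.
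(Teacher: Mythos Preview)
Your argument is correct. The paper does not prove this lemma itself---it is quoted from \cite{FW2024}---so there is no in-paper proof to compare against; your route (Kruskal--Katona lex-segment reduction, forcing $[d]\subset B$, then the LYM/random-permutation inequality on $[d+1,n]$) is a standard and clean way to establish it, and every step checks out.
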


\begin{proof}[Proof of Theorem \ref{thm-main-2}]
Let $\hf\subset \binom{[n]}{k}$ be a saturated intersecting family  and let  $|\hf(1)|$, $|\hf(2)|$, $|\hf(3)|$, $|\hf(4)|$ be the largest four degrees. To prove the theorem it suffices to show that there exists $i\in [4]$ such that
\[
 f(i):= |\hf(i)|\leq \binom{n-2}{k-2}+\binom{n-4}{k-3}.
\]

Set $f(Q) = |\hf(Q,[4])|$ for $Q\subset [4]$. Then
\begin{align}\label{ineq-3.5}
f(i) = |\hf(i)|= \sum_{i\in Q\subset [4]} f(Q).
\end{align}

For the proof it is useful to break up \eqref{ineq-3.5} into two parts:
\begin{align*}
f_{small}(i) = f(\{i\}) + \sum_{Q\in \binom{[4]}{2},i\in Q} f(Q) \mbox { and }
f_{big}(i) =\sum_{i\in Q\subset [4],\ |Q|\geq 3} f(Q).
\end{align*}

\begin{claim}\label{claim-6.1}
If $f_{small}(i)\leq \binom{n-4}{k-2}$ then $f(i)\leq \binom{n-2}{k-2}+\binom{n-4}{k-3}$.
\end{claim}
\begin{proof}
Evidently, $f_{big}(i) \leq 3\binom{n-4}{k-3}+\binom{n-4}{k-4}$.
Hence,
\[
f(i)= f_{small}(i)+f_{big}(i)\leq \binom{n-2}{k-2}+\binom{n-4}{k-3}.
\]
\end{proof}
By Claim \ref{claim-6.1}, it suffices to show that for some $i\in [4]$,
\begin{align}\label{ineq-small}
f_{small}(i)\leq \binom{n-4}{k-2}.
\end{align}

Consider $P,P'\in \binom{[4]}{2}$ where $P=[4]\setminus P'$. Then
$\hf(P,[4])$ and $\hf(P',[4])$ are cross-intersecting. Let us make a graph $\hg$ on the vertex set $[4]$ by drawing an edge $P$ iff $f(P)\geq \binom{n-5}{k-3}+\binom{n-6}{k-3}$. By the Daykin Theorem there are no two disjoint edges. Hence either $\hg$ is a triangle or a star. We distinguish four cases according to the structure of $\hg$.

\vspace{3pt}
{\bf Case 1.} $\hg\neq \emptyset$ and there exists an isolated vertex in $\hg$.
\vspace{3pt}

Without loss of generality, let $\{1,2\}\in \hg$ and let 4 be isolated in $\hg$.
Since $\hf(\{4\},[4])$, $\hf(\{1,2\},[4])$ are cross-intersecting and $f(\{1,2\})\geq \binom{n-5}{k-3}+\binom{n-6}{k-3}$, by the Kruskal--Katona Theorem we infer that
\[
f(\{4\}) \leq \binom{n-6}{k-3}.
\]
Since $\hf(\{1,2\},[4])$, $\hf(\{3,4\},[4])$ are cross-intersecting and $f(\{1,2\})\geq \binom{n-5}{k-3}+\binom{n-6}{k-3}$, by the Kruskal--Katona Theorem
\[
f(\{3, 4\}) \leq \binom{n-6}{k-4}.
\]
Since 4 is isolated, by the definition of $\hg$ we have $f(\{j,4\})<\binom{n-5}{k-3}+\binom{n-6}{k-3}$ for $j=1,2$. Then for $(n-4)\geq 5(k-2)$,
\begin{align*}
f_{small}(4) = f(\{4\})+\sum_{1\leq j\leq 3} f(\{j,4\}) &< \binom{n-6}{k-3}+\binom{n-6}{k-4}+2\left(\binom{n-5}{k-3}+\binom{n-6}{k-3}\right)\\[3pt]
&<5\binom{n-5}{k-3}\\[3pt]
&\leq \binom{n-4}{k-2}.
\end{align*}

\vspace{3pt}
{\bf Case 2.} $\hg=\{\{1,2\},\{1,3\},\{1,4\}\}$.
\vspace{3pt}

By symmetry assume that $f(\{1,2\})\geq f(\{1,3\})\geq f(\{1,4\})$. Note that $n\geq 4k-3$ implies
\[
\frac{\binom{n-6}{k-2}}{\binom{n-6}{k-4}}\geq \frac{\binom{n-6}{k-2}}{\binom{n-6}{k-3}}=\frac{n-k-3}{k-2}\geq 3.
 \]
 Since $\hf(\{1,2\},[4])$, $\hf(\{4\},[4])$  are cross-intersecting and $|\hf(\{1,2\},[4])|\geq \binom{n-5}{k-3}+\binom{n-6}{k-3}$, by Lemma \ref{lem-key}   we get
\begin{align}\label{ineq-3.10}
 f(\{1,4\})+3f(\{4\})\leq f(\{1,2\})+\frac{\binom{n-6}{k-2}}{\binom{n-6}{k-3}}f(\{4\})\leq \binom{n-4}{k-2}.
\end{align}
Since  $\hf(\{1,2\},[4])$, $\hf(\{3,4\},[4])$ are cross-intersecting, by Lemma \ref{lem-key} we get
\begin{align}\label{ineq-3.11}
 f(\{1,4\})+3f(\{3,4\})\leq f(\{1,2\})+\frac{\binom{n-6}{k-2}}{\binom{n-6}{k-4}}f(\{3,4\}) \leq \binom{n-4}{k-2}.
\end{align}
Similarly,
\begin{align}\label{ineq-3.12}
 f(\{1,4\})+3f(\{2,4\})\leq f(\{1,3\})+3f(\{2,4\})\leq \binom{n-4}{k-2}.
\end{align}
Adding \eqref{ineq-3.10}, \eqref{ineq-3.11}, \eqref{ineq-3.12} and dividing by $3$, we obtain that
\[
f_{small}(4) = f(\{1,4\})+ f(\{4\})+f(\{2,4\})+f(\{3,4\})\leq \binom{n-4}{k-2}.
\]

\vspace{3pt}
{\bf Case 3.} $\hg=\emptyset$ and  $f(\{i\})\geq \binom{n-6}{k-3}+\binom{n-7}{k-3}$ for $i=1,2,3,4$.
\vspace{3pt}

Note that $\hf(\{i\},[4])$, $\hf(\{j\},[4])$ are cross-intersecting for $1\leq i<j\leq 4$. By the Daykin Theorem, one of $\hf(\{i\},[4])$ and $\hf(\{j\},[4])$ has size at most $\binom{n-5}{k-2}$. Without loss of generality assume $|\hf(\{4\},[4])|\leq \binom{n-5}{k-2}$.  Since $|\hf(\{i\},[4])|=f(\{i\})\geq \binom{n-6}{k-3}+\binom{n-7}{k-3}$ for all $i=1,2,3,4$, by Corollary  \ref{cor-hm},
\begin{align}\label{ineq-3.8}
|\hf(\{j,4\})|\leq  \binom{n-5}{k-3}+\binom{n-7}{k-4} \mbox{ for } j=1,2,3
\end{align}
and
\begin{align}\label{ineq-3.9}
|\hf([4]\setminus \{j\})|\leq  \binom{n-5}{k-4}+\binom{n-7}{k-5}\mbox{ for } j=1,2,3.
\end{align}
Adding \eqref{ineq-3.8}, \eqref{ineq-3.9} and $|\hf([4],[4])|\leq \binom{n-4}{k-4}$,
\begin{align*}
f(4) &=f(\{4\})+\sum_{1\leq j\leq 3}f(\{j,4\})+\sum_{1\leq j\leq 3}f([4]\setminus \{j\}) +f([4])\\[3pt]
&\leq \binom{n-5}{k-2}+3\left(\binom{n-5}{k-3}+\binom{n-7}{k-4}\right)+3\left(\binom{n-5}{k-4}+\binom{n-7}{k-5}\right) +\binom{n-4}{k-4}.
\end{align*}
Using $\binom{n-2}{k-2} = \binom{n-5}{k-2}+3\binom{n-5}{k-3}+3\binom{n-5}{k-4}+\binom{n-5}{k-5}$, $\binom{n-7}{k-4}+\binom{n-7}{k-5}=\binom{n-6}{k-4}$ and $n\geq  4k$,  we obtain that
\begin{align*}
f(4)&\leq   \binom{n-2}{k-2}+ 3\binom{n-6}{k-4}+\binom{n-4}{k-4}-\binom{n-5}{k-5}\\[3pt]
&\leq  \binom{n-2}{k-2}+ 4\binom{n-5}{k-4}\\[3pt]
&\leq  \binom{n-2}{k-2}+ \binom{n-4}{k-3}.
\end{align*}

\vspace{3pt}
{\bf Case 4. } $\hg=\emptyset$ and $f(\{i\})< \binom{n-6}{k-3}+\binom{n-7}{k-3}$ for some $i\in [4]$.
\vspace{3pt}

Without loss of generality assume that
\begin{align}\label{ineq-6.8}
f(\{4\})<\binom{n-6}{k-3}+\binom{n-7}{k-3}.
\end{align}
Note that $\hg=\emptyset$ implies
\begin{align}\label{ineq-6.7}
f(\{j,4\})\leq \binom{n-5}{k-3}+\binom{n-6}{k-3}.
\end{align}
 If $f(\{i\})\geq \binom{n-6}{k-3}+\binom{n-7}{k-3}$ for some $i\in [3]$, by Corollary \ref{cor-hm} we infer that
\begin{align}\label{ineq-6.9}
f(\{j,4\}) \leq \binom{n-5}{k-3}+\binom{n-7}{k-4}\mbox{ for }j\neq i.
\end{align}
Let
\[
I=\left\{i\in [3]\colon f(\{i\})\geq \binom{n-6}{k-3}+\binom{n-7}{k-3} \right\}.
\]

{\bf Subcase 4.1. } $|I|\geq 2$.

By \eqref{ineq-6.8}, \eqref{ineq-6.9} and $n\geq 5k$,
\begin{align*}
f_{small}(4) =\sum_{1\leq j\leq 3} f(\{j,4\}) + f(\{4\}) &\leq 3\left(\binom{n-5}{k-3}+\binom{n-7}{k-4}\right)+\binom{n-6}{k-3}+\binom{n-7}{k-3}\\[3pt]
&< 5\binom{n-5}{k-3} \leq \binom{n-4}{k-2}.
\end{align*}

{\bf Subcase 4.2. }$|I|=1$.

By \eqref{ineq-6.8}, \eqref{ineq-6.7}, \eqref{ineq-6.9} and  $n\geq 6k$,
\begin{align*}
f_{small}(4) &=\sum_{1\leq j\leq 3} f(\{j,4\}) + f(\{4\})\\[3pt]
 &\leq 2\left(\binom{n-5}{k-3}+\binom{n-7}{k-4}\right)+\binom{n-5}{k-3}+\binom{n-6}{k-3}+\binom{n-6}{k-3}+\binom{n-7}{k-3}\\[3pt]
&< 6\binom{n-5}{k-3} \leq \binom{n-4}{k-2}.
\end{align*}

{\bf Subcase 4.3. }$I=\emptyset$.

By the definition of $I$,
\begin{align}\label{ineq-6.10}
f(i) < \binom{n-6}{k-3}+\binom{n-7}{k-3}, \ i=1,2,3,4.
\end{align}
Let $\hg'$ be a graph on the vertex set $[4]$ with  $P\in \binom{[4]}{2}$ being an edge iff $f(P)>\binom{n-5}{k-3}$. By the Daykin Theorem there are no two disjoint edges.

If there is an isolated vertex of $\hg'$, without loss of generality assume 4 is isolated,   then $\{j,4\}\notin \hg'$ for $j=1,2,3$, implying $f(\{j,4\})\leq \binom{n-5}{k-3}$. By \eqref{ineq-6.10} and $n\geq 5k$,
\begin{align*}
f_{small}(4) &=\sum_{1\leq j\leq 3} f(\{j,4\}) + f(\{4\})\\[3pt]
 &\leq 3\binom{n-5}{k-3}+\binom{n-6}{k-3}+\binom{n-7}{k-3}\\[3pt]
&< 5\binom{n-5}{k-3} \leq \binom{n-4}{k-2}.
\end{align*}
Thus we may assume $\hg'=\{\{1,2\},\{1,3\},\{1,4\}\}$. Then $\{1,4\},\{2,4\}\notin \hg'$ implies $f(\{1,4\})\leq \binom{n-5}{k-3}$ and $f(\{2,4\})\leq \binom{n-5}{k-3}$. Thus by \eqref{ineq-6.7}, \eqref{ineq-6.10} and $n\geq 6k$
\begin{align*}
f_{small}(4) &=\sum_{1\leq j\leq 3} f(\{j,4\}) + f(\{4\})\\[3pt]
 &\leq 2\binom{n-5}{k-3}+\binom{n-5}{k-3}+\binom{n-6}{k-3}+\binom{n-6}{k-3}+\binom{n-7}{k-3}\\[3pt]
&< 6\binom{n-5}{k-3} < \binom{n-4}{k-2}.
\end{align*}
\end{proof}

\begin{proof}[Proof of Theorem \ref{thm-main-3}]
Let $\hf\subset \binom{[n]}{k}$ be a saturated intersecting family. Let  $|\hf(1)|$, $|\hf(2)|$, $\ldots,|\hf(\ell+1)|$ be the largest $\ell+1$ degrees. We are going to show that for some $i\in [\ell+1]$,
\[
f(i) := |\hf(i)|\leq \binom{n-2}{k-2}+\binom{n-\ell-1}{k-\ell}.
\]

 Set $f(Q) = |\hf(Q,[\ell+1])|$ for $Q\subset [\ell+1]$. Then
\begin{align}\label{ineq-thm3.5-1}
f(i)= \sum_{i\in Q\subset [\ell+1]} f(Q).
\end{align}
For $p\in [\ell+1]\setminus \{i\}$, we define
\begin{align*}
&f_1(i) = \sum_{i\in Q\subset [\ell+1],\ 1\leq |Q|\leq 2} f(Q),\\[3pt]
&f_2(i,p) = \sum_{i\in Q\subset [\ell+1],\ p\notin Q,\ 3\leq |Q|\leq \ell-1} f(Q),\\[3pt]
&f_3(i,p) = \sum_{i\in Q\subset [\ell+1],\ |Q|\geq \ell} f(Q)+\sum_{i\in Q\subset [\ell+1],\ p\in Q,\ 3\leq |Q|\leq \ell-1} f(Q).
\end{align*}
Clearly $f(i)=f_1(i)+f_2(i,p)+f_3(i,p)$ holds for every $p\in [\ell+1]\setminus \{i\}$.

\begin{claim}\label{claim-6.2}
If $f_1(i)\leq \frac{1}{2}\binom{n-\ell-1}{k-2}$ or $f_1(i)+f_2(i,p)\leq \binom{n-\ell-1}{k-2}$ for some $p\in [\ell+1]\setminus \{i\}$, then $f(i)\leq \binom{n-2}{k-2}+\binom{n-\ell-1}{k-\ell}$.
\end{claim}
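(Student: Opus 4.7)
The plan is a two-step decomposition. First, I derive a clean uniform upper bound on $f_3(i,p)$ via the Vandermonde identity $\binom{n-2}{k-2}=\sum_{s=0}^{\ell-1}\binom{\ell-1}{s}\binom{n-\ell-1}{k-2-s}$; second, I show that the first hypothesis is in fact a special case of the second under the standing assumption $n>2\ell^2k$ of Theorem~\ref{thm-main-3}.

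For the first step, I use the trivial bound $f(Q)\le\binom{n-\ell-1}{k-|Q|}$, valid since $\hf(Q,[\ell+1])$ lies inside $\binom{[n]\setminus[\ell+1]}{k-|Q|}$. The $Q$'s entering $f_3(i,p)$ come in three types: the $\ell$ sets of size $\ell$ containing $i$, the unique $Q=[\ell+1]$, and the $\binom{\ell-1}{m-2}$ sets of size $m\in[3,\ell-1]$ containing both $i$ and $p$. Reindexing the identity above with $s=m-2$ shows that the middle range contributes exactly $\binom{n-2}{k-2}-\binom{n-\ell-1}{k-2}-(\ell-1)\binom{n-\ell-1}{k-\ell}-\binom{n-\ell-1}{k-\ell-1}$, so after combining all three contributions
\[
f_3(i,p)\le \binom{n-2}{k-2}+\binom{n-\ell-1}{k-\ell}-\binom{n-\ell-1}{k-2},
\]
irrespective of $p$. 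Given the second hypothesis $f_1(i)+f_2(i,p)\le\binom{n-\ell-1}{k-2}$ for that particular $p$, simply adding yields the target $f(i)\le\binom{n-2}{k-2}+\binom{n-\ell-1}{k-\ell}$, with the $-\binom{n-\ell-1}{k-2}$ term cancelling.

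For the second step, the trivial bound again gives
\[
f_2(i,p)\le \sum_{m=3}^{\ell-1}\binom{\ell}{m-1}\binom{n-\ell-1}{k-m}.
\]
The assumption $n>2\ell^2k$ together with $\ell\le k$ yields $(k-2)/(n-\ell-k+2)<1/(2(\ell^2-1))$, so the leading term is at most $\binom{\ell}{2}/(2(\ell^2-1))\cdot\binom{n-\ell-1}{k-2}=\ell/(4(\ell+1))\cdot\binom{n-\ell-1}{k-2}$; meanwhile consecutive terms of the sum have ratio $(\ell-m+1)(k-m)/[m(n-\ell-k+m)]<1/\ell$, so summing the geometric tail gives $f_2(i,p)<\tfrac12\binom{n-\ell-1}{k-2}$, uniformly in $p$. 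Combined with the first hypothesis $f_1(i)\le\tfrac12\binom{n-\ell-1}{k-2}$, this forces $f_1(i)+f_2(i,p)\le\binom{n-\ell-1}{k-2}$ for every $p$, reducing to the case already handled.

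The main obstacle is calibrating the constant $1/2$ in the first hypothesis against the tail estimate for $f_2$: the choice $n>2\ell^2k$ in Theorem~\ref{thm-main-3} seems tuned precisely so that $\binom{\ell}{2}(k-2)/(n-\ell-k+2)$ sits comfortably below $1/2$, letting both the leading term and the geometric remainder fit under that threshold.
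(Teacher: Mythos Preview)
Your proof is correct and follows essentially the same route as the paper: both derive the identical bound $f_3(i,p)\le\binom{n-2}{k-2}+\binom{n-\ell-1}{k-\ell}-\binom{n-\ell-1}{k-2}$ via the trivial estimate $f(Q)\le\binom{n-\ell-1}{k-|Q|}$ and the Vandermonde identity, and both reduce the first hypothesis to the second by showing $f_2(i,p)<\tfrac12\binom{n-\ell-1}{k-2}$ through a geometric-tail argument on $\sum_m\binom{\ell-1}{m-1}\binom{n-\ell-1}{k-m}$ using $n>2\ell^2k$. Two minor remarks: your coefficient $\binom{\ell}{m-1}$ in the $f_2$ bound should be $\binom{\ell-1}{m-1}$ (since $p\notin Q$ leaves only $\ell-1$ elements to choose from), but the overcount is harmless for an upper bound; and ``exactly'' in your description of the middle-range contribution should read ``at most'', since you are summing upper bounds on the $f(Q)$, not their actual values.
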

\begin{proof}
Evidently,
\begin{align*}
f_3(i,p)&= \sum_{i\in Q\subset [\ell+1],\ |Q|\geq \ell} f(Q)+\sum_{i\in Q\subset [\ell+1],\ p\in Q,\ 3\leq |Q|\leq \ell-1} f(Q)\\[3pt]
 &\leq \ell \binom{n-\ell-1}{k-\ell}+\binom{n-\ell-1}{k-\ell-1}+\sum_{1\leq j\leq \ell-3}\binom{\ell-1}{j}\binom{n-\ell-1}{k-2-j}\\[3pt]
&=  \binom{n-\ell-1}{k-\ell}+\sum_{1\leq j\leq \ell-1}\binom{\ell-1}{j}\binom{n-\ell-1}{k-2-j}\\[3pt]
&=\binom{n-\ell-1}{k-\ell} +\binom{n-2}{k-2}-\binom{n-\ell-1}{k-2}.
\end{align*}
If $f_1(i)+f_2(i,p)\leq \binom{n-\ell-1}{k-2}$ for some $p\in [\ell+1]\setminus \{i\}$, then $f(i)\leq \binom{n-2}{k-2}+\binom{n-\ell-1}{k-\ell}$ follows.
If $f_1(i)\leq \frac{1}{2}\binom{n-\ell-1}{k-2}$, we are left to show that \begin{align}\label{ineq-6.11}
f_2(i,p) < \frac{1}{2}\binom{n-\ell-1}{k-2} \mbox{ for some }p\in [\ell+1]\setminus \{i\}.
\end{align}
Let us fix some $p\in [\ell+1]\setminus \{i\}$. Since
\begin{align*}
f_2(i,p) &= \sum_{i\in Q\subset [\ell+1],\ p\notin Q,\ 3\leq |Q|\leq \ell-1} f(Q)\leq \sum_{3\leq j\leq \ell-1} \binom{\ell-1}{j-1} \binom{n-\ell-1}{k-j}
\end{align*}
and $n\geq 2\ell k$ implies
\[
\frac{\binom{\ell-1}{j-1} \binom{n-\ell-1}{k-j}}{\binom{\ell-1}{j} \binom{n-\ell-1}{k-j-1}} = \frac{j}{\ell-j} \cdot \frac{n-k-\ell+j}{k-j}  > 2,
\]
it follows that for $n\geq 2\ell^2 k$,
\[
f_2(i,p) \leq 2\binom{\ell-1}{2} \binom{n-\ell-1}{k-3} <(\ell-1)(\ell-2) \binom{n-\ell-1}{k-3}< \frac{1}{2}\binom{n-\ell-1}{k-2}
\]
and \eqref{ineq-6.11} holds.
\end{proof}

By Claim \ref{claim-6.2}, it suffices to show that for some $i\in [\ell+1]$,
\begin{align}\label{ineq-thm3.5-small}
f_1(i)\leq \frac{1}{2}\binom{n-\ell-1}{k-2}
\end{align}
or for some $p\in [\ell+1]\setminus \{i\}$,
\begin{align}\label{ineq-thm3.5-small2}
f_1(i)+f_2(i,p)\leq \binom{n-\ell-1}{k-2}.
\end{align}

Consider $P,P'\in \binom{[\ell+1]}{2}$ with $P\cap P'=\emptyset$. Then
$\hf(P,[\ell+1])$ and $\hf(P',[\ell+1])$ are cross-intersecting. Let us make a graph $\hg$ on vertex set $[\ell+1]$, drawing an edge $P$ iff $f(P)\geq \binom{n-\ell-2}{k-3}+\binom{n-\ell-3}{k-3}$. By the Daykin Theorem there are no two disjoint edges. Hence either $\hg$ is a triangle or a star. We distinguish four cases according to the structure of $\hg$.

\vspace{3pt}
{\bf Case 1.} $\hg\neq \emptyset$ and there exists an isolated vertex in $\hg$.
\vspace{3pt}

Without loss of generality, let $\{1,2\}\in \hg$ and $\ell+1$ be isolated.
Since $\hf(\{\ell+1\},[\ell+1])$, $\hf(\{1,2\},[\ell+1])$ are cross-intersecting and $f(\{1,2\})\geq\binom{n-\ell-2}{k-3}+\binom{n-\ell-3}{k-3}$, by the Kruskal--Katona Theorem we have
\[
f(\{\ell+1\}) \leq \binom{n-\ell-3}{k-3}.
\]
Since $\ell+1$ is isolated, by the definition of $\hg$ we infer $f(\{j,\ell+1\})<\binom{n-\ell-2}{k-3}+\binom{n-\ell-3}{k-3}$ for $j=1,2,\ldots, \ell$. It follows that for $n\geq (4\ell+2)k$,
\begin{align*}
 f_1(\ell+1)&=f(\{\ell+1\})+\sum_{1\leq j\leq \ell} f(\{j,\ell+1\})\\[3pt]
  &< \binom{n-\ell-3}{k-3}+\ell\left(\binom{n-\ell-2}{k-3}+\binom{n-\ell-3}{k-3}\right)\\[3pt]
 &<(2\ell+1)\binom{n-\ell-2}{k-3}\\[3pt]
 &<  \frac{1}{2}\binom{n-\ell-1}{k-2},
\end{align*}
which proves \eqref{ineq-thm3.5-small}.
%Moreover, for $n\geq 2\ell k$
%\begin{align*}
%&\sum_{\ell+1\in Q\subset [\ell+1],\ 1\notin Q,\ 3\leq |Q|\leq \ell-1} f(Q) \\[3pt]
%= &\sum_{\ell+1\in Q\subset [3,\ell+1],\ 3\leq |Q|\leq \ell-1} f(Q)+ \sum_{\{2,\ell+1\}\subset Q\subset [2,\ell+1],\ 3\leq |Q|\leq \ell-1} f(Q)\\[3pt]
%\leq & \sum_{3\leq j\leq \ell-1}  \binom{\ell-2}{j-1}\binom{n-\ell-3}{k-j-2}+\sum_{3\leq j\leq \ell-1} \binom{\ell-2}{j-2}\binom{n-\ell-1}{k-j}\\[3pt]
%=& \sum_{3\leq j\leq \ell-1}  \binom{\ell-2}{j-1}\binom{n-\ell-3}{k-2-j}+\sum_{1\leq j\leq \ell-3} \binom{\ell-2}{j}\binom{n-\ell-1}{k-2-j}\\[3pt]
%<& 2\binom{n-\ell-3}{k-3}+2(\ell-2)\binom{n-\ell-1}{k-3}.
%\end{align*}
%Thus for $n\geq 4\ell k$,
%\[
%f_{small}(i)\leq (2\ell+3)\binom{n-\ell-2}{k-3}+2(\ell-2)\binom{n-\ell-1}{k-3}< \binom{n-\ell-1}{k-2}.
%\]

\vspace{3pt}
{\bf Case 2.} $\hg$ is a star of size $\ell$.
\vspace{3pt}

Without loss of generality, assume that $\hg=\{\{1,2\},\{1,3\},\ldots,\{1,\ell+1\}\}$ and
$f(\{1,2\})\geq f(\{1,3\})\geq \ldots \geq f(\{1,\ell+1\})$.
Note the $n\geq (2\ell+1)k$ implies
\[
\frac{\binom{n-\ell-3}{k-2}}{\binom{n-\ell-3}{k-4}}\geq \frac{\binom{n-\ell-3}{k-2}}{\binom{n-\ell-3}{k-3}} = \frac{n-\ell-k}{k-2}> 2\ell.
\]
Since $\hf(\{1,2\},[\ell+1])$, $\hf(\{\ell+1\},[\ell+1])$ are cross-intersecting and $|\hf(\{1,2\},[\ell+1])|\geq \binom{n-\ell-2}{k-3}+\binom{n-\ell-3}{k-3}$, by Lemma \ref{lem-key} we get
\begin{align}\label{ineq-thm3.5-3.10}
 f(\{1,\ell+1\})+2\ell f(\{\ell+1\})\leq f(\{1,2\})+\frac{\binom{n-\ell-3}{k-2}}{\binom{n-\ell-3}{k-3}}f(\{\ell+1\}) \leq \binom{n-\ell-1}{k-2}.
\end{align}
Since $\hf(\{1,2\},[\ell+1])$ and $\hf(\{3,\ell+1\},[\ell+1])$ are cross-intersecting, by Lemma \ref{lem-key} we get
\begin{align*}
 f(\{1,\ell+1\})+2\ell f(\{3,\ell+1\})\leq f(\{1,2\})+\frac{\binom{n-\ell-3}{k-2}}{\binom{n-\ell-3}{k-4}}f(\{3,\ell+1\}) \leq \binom{n-\ell-1}{k-2}.
\end{align*}
Similarly,
\begin{align*}
 f(\{1,\ell+1\})+2\ell f(\{2,\ell+1\})\leq f(\{1,3\})+2\ell f(\{2,\ell+1\})\leq \binom{n-\ell-1}{k-2}.
\end{align*}
In general, we have
\begin{align}\label{ineq-thm3.5-3.13}
  f(\{1,\ell+1\})+2\ell f(\{i,\ell+1\})\leq \binom{n-\ell-1}{k-2},\ i=2,3,\ldots,\ell.
\end{align}
Adding  \eqref{ineq-thm3.5-3.10}, \eqref{ineq-thm3.5-3.13} over $i=2,3,\ldots,\ell$ and dividing $2\ell$, we obtain that
\begin{align}\label{ineq-thm3.5-3.14}
f_1(\ell+1)=f(\{\ell+1\})+\sum_{1\leq i\leq \ell}f(\{i,\ell+1\}) \leq \frac{1}{2}\binom{n-\ell-1}{k-2}+\frac{1}{2} f(\{1,\ell+1\}).
\end{align}
For any $Q\subset [2,\ell+1]$ with $3\leq |Q|\leq \ell-1$, there exists some $x\in [2,\ell+1] \setminus Q$. Note that
$n\geq 2(\ell+1)k$ implies
\begin{align*}
\frac{\binom{n-\ell-3}{k-2}}{\binom{n-\ell-3}{k-|Q|-2}} &= \frac{(n-\ell-k+|Q|-1)\ldots (n-\ell-k-2)}{(k-2)\ldots (k-|Q|-1)}\\[3pt]
&>\left(\frac{n-\ell-k+|Q|-1}{k-2}\right)^{|Q|}\\[3pt]
&\geq (2\ell)^{|Q|}>2^{|Q|-1}\binom{\ell-1}{|Q|-1}.
\end{align*}
Since $\hf(\{1,x\},[\ell+1])$ and $\hf(Q,[\ell+1])$ are cross-intersecting, by Lemma \ref{lem-key} we get
\[
 f(\{1,\ell+1\})+2^{|Q|-1}\binom{\ell-1}{|Q|-1}f(Q)\leq f(\{1,x\})+\frac{\binom{n-\ell-3}{k-2}}{\binom{n-\ell-3}{k-|Q|-2}}f(Q)\leq \binom{n-\ell-1}{k-2}.
\]
It follows that
\[
f(Q)\leq  2^{-|Q|+1}\binom{\ell-1}{|Q|-1}^{-1}\left(\binom{n-\ell-1}{k-2}-f(\{1,\ell+1\})\right).
\]
Then
\begin{align}\label{ineq-thm3.5-3.15}
f_2(\ell+1,1)&=\sum_{i\in Q\subset [\ell+1],\ 1\notin Q,\ 3\leq |Q|\leq \ell-1} f(Q)\nonumber\\[3pt]
&=\sum_{i\in Q\subset [\ell+1],\ 1\notin Q,\ 3\leq |Q|\leq \ell-1} 2^{-|Q|+1}\binom{\ell-1}{|Q|-1}^{-1}\left(\binom{n-\ell-1}{k-2}-f(\{1,\ell+1\})\right) \nonumber\\[3pt]
&= \left(\binom{n-\ell-1}{k-2}-f(\{1,\ell+1\})\right) \sum_{3\leq j\leq \ell-1} 2^{-j+1}\nonumber\\[3pt]
&< \frac{1}{2}\binom{n-\ell-1}{k-2}-\frac{1}{2}f(\{1,\ell+1\}).
\end{align}
Adding \eqref{ineq-thm3.5-3.14} and \eqref{ineq-thm3.5-3.15}, we conclude that
\[
f_1(\ell+1)+f_2(\ell+1,1)<\binom{n-\ell-1}{k-2},
\]
proving \eqref{ineq-thm3.5-small2}.

\vspace{3pt}
{\bf Case 3.} $\hg=\emptyset$ and  $f(\{i\})\geq (\ell-1)\binom{n-\ell-3}{k-3}$ for $i=1,2,\ldots,\ell+1$.
\vspace{3pt}

Note that $\hf(\{i\},[\ell+1])$, $\hf(\{j\},[\ell+1])$ are cross-intersecting for $1\leq i<j\leq \ell+1$. By the Daykin Theorem, one of $\hf(\{i\},[\ell+1])$ and $\hf(\{j\},[\ell+1])$ has size at most $\binom{n-\ell-2}{k-2}$. Without loss of generality assume
\begin{align}\label{ineq-thm3.5-2}
|\hf(\{\ell+1\},[\ell+1])|\leq \binom{n-\ell-2}{k-2}.
\end{align}
Note that for any $Q\subsetneq [\ell+1]$ and $i\in [\ell+1]\setminus Q$, $\hf(\{i\},[\ell+1])$, $\hf(Q,[\ell+1])$ are cross-intersecting. Since $|\hf(\{i\},[\ell+1])|\geq (\ell-1)\binom{n-\ell-3}{k-3}$ for all $i=1,2,\ldots,\ell+1$, by Corollary  \ref{cor-hm}
\begin{align*}
|f(Q)|\leq  \binom{n-\ell-2}{k-|Q|-1}+\binom{n-2\ell-1}{k-|Q|-\ell+1}.
\end{align*}
It follows that
\begin{align}\label{ineq-thm3.5-3.8}
\sum_{\ell+1\in Q\subset [\ell+1],\ |Q|\geq 2} f(Q) \leq \sum_{2\leq j\leq \ell} \binom{\ell}{j-1} \left( \binom{n-\ell-2}{k-j-1}+\binom{n-2\ell-1}{k-j-\ell+1}\right)
\end{align}
Adding \eqref{ineq-thm3.5-2}, \eqref{ineq-thm3.5-3.8}  and $|\hf([\ell+1],[\ell+1])|\leq \binom{n-\ell-1}{k-\ell-1}$,  for $n\geq  4k$ we obtain that
\begin{align*}
f(\ell+1) &\leq \binom{n-\ell-2}{k-2}+\sum_{2\leq j\leq \ell} \binom{\ell}{j-1} \left( \binom{n-\ell-2}{k-j-1}+\binom{n-2\ell-1}{k-j-\ell+1}\right)+\binom{n-\ell-1}{k-\ell-1}.
\end{align*}
Using $\binom{n-2}{k-2} =\sum_{1\leq j\leq \ell+1} \binom{\ell}{j-1} \binom{n-\ell-2}{k-j-1}$, it follows that
\begin{align*}
f(\ell+1)&\leq   \binom{n-2}{k-2}+ \sum_{2\leq j\leq \ell} \binom{\ell}{j-1}\binom{n-2\ell-1}{k-j-\ell+1}+ \binom{n-\ell-2}{k-\ell-1}.
\end{align*}
Since for $2\leq j\leq \ell-1$ and $n\geq 2(\ell+1)k$
\[
\frac{\binom{\ell}{j}\binom{n-2\ell-1}{k-j-\ell}}{\binom{\ell}{j-1}\binom{n-2\ell-1}{k-j-\ell+1}} =\frac{(\ell-j+1)(k-j-\ell+1)}{j(n-\ell-k+j-1)} <\frac{\ell(k-\ell)}{n-\ell-k}< \frac{1}{2},
\]
we have
\[
\sum_{2\leq j\leq \ell} \binom{\ell}{j-1}\binom{n-2\ell-1}{k-j-\ell+1} <2 \ell\binom{n-2\ell-1}{k-\ell-1}<2 \ell\binom{n-\ell-2}{k-\ell-1}.
\]
Thus for $n\geq (2\ell+1)k$,
\begin{align*}
f(\ell+1)< \binom{n-2}{k-2}+ (2\ell+1)\binom{n-\ell-2}{k-\ell-1}<  \binom{n-2}{k-2}+ \binom{n-\ell-1}{k-\ell}
\end{align*}
and the theorem follows.

\vspace{3pt}
{\bf Case 4. } $\hg=\emptyset$ and $f(\{i\})< (\ell-1)\binom{n-\ell-3}{k-3}$ for some $i\in [\ell+1]$.
\vspace{3pt}

Without loss of generality assume that $f(\{\ell+1\})<(\ell-1)\binom{n-\ell-3}{k-3}$. By $\hg=\emptyset$, we infer that
\[
f(\{j,\ell+1\}) \leq \binom{n-\ell-2}{k-3}+\binom{n-\ell-3}{k-3},\ j=1,2,\ldots,\ell+1.
\]
Then for $n\geq 6\ell k$ we obtain that
\[
f_1(\ell+1) =\sum_{1\leq j\leq \ell} f(\{j,\ell+1\}) + f(\{\ell+1\}) < 3\ell\binom{n-\ell-2}{k-3} \leq \frac{1}{2}\binom{n-\ell-1}{k-2},
\]
proving \eqref{ineq-thm3.5-small}.
\end{proof}

\section{Concluding remarks}

In this paper, we mainly considered the maximum $i$th degree of an intersecting family for $2\leq i\leq  k+1$ and $i=2k+1$.

One of the main results states that every intersecting $k$-graph on $n$ vertices has at least $n-2k$ vertices of degree at most $\binom{n-2}{k-2}$ for $n\geq 6k$. Recall that the Huang-Zhao Theorem showed that  every intersecting $k$-graph on $n$ vertices has a vertex of degree at most $\binom{n-2}{k-2}$ for $n>2k$. An intriguing problem is whether our result holds for the full range.

\begin{conj}
Let $\hf\subset \binom{[n]}{k}$ be an intersecting family with $n>2k$. Then there are at least $n-2k$ vertices of degree at most  $\binom{n-2}{k-2}$.
\end{conj}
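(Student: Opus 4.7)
My plan extends Theorem \ref{thm-main-4} to the full range $n > 2k$ using tools already developed in the paper, with a structural argument reserved for one boundary value. By Lemma \ref{lem-4.4}, either $d_{2k+1}(\hf) \leq \binom{n-2}{k-2}$ (and we are done) or $|\hf| \leq |\hh_3| = \binom{n-2}{k-2} + 2\binom{n-3}{k-2}$. In the latter case, if we also assume for contradiction that $d_{2k+1}(\hf) > \binom{n-2}{k-2}$, then summing over all vertex degrees gives
\[
(2k+1)\binom{n-2}{k-2} < \sum_{i=1}^{n} d_i = k|\hf| \leq k\binom{n-2}{k-2} + 2k\binom{n-3}{k-2},
\]
which after simplification becomes $(k+1)(n-2) < 2k(n-k-1)$, i.e.\ $n > 2k+2$. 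Hence the conjecture holds for every $n \geq 2k+3$. The case $n = 2k+1$ is also immediate: there $d_{2k+1}(\hf) = d_n(\hf)$, and $d_n(\hf) \leq \binom{n-2}{k-2}$ is exactly the Huang--Zhao theorem (Theorem \ref{thm-hz}).

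The sole outstanding value is therefore $n = 2k+2$, where the conjecture amounts to showing that at least two vertices have degree at most $\binom{n-2}{k-2}$, equivalently $d_{n-1}(\hf) \leq \binom{n-2}{k-2}$. For this boundary case I propose a structural argument. By the reductions above, we may assume $|\hf| \leq |\hh_3|$, the Huang--Zhao vertex $u$ satisfies $d_u \leq \binom{n-2}{k-2}$, and all other $2k+1$ vertices strictly exceed this threshold. The link $\hf(\bar{u}) \subset \binom{[n]\setminus\{u\}}{k}$ lives on exactly $n-1 = 2k+1$ vertices; the complement map $F \mapsto ([n]\setminus\{u\})\setminus F$ identifies intersecting $k$-uniform families on a $(2k+1)$-set with $2$-intersecting $(k+1)$-uniform families on the same ground set. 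Applying the Ahlswede--Khachatrian complete intersection theorem to this complement pins down $|\hf(\bar{u})|$ and its vertex-wise structure, and a subsequent incidence count, combined with the contribution of the edges of $\hf$ through $u$, should produce the required second low-degree vertex and a contradiction with our hypothesis.

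The hardest step is converting a set-level constraint on the $(k+1)$-uniform $2$-intersecting complement into a pointwise degree bound on the original $k$-uniform family $\hf$: the bijection is clean, but degrees translate in a tangled way (non-membership in a complement-edge corresponds to membership in an original edge), and one must additionally fold in the contribution of edges of $\hf$ through $u$. An alternative route, which avoids the complement entirely, is to refine the Huang--Zhao spectral argument so that the two smallest eigenvalues of their operator both upper-bound degrees --- yielding $d_{n-1}(\hf) \leq \binom{n-2}{k-2}$ directly --- but making this quantitatively sharp at $n = 2k+2$ appears equally delicate and is where I expect the real technical work to lie.
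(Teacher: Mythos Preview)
This statement is a \emph{conjecture} in the paper --- it is explicitly left open, with the authors writing that ``an intriguing problem is whether our result holds for the full range.'' There is no proof in the paper to compare against.

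Your proposed argument for $n \geq 2k+3$ contains an arithmetic slip that is fatal to the approach. The ratio $\binom{n-3}{k-2}/\binom{n-2}{k-2}$ equals $\frac{n-k}{n-2}$, not $\frac{n-k-1}{n-2}$. Consequently, from
\[
(2k+1)\binom{n-2}{k-2} < k|\hf| \leq k\binom{n-2}{k-2} + 2k\binom{n-3}{k-2}
\]
one correctly obtains $(k+1)(n-2) < 2k(n-k)$, not $(k+1)(n-2) < 2k(n-k-1)$. Setting $m = n - 2k > 0$, this simplifies to $(k-1)m > -2$, which holds for \emph{every} $n > 2k$ and $k \geq 2$. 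In other words, the degree-sum count against $|\hf| \leq |\hh_3|$ yields no contradiction whatsoever; it is simply too weak to force $d_{2k+1}(\hf) \leq \binom{n-2}{k-2}$ anywhere in the range $2k < n < 6k-9$. This is consistent with the paper: the same counting idea is exactly what drives Theorem~\ref{thm-main-6}, and there it only reaches $d_{\lceil 8k/3\rceil}(\hf) \leq \binom{n-2}{k-2}$, with $\lceil 8k/3\rceil > 2k+1$ for all $k \geq 2$.

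Your reduction of the case $n = 2k+1$ to the Huang--Zhao theorem is correct, but since the main counting step collapses, the conjecture remains open on the entire interval $2k+1 < n < 6k-9$, not merely at the single value $n = 2k+2$. The structural and spectral ideas you sketch for $n=2k+2$ would, even if successful, leave the bulk of the problem untouched.
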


In Theorem \ref{thm-main-5} we proved that every $t$-intersecting $k$-graph on $n$ vertices has at least $n-k-1$ vertices of degree at most $\binom{n-t-1}{k-t-1}$ for $n\geq \binom{t+2}{2}k^2$. Let us close this paper by a stronger conjecture.

\begin{conj}
Let $\hf\subset \binom{[n]}{k}$ be a $t$-intersecting family. Then for
some absolute constant $c$ and $n\geq ckt$, $d_{k+2}(\hf)\leq \binom{n-t-1}{k-t-1}$.
\end{conj}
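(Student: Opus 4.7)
The plan is to follow the case structure of the proof of Theorem~\ref{thm-main-5} but to sharpen the bounds in each case, aiming for thresholds linear in both $k$ and $t$ rather than quadratic. As there, one reduces to a saturated $t$-intersecting $\hf$ and examines the basis $\hb=\hb_t(\hf)$ together with $\tau_t(\hf)$. When $\tau_t\leq t$ the claim is immediate by saturation. When $\tau_t=t+1$, split on the structure of $\hb_0=\{B\in \hb:|B|=t+1\}$; the non-sunflower Case 1 of Theorem~\ref{thm-main-5} already yields the result at $n\geq (t+2)k\leq 3kt$, so nothing new is needed there. The work concentrates in two places: strengthening Proposition~\ref{prop-new5.1} (for $\tau_t\geq t+2$) and handling the sunflower subcases of $\tau_t=t+1$.

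To strengthen Proposition~\ref{prop-new5.1}, the main inefficiency to remove is the factor $k^{r-1-t}$ coming from the greedy sequence construction. I would instead select two minimal transversals $T_1,T_2\in \hb$ with $|T_1\cap T_2|$ as small as possible (the minimality of $\hb$ and $\tau_t\geq t+2$ should force such a pair to exist with $|T_1\cap T_2|$ close to $t$), and exploit the two simultaneous constraints $|F\cap T_1|\geq t$ and $|F\cap T_2|\geq t$ for every $F\in \hf(y)$ with $y\notin T_1\cup T_2$. This yields an estimate of the shape $|\hf(y)|\leq \sum_{S_1,S_2}\binom{n-|T_1\cup T_2|}{k-|S_1\cup S_2|}$, where $S_i$ ranges over $t$-subsets of $T_i$; since $|S_1\cup S_2|\geq t+1$, the $k^{r-1-t}$ factor is replaced by a polynomial in $t$, which should push the threshold down to $n\geq c_1tk$.

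For $\tau_t=t+1$ with $\hb_0$ a sunflower $\{[t]\cup\{t+i\}:1\leq i\leq s\}$, I would replace the union bound $(k-t+1)\binom{n-t-2}{k-t-2}$ by the exact inclusion--exclusion count $\binom{n-t-1}{k-t-1}-\binom{n-k-1}{k-t-1}$ of $k$-sets containing $[t]\cup\{x\}$ and meeting $F_0\setminus[t]$; when $s\geq 2$ the negative term should dominate the Case B contribution $t\binom{n-t-s-1}{k-t-s}$ already for $n\geq c_2kt$. For the genuinely hard $s=1$ subcase, where $\hb_0=\{[t+1]\}$ and $\hf$ is a mild perturbation of the full $t$-star, I would try to show that every vertex $x\in[t+2,n]$ with $|\hf(x)|>\binom{n-t-1}{k-t-1}$ must lie in the union of the $t+1$ witnesses $F_T$, $T\in\binom{[t+1]}{t}$; since $|\bigcup_T F_T|\leq (t+1)(k-t+1)$ this union-based count alone bounds the number of high-degree vertices linearly in $kt$, which is what the conjecture demands (up to adjusting the absolute constant).

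\textbf{Main obstacle.} The hardest step is precisely this $s=1$ subcase. Here $\hf$ is structurally very close to a full $t$-star, and any union bound over the witnesses $\{F_T\}$ seems to unavoidably lose a factor of order $k$, forcing $n$ up to order $k^2$. Beating this while keeping $c$ an absolute constant appears to require a genuinely new averaging/stability argument showing that the mere existence of the $F_T$'s already pins down the high-degree vertices to a set of size linear in $kt$---morally a near-star strengthening of the Huang--Zhao Theorem (Theorem~\ref{thm-hz}). Establishing such a near-star stability, and likewise choosing the pair $(T_1,T_2)$ in Step~2 with provably small intersection, are the two places where I expect the proof to need a new idea beyond the techniques used earlier in the paper.
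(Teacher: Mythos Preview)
The statement you are addressing is a \emph{conjecture} in the paper, not a theorem; the authors offer no proof and explicitly leave it open in the concluding remarks. So there is nothing in the paper to compare your attempt against. Your submission is likewise not a proof: it is a research plan that follows the case split of Theorem~\ref{thm-main-5} and, in your own words, ends by identifying two places ``where I expect the proof to need a new idea beyond the techniques used earlier in the paper.'' A proposal that flags its own missing ingredients cannot be assessed as a correct proof.

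Beyond that honest self-assessment, one concrete concern with your outline for the $\tau_t\geq t+2$ case: switching from one minimal transversal $T_1$ (as in Proposition~\ref{prop-new5.1}) to a pair $T_1,T_2$ may well shave the $k^{r-1-t}$ factor, but the degree bound you derive only applies to $y\notin T_1\cup T_2$, and $|T_1\cup T_2|$ can be as large as $2r-t$; for $r$ close to $k$ this yields control only on $d_{2k-t+1}$, not on $d_{k+2}$. The paper's single-transversal argument already gives the correct index $r+1\leq k+1$ and fails only on the $n$-threshold, so your two-transversal idea trades a gain in the threshold for a loss in the index that you would still have to repair. As you yourself note, the $s=1$ sunflower subcase (where $\hf$ is a mild perturbation of a full $t$-star) is the genuine bottleneck, and nothing in your sketch overcomes it; the proposed ``high-degree vertices lie in $\bigcup_T F_T$'' heuristic is not supported by any argument, and until that step is made rigorous the conjecture remains open.
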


\vspace{5pt}
{\noindent \bf Acknowledgement.} The second author was supported by
National Natural Science Foundation of China Grant no. 12471316 and Natural Science Foundation of Shanxi Province Grant no. RD2500002993.

\end{document}